\tikzset{shorten <>/.style={shorten >=#1,shorten <=#1}}
\newcounter{nodemaker}
\tikzset{%
    symbol/.style={%
        draw=none,
        every to/.append style={%
            edge node={node [sloped, allow upside down, auto=false]{$#1$}}}
    }
}
\newcommand{\bigdoublevee}{\big@doubleop{\bigvee}}
\newcommand{\bigdoublewedge}{\big@doubleop{\bigwedge}}
\newcommand{\big@doubleop}[1]{%
  \DOTSB\mathop{\mathpalette\big@doubleop@aux{#1}}\slimits@
}
\tikzset{Rightarrow/.style={double equal sign distance,>={Implies},->},
triple/.style={-,preaction={draw,Rightarrow}},
quadruple/.style={preaction={draw,Rightarrow,shorten >=0pt},shorten >=1pt,-,double,double
distance=0.2pt}}
\newcommand\big@doubleop@aux[2]{%
  \sbox\z@{$\m@th#1#2$}%
  \makebox[1.35\wd\z@][s]{$\m@th#1#2\hss#2$}%
}
\newcommand*{\doublerightarrow}[2]{\mathrel{
  \settowidth{\@tempdima}{$\scriptstyle#1$}
  \settowidth{\@tempdimb}{$\scriptstyle#2$}
  \ifdim\@tempdimb>\@tempdima \@tempdima=\@tempdimb\fi
  \mathop{\vcenter{
    \offinterlineskip\ialign{\hbox to\dimexpr\@tempdima+1em{##}\cr
    \rightarrowfill\cr\noalign{\kern.5ex}
    \rightarrowfill\cr}}}\limits^{\!#1}_{\!#2}}}
\newcommand*{\triplerightarrow}[1]{\mathrel{
  \settowidth{\@tempdima}{$\scriptstyle#1$}
  \mathop{\vcenter{
    \offinterlineskip\ialign{\hbox to\dimexpr\@tempdima+1em{##}\cr
    \rightarrowfill\cr\noalign{\kern.5ex}
    \rightarrowfill\cr\noalign{\kern.5ex}
    \rightarrowfill\cr}}}\limits^{\!#1}}}
\DeclareFontFamily{U}{min}{}
\DeclareFontShape{U}{min}{m}{n}{<-> udmj30}{}
\newtheorem{theorem}{Theorem}[section]
\theoremstyle{proposition}
\newtheorem{proposition}[theorem]{Proposition}
\newtheorem{corollary}[theorem]{Corollary}
\newtheorem{corollary'}[theorem]{Corollary}
\newtheorem{lemma}[theorem]{Lemma}
\theoremstyle{definition}
\newtheorem{definition}[theorem]{Definition}
\theoremstyle{remark}
\newtheorem*{remark}{Remark}
\newcommand{\Spec}
 {{\bf Spec}}
 \newcommand{\Alg}
 {{\rm Alg}}
 \newcommand{\Lex}
 {{\bf Lex}}
 \newcommand{\LFP}
 {{\bf LFP}}
 \newcommand{\Et}
 {{\bf Et}}
\newcommand{\Loc}
 {{\bf Loc}}
  \newcommand{\Diag}
 {{\bf Diag}}
 \newcommand{\GTop}
 {{\bf GTop}}
 \newcommand{\Geom}
 {{\bf Geom}}
 \newcommand{\Cat}
 {{\bf Cat}}
\newcommand{\Fib}
 {{\bf Fib}}
\newcommand{\DFib}
 {{\bf DFib}}
\newcommand{\DopFib}
 {{\bf DopFib}}
 \newcommand{\opFib}
 {{\bf opFib}}
\newcommand{\cod}
 {{\rm cod}}
\newcommand{\comp}
 {\circ}
\newcommand{\Cont}
 {{\bf Cont}}
\newcommand{\dom}
 {{\rm dom}}
\newcommand{\fp}
{{\rm fp }}
\newcommand{\li}
{{\textup{lim } }}
\newcommand{\oplaxlim}
{{\textup{oplaxlim } }}
\newcommand{\oplaxcolim}
{{\textup{oplaxcolim } }}
\newcommand{\lan}
{{\textup{lan } }}
\newcommand{\ran}
{{\textup{ran } }}
\newcommand{\colim}
{{\textup{colim}}}
\newcommand{\coeq}
{{\textup{coeq}}}
\newcommand{\eq}
{{\textup{eq}}}
\newcommand{\comma}[2]
{\mbox{$(#1\!\downarrow\!#2)$}}
\newcommand{\empstg}
 {[\,]}
\newcommand{\epi}
 {\twoheadrightarrow}
\newcommand{\hy}
 {\mbox{-}}
\newcommand{\im}
 {{\rm im}}
\newcommand{\imp}
 {\!\Rightarrow\!}
\newcommand{\Ind}
 {{\rm Ind}}
 \newcommand{\Pro}
 {{\rm Pro}}
\newcommand{\mono}
 {\rightarrowtail}
\newcommand{\ob}
 {{\rm ob}}
 \newcommand{\Hom}
 {{\rm Hom}}
\newcommand{\op}
 {^{\rm op}}
 \newcommand{\pt}
 {{\bf pt}}
\newcommand{\Set}
 {{\bf Set }}
\newcommand{\Sh}
 {{\bf Sh}}
\newcommand{\sh}
 {{\bf sh}}
\newcommand{\Sub}
 {{\rm Sub}}
\title{On coslices and commas of locally finitely presentable categories}
\author{Axel Osmond}
\date{March 2021}
\begin{document}

\maketitle

\begin{abstract}
    We give an explicit description of the generator of finitely presented objects of the coslice of a locally finitely presentable category under a given object, as consisting of all pushouts of finitely presented maps under this object. Then we prove that the comma category under the direct image part of a morphism of locally finitely presentable category is still locally finitely presentable, and we give again an explicit description of its generator of finitely presented objects. We finally deduce that 2-category $\LFP$ has comma objects computed in $\Cat$. 
\end{abstract}

\section*{Introduction}

In this note we investigate in detail the comma and coslice construction in the context of locally finitely presentable categories.\\

It is well known since \cite{adamek1994locally} that coslices of locally finitely presentable categories are again locally finitely presentable categories; however the proof involves an abstract characterization from which one cannot retrieve an explicit generator of finitely presented objects. However, in the context of the \emph{spectral construction} arose a similar problem of characterising finitely presentable left maps under a given object - which are used to construct the \emph{spectral site} associated to this object - and this led to ask in particular what could be a good choice of generator in a coslice. After discussing this topics at several occasion with members of the category theory community, it appeared that we were still lacking a reference containing the explicit result, though elements of answer were already present in two previous works, \cite{Anel} and \cite{Coste}, and this kind of result might have been folklore for a few connoisseurs. We though it could be useful to provide an answer as detailed and concrete as possible to fix this lack of reference, which is the purpose of first half of this work. \\

The second half of this paper is devoted to the construction of comma categories in the 2-category $\LFP$ of locally finitely presentable categories. Concretely, we prove that if $ F : \mathcal{A} \rightarrow \mathcal{B}$ is a morphism of locally finitely presentable categories, then the comma $ F_*\downarrow \mathcal{B}$ is locally finitely presentable and define together with its canonical projections a comma object in $\LFP$. It seems that until now the only elements of answer on this topic was \cite{makkai1988strong}[Proposition 6.1.1] and a subsequent adaptation in \cite{adamek1994locally}, which however was a weaker statement only proving accessibility of comma object without any control of the rank of accessibility. We improve this result by proving that in the situation above the comma construction needs not raising the rank of accessibility, thanks to a characterization of finitely presented objects of the comma. Moreover, we prove that this comma object calculated in $\Cat$ actually inherits the universal property of the comma object in $\LFP$. \\


Generalities about coslices and few technicalities are gathered in the first part of this work. 

\section*{Acknowledgements}

The author would like to express special thanks to Mathieu Anel for his help on this topic and several important discussions, and also useful remarks on a previous draft of this work; he should also emphasize the key influence of \cite{Anel} on this work. He is also specially grateful to Ivan Di Liberti for several helpful and stimulating discussions on this problem. He also thanks Paul-André Melliès, Clemens Berger and Jiri Adamek for past discussions on this topic.   

\section{Generalities about coslices and comma categories}

In this section we first recall very basic facts about coslice categories.

\begin{definition}
Let be $\mathcal{A}$ a category; the \emph{coslice} of $\mathcal{A}$ at an object $A$ is the category $ A\downarrow \mathcal{A}$ whose\begin{itemize}
    \item objects are arrows $ f : A \rightarrow B$ in $\mathcal{A}$
    \item morphisms $ f_1 \rightarrow f_2$ are all arrows $g$ inscribed in a triangle as below 
\[\begin{tikzcd}
	& A \\
	{B_1} && {B_2}
	\arrow["{f_1}"', from=1-2, to=2-1]
	\arrow["g"', from=2-1, to=2-3]
	\arrow["{f_2}", from=1-2, to=2-3]
\end{tikzcd}\]
\end{itemize}
We denote as $ \cod : A \downarrow \mathcal{A} \rightarrow \mathcal{A}$ the \emph{codomain functor} sending an arrow $ f : A \rightarrow B$ on its codomain $B$ and a morphism $ g : f_1 \rightarrow f_2$ on the underlying arrow $ g : B_1 \rightarrow B_2$. 
\end{definition}

Coslices are related to the following notion:

\begin{definition}
For a category $\mathcal{A}$, the \emph{arrow category} $ \mathcal{A}^2$ is the category whose\begin{itemize}
    \item objects are arrow $ f : A \rightarrow B$
    \item morphisms $f_1 \rightarrow f_2$ are pairs $(g,g')$ inscribed in a square as below
\[\begin{tikzcd}
	{A_1} & {A_2} \\
	{B_1} & {B_2}
	\arrow["{f_1}"', from=1-1, to=2-1]
	\arrow["{g'}"', from=2-1, to=2-2]
	\arrow["{f_2}", from=1-2, to=2-2]
	\arrow["g", from=1-1, to=1-2]
\end{tikzcd}\]
\end{itemize}
\end{definition}

Similarly, the arrow category is equipped with a codomain functor $ \cod : \mathcal{A}^2 \rightarrow \mathcal{A}$; it is moreover equipped with a further \emph{domain functor} $ \dom : \mathcal{A}^2\rightarrow \mathcal{A} $ sending $ f : A \rightarrow B$ on $A$ and $(g,g') : f_1 \rightarrow f_2$ on the underlying arrow between domains $ g : A_1 \rightarrow A_2$.\\ 

Arrows categories inherit generally lot of the structure of the underlying category. In particular, $\mathcal{A}^2$ inherits limits and colimits existing in $\mathcal{A}$, and moreover they are \emph{pointwise}, that is, for $ F : I \rightarrow \mathcal{A}^2$, the limit cone (resp. the colimiting cocone) in $\mathcal{B}^2$ are computed as 
\[\begin{tikzcd}
	{\underset{i \in I}{\lim} \,\dom \, F(i)} & {\underset{i \in I}{\lim} \,\cod \, F(i)} \\
	{\dom\, F(i)} & {\cod \, F(i)}
	\arrow["{p_i}"', from=1-1, to=2-1]
	\arrow["{F(i)}"', from=2-1, to=2-2]
	\arrow["{p_i'}", from=1-2, to=2-2]
	\arrow["{\underset{i \in I}{\lim} \, F(i)}", from=1-1, to=1-2]
\end{tikzcd} \hskip 1cm \textrm{resp.} \hskip 1cm \begin{tikzcd}
	{\dom\, F(i)} & {\cod \, F(i)} \\
	{\underset{i \in I}{\colim} \,\dom \, F(i)} & {\underset{i \in I}{\colim} \,\cod \, F(i)}
	\arrow["{q_i}"', from=1-1, to=2-1]
	\arrow["{F(i)}", from=1-1, to=1-2]
	\arrow["{q_i'}", from=1-2, to=2-2]
	\arrow["{\underset{i \in I}{\colim} \, F(i)}"', from=2-1, to=2-2]
\end{tikzcd} \]
where the projections $ p_i, p_i'$ (resp. the inclusion $ q_i, q_i'$) form the limiting cone (resp. the colimiting cocone) of the domains and codomains respectively, while the middle arrow is induced by the universal property of the limit (resp. the colimit) from the induced cone $ ( F(i) p_i : \lim_{i \in I} \dom \, F(i) \rightarrow \cod \, F(i))_{i \in I}$ (resp. from the induced cocone $ (q'_i F(i) : \dom \, F(i) \rightarrow \colim_{i \in I} \cod \, F(i))_{i \in I}$). As a consequence, both the functors $ \dom $ and $ \cod$ preserve either limits and colimits. \\

We also have a common section $ 1 : \mathcal{A} \rightarrow \mathcal{A}^2$, sending an object $A$ on its identity arrow $ 1_A : A \rightarrow A$, endowed with a string of adjunctions
\[\begin{tikzcd}
	{\mathcal{A}^2} && {\mathcal{A}}
	\arrow[""{name=0, anchor=center, inner sep=0}, "1"{description}, from=1-3, to=1-1]
	\arrow[""{name=1, anchor=center, inner sep=0}, "\cod", curve={height=-18pt}, from=1-1, to=1-3]
	\arrow[""{name=2, anchor=center, inner sep=0}, "\dom"', curve={height=18pt}, from=1-1, to=1-3]
	\arrow["\dashv"{anchor=center, rotate=-90}, draw=none, from=1, to=0]
	\arrow["\dashv"{anchor=center, rotate=-90}, draw=none, from=0, to=2]
\end{tikzcd}\]
It is easy to see that $ 1$ also preserves both limits and colimits. \\

We recall that limits in the coslice $ A \downarrow \mathcal{A}$ are computed from the universal property of the limit: for a diagram $ F : I \rightarrow A\downarrow  \mathcal{A}$ the limit of $F$ is computed as 
\[\begin{tikzcd}
	& {\lim \cod\, F} \\
	A & {\cod  \, F(i)}
	\arrow["{F_i}"', from=2-1, to=2-2]
	\arrow["{p_i}", from=1-2, to=2-2]
	\arrow["{(F(i))_{i \in I}}", from=2-1, to=1-2]
\end{tikzcd}\]
In particular the codomain functor preserves limits. \\

Let us now turn our attention to the colimits in the coslices. For any object $ A$ in a cocomplete category $ \mathcal{A}$, $A \downarrow \mathcal{A}$ has colimits and they are computed as follows. Let $ I \rightarrow A \downarrow \mathcal{A}$ a functor; from the category $ \overline{ I}$ by adding to $ I$ a cone made of an object $ i_0 $ and for each $ i \in I$ an arrow $ \overline{f}_i : i_0  \rightarrow i$ such that for each $ d : i \rightarrow j$ in $I$ one has $ d \overline{ f}_i = \overline{ f}_j$, and denote $ \iota : I \hookrightarrow \overline{ I}$ the full inclusion. Then one can extend canonically $\cod \,F$ into $ \overline{F}$ 

\[ 
\begin{tikzcd}
I \arrow[r, "F"] \arrow[d, "\iota"', hook] \arrow[rd, "=", phantom] & A \downarrow \mathcal{A} \arrow[d, "\cod"] \\
\overline{I} \arrow[r, "{\overline{F}}"', dashed]                  & \mathcal{A}                              
\end{tikzcd} \]
by defining $ \overline{ F}(i) = \cod (F(i))$, $ \overline{ F}(i_0) = A$ and $ \overline{F}(\overline{f}_i) = F(i)$. Then $\overline{F}$ form a cone in $\mathcal{A}$ with vertex $A$; now one has
\[  \cod ( \colim \, F ) \simeq \colim \, \overline{F} \]
and the colimit in $ A \downarrow \mathcal{A} $ is \[ \colim \, F = \overline{ q}_{i_0} : A \rightarrow \colim \, \overline{F} \]
which coincides also with any composite $ \overline{ q}_i F(i)$. So that actually $ \cod$ does not exactly preserve colimits, though any colimit in $ A \downarrow \mathcal{A}$ is sent to a colimit over a diagram canonically extending it: in some sense, we could say that $\cod$ ``corrects colimits". In particular we have a canonical arrow 
\[ \colim \; \cod \, F \rightarrow \cod(\colim \, F) \]
which may not be an isomorphism. 

\begin{remark}
Let us see concretely why the codomain functor $ \cod : A\downarrow \mathcal{A}$ does not preserve arbitrary colimits, failing in particular to preserve binary coproducts. Suppose that $ f_1 : A \rightarrow \cod(f_1), f_2 : A \rightarrow \cod(f_2)$ have coproduct $ f_1 + f_2 : A \rightarrow \cod(f_1 + f_2)$; then it may happen that, in $\mathcal{A}$, the inclusions of the coproduct $ \iota_1 : \cod(f_1) \rightarrow \cod(f_1) + \cod(f_2), \, \iota_2 : \cod(f_2) \rightarrow \cod(f_1) + \cod(f_2) $ are such that the composite $ \iota_1 f_1 $ and $ \iota_2 f_2$ are not equal, while the inclusion of the coproduct $ q_1 : f_1 \rightarrow f_1 + f_2, \, q_2 : f_2 \rightarrow f_1 + f_2$ are such that $ q_1 f_1 = f_1+f_2 = q_2f_2$ in $\mathcal{A}$: that is we have a diagram 
\[ 
\begin{tikzcd}
                                                                   & \cod(f_1) \arrow[rd, "\iota_1"] \arrow[rrd, "q_1", bend left]    &                               &              \\
A \arrow[ru, "f_1"] \arrow[rd, "f_2"'] \arrow[rr, "\neq", phantom] &                                                             & \cod(f_1) + \cod(f_2) \arrow[]{r}{\langle q_1, q_2 \rangle } & \cod(f_1+f_2) \\
                                                                   & \cod(f_2) \arrow[ru, "\iota_2"'] \arrow[rru, "q_2"', bend right] &                               &             
\end{tikzcd} \]
where the outer square commutes as well as the upper and lower triangles, but not the inner square. Then $\cod(f_1 +f_2)$ cannot be the coproduct $ \cod(f_1) + \cod(f_2)$: in fact, it is the coequalizer $\cod(f_1 + f_2) = \coeq( \iota_1 f_1, \iota_2f_2)$ in $\mathcal{A}$. More generally, for an arbitrary diagram $ f_{(-)} : I \rightarrow A\downarrow\mathcal{A}$, we have that $ \cod( \colim_{i \in I}  f_i )$ is the wide coequalizer of the set of parallel arrows $( \iota_i f_i : A \rightarrow \colim_{i \in I} \cod(f_i)$. 
\end{remark}

However for certain shapes of colimits, we have actually preservation by the codomain functor. Before that, recall first that a category $ I$ is said to be \emph{connected} if for any $ i, i'$ there exists a finite zigzag or arrows in $I$
\[\begin{tikzcd}[sep=small]
	& {i_1} && {i_n} \\
	i && {...} && {i'}
	\arrow[from=1-2, to=2-1]
	\arrow[from=1-2, to=2-3]
	\arrow[from=1-4, to=2-3]
	\arrow[from=1-4, to=2-5]
\end{tikzcd}\]

Now a functor $ G : I \rightarrow J$ is said to be \emph{final} if for any $ j $ in $J$ the comma category $ i\downarrow G$ is non empty and connected. In particular, whenever $I$ is filtered, this amounts to saying that for any $ j $ in $J$ there is some $ d : j \rightarrow F(i)$ and that any span $ d_1 : j \rightarrow F(i_1)$, $ d_2 : j \rightarrow F(i_2)$ can be completed by a commutative square
\[\begin{tikzcd}
	j & {F(i_2)} \\
	{F(i_1)} & {F(i_3)}
	\arrow["{d_1}"', from=1-1, to=2-1]
	\arrow["{d_2}", from=1-1, to=1-2]
	\arrow["{F(e_2)}", from=1-2, to=2-2]
	\arrow["{F(e_1)}"', from=2-1, to=2-2]
\end{tikzcd}\]
It is well known that a functor $ G : I \rightarrow J$ is final if and only if precomposing with it does not modify colimits: that is, if for any $ F : J \rightarrow \mathcal{A}$, we have an isomorphism
\[ \underset{J}{\colim} \, F \simeq  \underset{I}{\colim}\, F G \]

Equipped with those notions, we can say more on the codomain functor. 
\begin{lemma}
The codomain functor preserves connected colimits.
\end{lemma}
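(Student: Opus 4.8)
The plan is to reduce the claim to a finality statement and then to invoke the criterion recalled just above. By the explicit computation of colimits in the coslice, for a diagram $F : I \rightarrow A \downarrow \mathcal{A}$ one has $\cod(\colim F) \simeq \colim \overline{F}$, where $\overline{F} : \overline{I} \rightarrow \mathcal{A}$ is the canonical extension along the full inclusion $\iota : I \hookrightarrow \overline{I}$ and satisfies $\overline{F}\iota = \cod \, F$ on the nose (since $\overline{F}(i) = \cod(F(i))$ and $\overline{F}$ acts as $\cod \, F$ on the arrows of $I$). Hence, to prove that $\cod$ preserves the colimit of $F$, namely $\cod(\colim F) \simeq \colim \cod \, F$, it suffices to establish the isomorphism $\colim \overline{F} \simeq \colim \overline{F}\iota$; and this will hold for every diagram as soon as the inclusion $\iota : I \hookrightarrow \overline{I}$ is final.

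So the whole matter reduces to showing that $\iota$ is final whenever $I$ is connected. First I would record that the adjoined cone vertex $i_0$ is in fact initial in $\overline{I}$: by construction the arrows $\overline{f}_i : i_0 \rightarrow i$ are the only arrows emanating from $i_0$, and the relation $d\overline{f}_i = \overline{f}_j$ imposed for every $d : i \rightarrow j$ forces $\overline{f}_i$ to be the unique arrow $i_0 \rightarrow i$. With this in hand, I would verify the finality criterion by checking that $k \downarrow \iota$ is nonempty and connected for each object $k$ of $\overline{I}$, distinguishing two cases.

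For $k = i$ an object of $I$, the inclusion $\iota$ being full and all the new arrows emanating from $i_0$, there is a canonical isomorphism $i \downarrow \iota \cong i \downarrow I$; the latter has $(i, 1_i)$ as an initial object and is therefore nonempty and connected. For $k = i_0$, the objects of $i_0 \downarrow \iota$ are exactly the pairs $(i, \overline{f}_i)$ with $i$ ranging over $I$ (as $\overline{f}_i$ is the unique arrow $i_0 \rightarrow i$), while a morphism $(i, \overline{f}_i) \rightarrow (j, \overline{f}_j)$ is any arrow $d : i \rightarrow j$ of $I$, the required compatibility $d\overline{f}_i = \overline{f}_j$ being automatic. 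This yields a canonical isomorphism $i_0 \downarrow \iota \cong I$.

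It is precisely here that the connectedness hypothesis enters: $i_0 \downarrow \iota \cong I$ is nonempty and connected exactly when $I$ is a connected category. Thus, for connected $I$ every comma category $k \downarrow \iota$ is nonempty and connected, so $\iota$ is final and the desired preservation follows from the chain $\cod(\colim F) \simeq \colim \overline{F} \simeq \colim \overline{F}\iota = \colim \cod \, F$. The only genuinely delicate point — the crux of the argument — is the identification $i_0 \downarrow \iota \cong I$, which both explains why $\cod$ fails to preserve colimits in general (the ``correction'' over $\overline{I}$ genuinely differs from the colimit over $I$ when $I$ is disconnected, e.g. for coproducts) and shows that connectedness is exactly the condition removing this discrepancy.
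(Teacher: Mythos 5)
Your argument is correct and follows essentially the same route as the paper: both reduce the statement to the finality of the inclusion $\iota : I \hookrightarrow \overline{I}$ and then conclude via $\cod(\colim F) \simeq \colim \overline{F} \simeq \colim \overline{F}\iota = \colim \cod\, F$, with connectedness of $I$ used exactly to show that $i_0 \downarrow \iota$ is connected. Your explicit identification $i_0 \downarrow \iota \cong I$ is a slightly more detailed packaging of the paper's zigzag argument, but it is the same idea.
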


\begin{proof}
This is because in this case the inclusion $ \iota : I \hookrightarrow \overline{I}$ is cofinal: indeed, for any $ i,j$, the unique span $ \overline{f}_i : i_0 \rightarrow i$, $ \overline{f}_j : i_0 \rightarrow j$ can be completed by a zigzag because $ I$ is connected, and we imposed that the $\overline{f}_i$ form a cone, so that triangles in this zigzag commute. Hence we have
\begin{align*}
     \cod (\colim \, F) &= \colim \, \overline{ F} \\
     &= \colim \, \overline{F} \, \iota  \\
     &= \colim \, \cod \, F
\end{align*} \end{proof}

\begin{corollary}
If $\mathcal{A}$ has filtered colimits, then so has $ A\downarrow \mathcal{A}$ and the codomain functor preserves them. 
\end{corollary}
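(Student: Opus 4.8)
The plan is to reduce everything to the preceding lemma by observing that filtered colimits are a special case of connected colimits, so that the only genuine work is to establish \emph{existence} of filtered colimits in the coslice under the weaker hypothesis that $\mathcal{A}$ merely has filtered colimits rather than being cocomplete. First I would record that every filtered category $I$ is connected: given $i, i'$ in $I$, filteredness provides an object $k$ together with arrows $i \rightarrow k \leftarrow i'$, which is already a length-two zigzag witnessing connectedness. Hence the hypotheses of the lemma apply to any filtered diagram $ F : I \rightarrow A\downarrow \mathcal{A}$.

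For existence, I would reuse the construction recalled just before the lemma: extend $\cod \, F$ to $\overline{F} : \overline{I} \rightarrow \mathcal{A}$ along the inclusion $\iota : I \hookrightarrow \overline{I}$. The proof of the lemma shows that $\iota$ is final whenever $I$ is connected, which is the case here. By the characterization of final functors recalled above, the colimit of $\overline{F}$ over $\overline{I}$ may be computed as $\colim \, \overline{F}\iota = \colim \, \cod \, F$; but the latter is a filtered colimit of a diagram in $\mathcal{A}$, which exists by hypothesis. Therefore $\colim \, \overline{F}$ exists, and with it the colimit $\overline{q}_{i_0} : A \rightarrow \colim \, \overline{F}$ in $A \downarrow \mathcal{A}$, so that $A\downarrow \mathcal{A}$ indeed has filtered colimits. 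Preservation is then immediate: since $I$ is connected, the lemma gives $\cod(\colim \, F) = \colim \, \overline{F} \simeq \colim \, \cod \, F$, so $\cod$ preserves the filtered colimit just constructed.

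I expect the main (indeed only) subtlety to be that the general recipe for coslice colimits recalled earlier presupposed cocompleteness of $\mathcal{A}$, whereas here we are given only filtered colimits. The finality of $\iota$ is exactly what resolves this: it lets us replace the colimit of $\overline{F}$ over the augmented shape $\overline{I}$ by the filtered colimit of $\cod \, F$ over $I$, which is precisely the data the hypothesis guarantees to exist. Once this reduction is made, both conclusions fall out of the lemma without further computation.
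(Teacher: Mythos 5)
Your proof is correct and follows essentially the route the paper intends: the corollary is stated as an immediate consequence of the preceding lemma, via the observation that every filtered category is connected (a cocone $i \rightarrow k \leftarrow i'$ is already a zigzag). Your additional care in deducing \emph{existence} of the colimit from finality of $\iota : I \hookrightarrow \overline{I}$, rather than from cocompleteness of $\mathcal{A}$, correctly fills in the one point the paper leaves implicit.
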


\begin{corollary}
If $\mathcal{A}$ has coequalizers, then so has $ A\downarrow \mathcal{A}$ and the codomain functor preserves them.
\end{corollary}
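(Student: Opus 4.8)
The plan is to recognise a coequalizer as a particular connected colimit and then invoke the preceding Lemma. Write $\mathbf{2}$ for the category with two objects $0,1$ and two parallel non-identity arrows $0 \rightrightarrows 1$; this is the shape of a coequalizer. This category is connected, since $0$ and $1$ are joined by (either of) the two parallel arrows, a zigzag of length one. Hence any coequalizer is an instance of a connected colimit, and preservation by $\cod$ will follow from the Lemma as soon as existence is secured.

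For existence, I would start from a parallel pair in $A\downarrow\mathcal{A}$, that is two coslice morphisms $g,h : f_1 \rightarrow f_2$ with underlying maps $g,h : B_1 \rightarrow B_2$ satisfying $gf_1 = f_2 = hf_1$. Since $\mathcal{A}$ has coequalizers, form $c : B_2 \rightarrow C$, the coequalizer of $g,h$ in $\mathcal{A}$. Then $cf_2 : A \rightarrow C$ is an object of $A\downarrow\mathcal{A}$ and $c : f_2 \rightarrow cf_2$ is a coslice morphism with $cg = ch$. Its universal property in the coslice is inherited from that of $c$ in $\mathcal{A}$: a cocone from the pair to a vertex is exactly a coslice morphism $d : f_2 \rightarrow \phi$, i.e. a map $d : B_2 \rightarrow D$ with $dg = dh$ (the vertex $\phi = df_2$ being then determined), and such a $d$ factors uniquely through $c$ by definition of $\coeq(g,h)$ in $\mathcal{A}$.

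Alternatively, and more in keeping with the earlier discussion, one can run the general coslice-colimit construction for the shape $\mathbf{2}$: the extended diagram $\overline{F}$ over $\overline{\mathbf{2}}$ satisfies $\colim \overline{F} \simeq \colim \cod F$ precisely because the inclusion $\iota : \mathbf{2} \hookrightarrow \overline{\mathbf{2}}$ is final, which is exactly the mechanism used in the proof of the Lemma. In particular $\colim \overline{F}$ reduces to the single coequalizer $\coeq(g,h)$ in $\mathcal{A}$, so the construction goes through under the weaker hypothesis that $\mathcal{A}$ merely possesses coequalizers rather than being cocomplete.

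Finally, preservation is immediate: as $\mathbf{2}$ is connected, the Lemma yields $\cod\big(\coeq_{A\downarrow\mathcal{A}}(g,h)\big) \simeq \coeq_{\mathcal{A}}(\cod g, \cod h) = C$, so $\cod$ carries the coslice coequalizer to the coequalizer computed in $\mathcal{A}$. The only point deserving care — and it is the sole mild obstacle — is that the general colimit formula was stated for cocomplete $\mathcal{A}$; one must verify, as above, that restricting to the connected shape $\mathbf{2}$ only ever calls upon a coequalizer in $\mathcal{A}$, so that both the construction and its preservation remain valid under the stated weaker hypothesis.
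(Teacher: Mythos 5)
Your proposal is correct and follows essentially the same route as the paper: the explicit construction in your second paragraph --- forming the coequalizer $c$ of the underlying parallel pair in $\mathcal{A}$ and exhibiting the composite $c f_2 : A \rightarrow \coeq(g,h)$ together with $c$ as the coequalizer in $A \downarrow \mathcal{A}$ --- is precisely the argument the paper gives, and preservation by $\cod$ then reads off directly (or, as you note, from the connected-colimits lemma, the parallel-pair shape being connected). Your added care about the cocompleteness hypothesis in the general colimit formula is a sensible precaution, but the direct construction already sidesteps it.
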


\begin{remark}
Thoses facts are well known (see for instance \cite{borceux1994handbook}[Proposition 2.16.3]). For coequalizers, observe that in the diagram below with the coequalizer computed in $\mathcal{A}$
\[\begin{tikzcd}
	{B_1} & {B_2} & {\coeq(g,g')} \\
	A
	\arrow["{q_{g,g'}}", from=1-2, to=1-3]
	\arrow["g", shift left=1, from=1-1, to=1-2]
	\arrow["{g'}"', shift right=1, from=1-1, to=1-2]
	\arrow["{f_1}", from=2-1, to=1-1]
	\arrow["{f_2}"', from=2-1, to=1-2]
\end{tikzcd}\]
the composite $ q_{g,g'} f_2 : A \rightarrow \coeq(g,g') $ together with $ q_{g,g'}$ is a coequalizer in $ A \downarrow \mathcal{B}$. 
\end{remark}

To conclude this section, we should give a lemma that will be of use in the following section:

\begin{lemma}\label{esofull cofinal}
Let be $ F : \mathcal{I} \rightarrow \mathcal{J}$ a functor such that \begin{itemize}
    \item $F$ is essentially surjective and full
    \item $I$ is filtered
\end{itemize}
Then $ F$ is cofinal and moreover $ J$ is also filtered. 
\end{lemma}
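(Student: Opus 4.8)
The plan is to verify the two conditions isolated in the finality criterion recalled above for a functor out of a filtered category---nonemptiness and the completion of spans in each comma category $j\downarrow F$---and then to check separately the three axioms of filteredness for $\mathcal{J}$. Throughout, the single recurring device is the same: use essential surjectivity to present an object of $\mathcal{J}$ as isomorphic to some $F(i)$, use fullness to lift the arrows of $\mathcal{J}$ that are involved back to arrows of $\mathcal{I}$, carry out the required amalgamation or coequalization inside the filtered category $\mathcal{I}$, and finally transport the outcome back along $F$.

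For finality, fix $j\in\mathcal{J}$. Nonemptiness of $j\downarrow F$ is immediate: essential surjectivity gives an object $i_0$ and an isomorphism $\phi\colon j\to F(i_0)$, which is itself an object of $j\downarrow F$. For connectedness it suffices, by the criterion, to complete an arbitrary span $d_1\colon j\to F(i_1)$, $d_2\colon j\to F(i_2)$. Setting $\psi=\phi^{-1}$, the composites $d_1\psi\colon F(i_0)\to F(i_1)$ and $d_2\psi\colon F(i_0)\to F(i_2)$ are arrows between objects in the image of $F$, so fullness supplies lifts $\tilde d_1\colon i_0\to i_1$ and $\tilde d_2\colon i_0\to i_2$ with $F(\tilde d_k)=d_k\psi$, whence $d_k=F(\tilde d_k)\phi$. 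Now $\tilde d_1,\tilde d_2$ form a span in $\mathcal{I}$; filteredness first provides a cocone $u_1\colon i_1\to i_4$, $u_2\colon i_2\to i_4$, and then an arrow $w\colon i_4\to i_3$ coequalizing the parallel pair $u_1\tilde d_1,u_2\tilde d_2\colon i_0\to i_4$. Putting $e_1=wu_1$ and $e_2=wu_2$ we get $e_1\tilde d_1=e_2\tilde d_2$ in $\mathcal{I}$; applying $F$ and precomposing with $\phi$ turns this into $F(e_1)d_1=F(e_2)d_2$, the commutative square required to close the span.

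For filteredness of $\mathcal{J}$, nonemptiness and the existence of upper bounds follow from essential surjectivity together with cocones in $\mathcal{I}$: given $j_1,j_2$, present them as isomorphic to $F(i_1),F(i_2)$, choose a cocone $i_1,i_2\to i_3$ in $\mathcal{I}$, and transport it along the isomorphisms to obtain arrows $j_1,j_2\to F(i_3)$. The only substantive axiom is the coequalization of a parallel pair $a,b\colon j_1\to j_2$. Choosing isomorphisms $\phi_1\colon j_1\to F(i_1)$ and $\phi_2\colon j_2\to F(i_2)$, I transport $a,b$ to the parallel pair $\phi_2a\phi_1^{-1},\phi_2b\phi_1^{-1}\colon F(i_1)\to F(i_2)$, lift these by fullness to $\tilde a,\tilde b\colon i_1\to i_2$, and coequalize them in the filtered $\mathcal{I}$ by some $w\colon i_2\to i_3$. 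Then $c=F(w)\phi_2\colon j_2\to F(i_3)$ satisfies $ca=cb$ once the isomorphism $\phi_1^{-1}$ is cancelled on the right, so $c$ is the desired coequalizing arrow.

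The point to watch is that $F$ is assumed full but not faithful, so the lifts produced by fullness are neither unique nor compatible with composition; the argument must therefore be organized so that fullness is invoked only for the existence of lifts, while every equality of arrows is first forced inside $\mathcal{I}$---where filteredness genuinely supplies the coequalizing morphisms---and only afterwards transported by $F$. Consequently the main labour is simply the careful bookkeeping of the essential-surjectivity isomorphisms $\phi,\psi,\phi_1,\phi_2$; no faithfulness hypothesis is needed at any stage.
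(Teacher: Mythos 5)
Your proof is correct and follows essentially the same route as the paper: use essential surjectivity to replace $j$ by some $F(i_0)$, fullness to lift the span, and filteredness of $\mathcal{I}$ to build a cocone and then coequalize before transporting back along $F$. You are in fact more explicit than the paper on the final claim (the paper dismisses filteredness of $\mathcal{J}$ with ``inherited from $I$''), and your closing remark about invoking fullness only for existence of lifts, never for uniqueness, is exactly the right point of care.
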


\begin{proof}
For any $j $ in $J$, the comma $ j \downarrow F$ is non empty, as it contains an isomorphism $ j \simeq F(i)$ given by essential surjectivity. Now, for any span
\[\begin{tikzcd}
	j & {F(i_2)} \\
	{F(i_1)}
	\arrow["{f_1}"', from=1-1, to=2-1]
	\arrow["{f_2}", from=1-1, to=1-2]
\end{tikzcd}\]
then filteredness of $ I$ ensures the existence of a cospan $ d_1 : i_1 \rightarrow i_3 $, $ d_2 : i_2 \rightarrow i_3$; however, $ F(d_1)f_1$ and $ F(i_2)f_2$ may not be equal: however, essential surjectivity gives some $ u : j \simeq F(i)$ and fullness gives some $ e_1 : i \rightarrow i_1$, $ e_2 : i \rightarrow i_2$ such that $ f_1 = F(e_1)w$ and $ f_2= F(e_2)w $, and $ d_1e_1$ and $ d_2e_2$ are equalized in $ I$ by some $ d_3 : i_3 \rightarrow i_4$ so that the following diagram commutes
\[\begin{tikzcd}[column sep=large]
	j \\
	& {F(i)} & {F(i_2)} \\
	& {F(i_1)} & {F(i_4)}
	\arrow["{F(d_3d_1)}"', from=3-2, to=3-3]
	\arrow["{F(d_3d_2)}", from=2-3, to=3-3]
	\arrow["{f_2}", curve={height=-12pt}, from=1-1, to=2-3]
	\arrow["{f_1}"', curve={height=12pt}, from=1-1, to=3-2]
	\arrow["{w \atop \simeq}"{description}, from=1-1, to=2-2]
	\arrow["{F(e_2)}"{description}, from=2-2, to=2-3]
	\arrow["{F(e_1)}"{description}, from=2-2, to=3-2]
\end{tikzcd}\]
Hence $i \downarrow F$ is non empty and connected: $F $ is cofinal. Filteredness of $ J$ is inherited from $I$. 
\end{proof}

\section{Coslices of Locally finitely presentable categories}

In this section we turn to coslices of locally finitely presentable categories. It is already well known that coslices, as well as slices, of locally finitely presentable categories are still locally finitely presentable (and also more generally, (co)slices of accessible categories are accessible), as it was for instance proven in \cite{adamek1994locally}[Proposition 1.57]. However this instance seems to be the sole reference about it in the literature, though it processes by a specific characterization of locally finitely presentable categories that dispenses to care about explicit description of finitely presented object. It seems that no other reference in the literature treated this topic nor offered concrete proof, nor any explicit description of the finitely presented objects. However this problem is related to a specific version of the small object argument, as done in \cite{Anel} and \cite{Coste}, which examine a generator of finitely presented left map in the context of a factorization system. Inspired from those sources, we provide here a characterization of the generator of finitely presented objects in the coslices of a locally finitely presented category. \\

Before going any further, we recall the following generalities about locally finitely presentable categories. We recall that a finitely presented object in a category $\mathcal{B}$ is some $K$ such that for any $F : I \rightarrow \mathcal{B}$ with $ I$ filtered we have an isomorphism 
\[ \mathcal{B}[K, \underset{i \in I}{\colim} \, F(i)] \simeq \underset{i \in I}{\colim} \,\mathcal{B}[K,  F(i)] \]
Concretely this means that: \begin{itemize}
    \item for any arrow $ a :K \rightarrow \colim_{i \in I}\, F(i)$, there is some $ i \in I$ and some factorization, called a \emph{lift} 
\[\begin{tikzcd}
	& {F(i)} \\
	K & {\underset{i \in I}{\colim} \, F(i)}
	\arrow["{q_i}", from=1-2, to=2-2]
	\arrow["a"', from=2-1, to=2-2]
	\arrow["{b}", from=2-1, to=1-2]
\end{tikzcd}\]
(we shall often denote such a lift as the pair $(i,b)$);
    \item and for any two such lifts $(b_1,i_1)$ and $ (b_2, i_2)$ of a same arrow $a$, we shall refer to as \emph{parallel lifts}, there exists a cospan $ d_1 : i_1 \rightarrow i$ and $ d_2 : i_2 \rightarrow i$ such that both lifts are equalized jointly into a third lift, we shall often call a \emph{further refinement} 
\[\begin{tikzcd}
	& K \\
	{F(i_1)} && {F(i_2)} \\
	& {F(i)} \\
	& {\underset{i \in I}{\colim} \, F(i)}
	\arrow["{F(d_1)}"{description, pos=0.4}, from=2-1, to=3-2]
	\arrow["{F(d_2)}"{description}, from=2-3, to=3-2]
	\arrow["{q_i}"{description}, from=3-2, to=4-2]
	\arrow["{q_{i_2}}"{description}, curve={height=-12pt}, from=2-3, to=4-2]
	\arrow["{q_{i_1}}"{description}, curve={height=12pt}, from=2-1, to=4-2]
	\arrow["{b_1}"', from=1-2, to=2-1]
	\arrow["{b_2}"{pos=0.7}, from=1-2, to=2-3]
\end{tikzcd}\]
\end{itemize}

First the following lemma from \cite{adamek1994locally}[Exercice 1.o] says that finitely presented objects can be jointly used to test if a cocone is colimitting:

\begin{lemma}\label{testing filtered colimits}
Let be $ D : I \rightarrow \mathcal{B}$ with $ I$ finitary and $\mathcal{B}$ locally finitely presentable. Then a cocone $ (f_i : D_i \rightarrow C)_{i \in I}$ in $\mathcal{B}$ exhibits $ C$ as the colimit $ \colim\, D$ if and only if for any finitely presented object $K$ one has an isomorphism natural in $K$ 
\[ \mathcal{B}(K, C) \simeq \underset{i \in I}{\colim}\; \mathcal{B}(K, D_i) \]
\end{lemma}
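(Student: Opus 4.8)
The plan is to treat the two implications separately, the forward one being essentially a restatement of the defining property of finitely presented objects, and the reverse one requiring that such objects be numerous enough to detect isomorphisms. For the ``only if'' direction, suppose the cocone $(f_i : D_i \rightarrow C)_{i \in I}$ exhibits $C$ as $\colim D$. Since $I$ is filtered and each $K$ is finitely presented, the definition of finite presentability recalled above yields, for every such $K$, a bijection $\mathcal{B}(K, C) \simeq \colim_{i \in I} \mathcal{B}(K, D_i)$; one checks directly that the canonical comparison map induced by the postcompositions $\mathcal{B}(K, f_i)$ is the isomorphism in question, and that it is natural in $K$ because all the maps involved are postcompositions. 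This direction therefore needs nothing beyond the definition.

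For the ``if'' direction, I would first use that $\mathcal{B}$, being locally finitely presentable, is cocomplete, so that the genuine colimit $C' = \colim D$ exists together with its colimiting cocone $(g_i : D_i \rightarrow C')_{i \in I}$. The universal property produces a unique comparison arrow $\phi : C' \rightarrow C$ with $\phi g_i = f_i$ for all $i$, and the whole task reduces to showing $\phi$ is invertible. For each finitely presented $K$ there is a commuting triangle of sets whose apex is $\colim_{i \in I} \mathcal{B}(K, D_i)$ and whose two legs are the canonical maps into $\mathcal{B}(K, C')$ and into $\mathcal{B}(K, C)$, the remaining side being $\mathcal{B}(K, \phi)$; commutativity follows from $\phi g_i = f_i$. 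The left leg is a bijection by the forward direction applied to the genuine colimit $C'$, and the right leg is a bijection by hypothesis, so $\mathcal{B}(K, \phi)$ is a bijection for every finitely presented $K$, naturally in $K$.

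It then remains to promote this pointwise invertibility to invertibility of $\phi$ itself, and this is the only step drawing on genuine structure theory: in a locally finitely presentable category the finitely presented objects form a dense generator, so the nerve (restricted Yoneda) functor $N : \mathcal{B} \rightarrow [\mathcal{B}_{\mathrm{fp}}\op, \Set]$ is fully faithful and in particular reflects isomorphisms (see \cite{adamek1994locally}). By naturality in $K$ the family of bijections $\mathcal{B}(K, \phi)$ assembles into an isomorphism $N(\phi)$ in the presheaf category, whence $\phi$ is an isomorphism and the cocone $(f_i)$ is colimiting. The main obstacle is thus isolated in this last appeal to density of the finitely presented generator; everything preceding it is formal manipulation of the universal property of colimits together with the definition of finite presentability.
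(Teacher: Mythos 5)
Your proof is correct and follows essentially the same route as the paper: both directions reduce to the density of the finitely presented objects, which the paper invokes as the equivalence of comma categories $\mathcal{B}_\omega \downarrow C \simeq \mathcal{B}_\omega \downarrow \colim_{i} D_i$ and you invoke as full faithfulness of the restricted Yoneda embedding --- the same fact in different clothing. If anything, your version is slightly more careful than the paper's, since you explicitly construct the comparison map $\phi$ and verify that the hypothesized bijections are $\mathcal{B}(K,\phi)$, which is what lets you conclude that the given cocone (and not merely some abstract isomorph of $C$) is colimiting.
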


\begin{proof}
This is because finitely presented objects from altogether a separating family. If $C$ is such that we have in each $ K$ the natural isomorphism above then one has also a natural isomorphism 
\[ \mathcal{B}(K, C) \simeq  \mathcal{B}(K,\underset{i \in I}{\colim}\; D_i) \]
Hence by naturallity we have an equivalence $ \mathcal{B}_\omega \downarrow C \simeq \mathcal{B}_\omega \downarrow \colim_{i \in I} D_i $, and hence an isomorphism $ C \simeq \colim_{i \in I} D_i $.
\end{proof}

We also will need the following, when encountering finitary functors, to control the behaviour of their eventual left adjoints:

\begin{lemma}\label{Ladj of finitary functor}
Let $F \dashv G$ be adjoint functors between locally finitely presentable categories. Then $F$ maps finitely presented objects on finitely presented objects if and only if $G $ is finitary. 
\end{lemma}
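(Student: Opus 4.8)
The plan is to run everything through the defining adjunction isomorphism $\mathcal{B}(FK,B)\cong\mathcal{A}(K,GB)$, natural in both variables, where I write $F:\mathcal{A}\rightarrow\mathcal{B}$ for the left adjoint and $G:\mathcal{B}\rightarrow\mathcal{A}$ for the right adjoint, both categories being locally finitely presentable. Each direction is obtained by assembling a chain of natural isomorphisms out of three ingredients: the adjunction, the hypothesis in force (either ``$G$ finitary'' or ``$F$ preserves finitely presented objects''), and the hom-set characterization of finitely presented objects. The only genuinely subtle point is the backward direction, where I must check that the chain of isomorphisms is the one induced by the \emph{canonical} comparison cocone, so that \Cref{testing filtered colimits} may be invoked.

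First I would treat the implication that $G$ finitary forces $F$ to preserve finitely presented objects. Let $K$ be finitely presented in $\mathcal{A}$ and let $D:I\rightarrow\mathcal{B}$ be a filtered diagram. Then I would write the chain
\[ \mathcal{B}(FK, \colim_{i\in I} D_i) \cong \mathcal{A}(K, G\,\colim_{i\in I} D_i) \cong \mathcal{A}(K, \colim_{i\in I} GD_i) \cong \colim_{i\in I}\mathcal{A}(K, GD_i) \cong \colim_{i\in I}\mathcal{B}(FK, D_i), \]
where the outer isomorphisms are the adjunction, the second uses that $G$ preserves the filtered colimit, and the third uses that $K$ is finitely presented. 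All isomorphisms being natural in $D$ and compatible with the colimit cocone, this exhibits $FK$ as finitely presented.

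Conversely, I would assume $F$ preserves finitely presented objects and show $G$ preserves filtered colimits. Given a filtered $D:I\rightarrow\mathcal{B}$ with colimit cocone $(q_i:D_i\rightarrow\colim_{i\in I} D_i)_{i\in I}$, I consider the cocone $(Gq_i:GD_i\rightarrow G\,\colim_{i\in I} D_i)_{i\in I}$ in $\mathcal{A}$ and aim to prove it colimiting. By \Cref{testing filtered colimits} it suffices to produce, for every finitely presented $K$, a natural isomorphism $\mathcal{A}(K,G\,\colim_{i\in I} D_i)\cong\colim_{i\in I}\mathcal{A}(K,GD_i)$. I would build this as
\[ \mathcal{A}(K, G\,\colim_{i\in I} D_i) \cong \mathcal{B}(FK, \colim_{i\in I} D_i) \cong \colim_{i\in I}\mathcal{B}(FK, D_i) \cong \colim_{i\in I}\mathcal{A}(K, GD_i), \]
where the outer isomorphisms are again the adjunction and the middle one uses that $FK$ is finitely presented, which holds by hypothesis.

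The main obstacle is precisely the last step: \Cref{testing filtered colimits} requires the natural isomorphism to be the one \emph{induced by the comparison cocone} $(Gq_i)_{i\in I}$, not merely an abstract bijection of hom-sets. I would resolve this by tracking naturality in $K$ carefully: the adjunction isomorphism is natural, so the composite above coincides on the nose with the map $\mathcal{A}(K,-)$ applied to the canonical comparison morphism $\colim_{i\in I} GD_i\rightarrow G\,\colim_{i\in I} D_i$, and the component at each $i$ factors through $Gq_i$ by the triangle identities. Since this holds for all finitely presented $K$, which form a strong generator, \Cref{testing filtered colimits} then yields that the comparison morphism is an isomorphism, i.e. that $G$ is finitary.
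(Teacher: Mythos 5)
Your proposal is correct and follows essentially the same route as the paper: both directions are obtained by composing the adjunction isomorphism with the hom-set characterization of finitely presented objects, and the direction recovering finitarity of $G$ is concluded via \cref{testing filtered colimits} exactly as in the paper's proof. Your additional care in checking that the composite isomorphism is the one induced by the canonical cocone $(Gq_i)_{i\in I}$ is a point the paper leaves implicit, but it does not change the argument.
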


\begin{proof}
If $F : \mathcal{B} \rightarrow \mathcal{A}$ preserves finitely presented objects and $D : I \rightarrow \mathcal{A} $ a filtered diagram, then for any finitely presented object $ K$ in $\mathcal{B}$:
\begin{align*}
    \begin{split}
        \mathcal{B} (K , G(\underset{i \in I}{\colim} \; D_i)) &\simeq \mathcal{A}(F(K), \underset{i \in I}{\colim} \; D_i) \\
        &\simeq \underset{i \in I}{\colim} \; \mathcal{A}(F(K), D_i) \\
        &\simeq \underset{i \in I}{\colim} \; \mathcal{B}(K, G(D_i))
    \end{split}
\end{align*}
But from \cref{testing filtered colimits} this means that $G(\colim_{i \in I}D_i) \simeq \colim_{i \in I}G(D_i) $. Then converse is immediate: if $ G$ is finitary, $K$ is finitely presented and $ D : I \rightarrow \mathcal{A}$ is filtered, then permuting the identities above ensures that $ F(K)$ still is finitely presented. 
\end{proof}

Then we should give some precision on the arrow category of a locally finitely presentable category, and more generally, about finitely presented object 

\begin{proposition}
If $ \mathcal{B}$ is locally finitely presentable, then for any finite diagram $D$, the functor category $ \mathcal{B}^D$ is locally finitely presentable and we have 
\[ (\mathcal{B}^D)_\omega \simeq (\mathcal{B}_\omega)^D \]
\end{proposition}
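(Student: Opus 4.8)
The plan is to use that both limits and colimits in $\mathcal{B}^D$ are computed pointwise, so that each evaluation functor $\mathrm{ev}_d \colon \mathcal{B}^D \to \mathcal{B}$, $F \mapsto F(d)$, preserves all of them; in particular every $\mathrm{ev}_d$ is finitary and the family $(\mathrm{ev}_d)_{d \in \ob D}$ is jointly conservative. Since $\mathcal{B}$ is complete and cocomplete, so is $\mathcal{B}^D$. The core of the argument is a gluing lemma: \emph{if $F(d)$ is finitely presented in $\mathcal{B}$ for every object $d$ of $D$, then $F$ is finitely presented in $\mathcal{B}^D$}. I expect this to be the main obstacle, and it is exactly where the finiteness of $D$ enters.

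To prove it I would take a filtered diagram $H \colon I \to \mathcal{B}^D$ with pointwise colimit $C$ and a natural transformation $\alpha \colon F \Rightarrow C$, and build a lift to some stage of $H$ using the ``lift plus further refinement'' description of finitely presented objects recalled above. Each component $\alpha_d \colon F(d) \to \colim_i H_i(d)$ factors through some stage because $F(d)$ is finitely presented; as $\ob D$ is finite and $I$ filtered, all these factorisations can be transported to one common stage $i^{*}$, yielding a family $(b_d)_d$ with $q_{i^{*}} b_d = \alpha_d$. This family need not be natural, but for each of the finitely many arrows $\delta$ of $D$ the two composites expressing naturality are parallel lifts of the same arrow into $C$, hence get equalised at a further stage; finiteness of the set of arrows of $D$ together with filteredness of $I$ then provides a single stage $i^{**}$ equalising all of them simultaneously, so that the transported family is a genuine natural transformation $F \Rightarrow H_{i^{**}}$ lifting $\alpha$. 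An analogous argument over the objects of $D$ gives uniqueness of lifts up to refinement. This establishes $\mathcal{B}^D(F, C) \simeq \colim_i \mathcal{B}^D(F, H_i)$, hence the inclusion $(\mathcal{B}_\omega)^D \subseteq (\mathcal{B}^D)_\omega$.

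I would then show $\mathcal{B}^D$ is locally finitely presentable. For an object $d$ of $D$ and an object $G$ of a small generating family of $\mathcal{B}_\omega$, the left adjoint $L_d$ to $\mathrm{ev}_d$ is the copower $L_d(G)(d') = D(d,d') \cdot G$; as $D$ is finite this is a finite coproduct of copies of $G$, so $L_d(G)$ is pointwise finitely presented and therefore finitely presented in $\mathcal{B}^D$ by the gluing lemma. The adjunction isomorphism $\mathcal{B}^D(L_d(G), F) \simeq \mathcal{B}(G, F(d))$, joint conservativity of the $\mathrm{ev}_d$, and the fact that the $G$'s form a strong generator of $\mathcal{B}$ together show that the essentially small family $(L_d(G))_{d,G}$ is a strong generator of $\mathcal{B}^D$ made of finitely presented objects. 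Cocompleteness plus such a strong generator is exactly the standard criterion for local finite presentability, so $\mathcal{B}^D$ is locally finitely presentable.

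It remains to prove $(\mathcal{B}^D)_\omega \subseteq (\mathcal{B}_\omega)^D$. Here I would invoke the standard description of the finitely presented objects of a locally finitely presentable category as the retracts of finite colimits of a strong generator of finitely presented objects, applied to the generator $(L_d(G))_{d,G}$. Since colimits in $\mathcal{B}^D$ are pointwise and $\mathcal{B}_\omega$ is closed under finite colimits and retracts, every retract of a finite colimit of the $L_d(G)$ is pointwise finitely presented; thus each finitely presented $F$ satisfies $F(d) \in \mathcal{B}_\omega$ for all $d$. Combining the two inclusions gives $(\mathcal{B}^D)_\omega \simeq (\mathcal{B}_\omega)^D$.
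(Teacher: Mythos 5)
Your argument is correct. Note, though, that the paper does not actually prove this proposition: it merely cites Makkai--Par\'e (Lemma 5.1) for the accessible case and asserts that the locally finitely presentable version is ``an automatic consequence'' of colimits in $\mathcal{B}^D$ being pointwise. What you have written is a complete, self-contained proof of exactly the statement the paper takes for granted, and all three of its ingredients are sound: the gluing lemma (pointwise finitely presented implies finitely presented, via transporting the componentwise lifts to a common stage and then repairing naturality on the finitely many arrows of $D$ by a further refinement), the strong generator $\bigl(L_d(G)\bigr)_{d,G}$ with $L_d(G)(d') = D(d,d')\cdot G$ a finite coproduct, and the converse inclusion via the standard description of finitely presented objects as retracts of finite colimits of the generator, combined with pointwiseness of colimits and closure of $\mathcal{B}_\omega$ under finite colimits and retracts. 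A genuine merit of your version over the paper's remark is that it makes visible precisely where finiteness of $D$ enters --- both the finiteness of $\ob D$ (to find a common stage for the lifts) and the finiteness of the hom-sets (so that $L_d(G)$ is pointwise finitely presented and so that naturality can be enforced by finitely many equalisations); the statement genuinely fails without these hypotheses, e.g.\ for $D$ the one-object category on the monoid $\mathbb{N}$, so the paper's ``automatic consequence'' is hiding real content that your proof supplies.
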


A version of this concerning accessible categories is provided in \cite{makkai1988strong}[Lemma 5.1]. The corresponding statement for locally finitely presentable categories is an automatic consequence of the existence of colimits in functors categories $ \mathcal{B}^D$ where they are pointwise. \\

In particular, for a locally finitely presentable category, the arrow category $\mathcal{B}^2$ is locally finitely presentable and we have 
\[ (\mathcal{B}^2)_\omega \simeq (\mathcal{B}_\omega)^2 \]

Now we can turn to the explicit description of the following generator of finitely presented objects in the coslices. We first introduce our candidate - whose choice is of course guessed from a similar technique in \cite{Anel} and \cite{Coste}. Then we prove it to enjoy some lifting properties, stating that any finite diagram (and in particular any triangle) in it can be obtained as a pushout of a diagram of finitely presented objects of the same shape. From this we deduce it to be closed under finite limits - in particular under retracts. Then we prove the coslice to be locally finitely presentable and our candidate to be its generator of finitely presented objects.

\begin{definition}
For any object $B$ in $\mathcal{B}$, define the \emph{coslice generator at $B$} as full subcategory $ \mathcal{G}_B$ of $ B \downarrow \mathcal{B}$ consisting of morphisms $ l: B \rightarrow C$ such that there exists some $ k : K \rightarrow K'$ in $\mathcal{B}^2_\omega$ and $ a : K \rightarrow B$ exhibiting $ n$ as the pushout     
\[\begin{tikzcd}
	{K} & {K'} \\
	{B} & {C}
	\arrow["{a}"', from=1-1, to=2-1]
	\arrow["{k}", from=1-1, to=1-2]
	\arrow[from=1-2, to=2-2]
	\arrow["{l}"', from=2-1, to=2-2]
	\arrow["\lrcorner"{very near start, rotate=180}, from=2-2, to=1-1, phantom]
\end{tikzcd}\]
\end{definition}

The first thing which is clear about this category is the following:

\begin{lemma}
Objects of $\mathcal{G}_B$ are finitely presented in $B \downarrow \mathcal{B}$
\end{lemma}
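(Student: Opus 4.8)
The plan is to show directly that the covariant hom-functor $(B\downarrow\mathcal{B})[l,-] : B\downarrow\mathcal{B} \rightarrow \Set$ preserves filtered colimits, which is precisely the assertion that $l$ is finitely presented. First I would reduce the hom-sets to a manageable shape. Fix an object $g : B \rightarrow D$ of $B\downarrow\mathcal{B}$; a morphism $l \rightarrow g$ is an arrow $h : C \rightarrow D$ with $hl = g$. Since $l$ is, by hypothesis, the pushout of $k : K \rightarrow K'$ along $a : K \rightarrow B$, the universal property of the pushout identifies maps $h : C \rightarrow D$ with pairs $(u : B \rightarrow D, v : K' \rightarrow D)$ satisfying $ua = vk$, and the constraint $hl = g$ forces $u = g$. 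Hence I obtain a bijection, natural in $g$,
\[
(B\downarrow\mathcal{B})[l, g] \;\cong\; \{\, v : K' \rightarrow D \mid vk = ga \,\},
\]
namely the fiber of $\mathcal{B}[K', D] \xrightarrow{-\circ k} \mathcal{B}[K, D]$ over the point $ga$.

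Next I would organize this fiber as a pullback of three functors on $B \downarrow \mathcal{B}$, each of which is visibly finitary. Write $R(g) = \mathcal{B}[K', \cod g]$ and $S(g) = \mathcal{B}[K, \cod g]$, and let $T$ be the constant singleton functor. Precomposition with $k$ gives a natural transformation $R \Rightarrow S$, while $g \mapsto ga$ gives a natural transformation $T \Rightarrow S$; naturality of the latter uses exactly that a coslice morphism $\delta : g \rightarrow g'$ satisfies $\delta g = g'$, whence $\delta \circ (ga) = g'a$. Computed pointwise, the pullback of $T \Rightarrow S \Leftarrow R$ recovers precisely the fiber displayed above, so that $(B\downarrow\mathcal{B})[l, -]$ is this pullback.

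To conclude, recall that $\cod$ preserves filtered colimits (the corollary above), and that $K, K'$ are finitely presented in $\mathcal{B}$ because $k \in \mathcal{B}^2_\omega$ and $(\mathcal{B}^2)_\omega \simeq (\mathcal{B}_\omega)^2$. Therefore $R = \mathcal{B}[K', -]\circ \cod$ and $S = \mathcal{B}[K, -]\circ\cod$ are finitary, and $T$ is trivially finitary. Since finite limits commute with filtered colimits in $\Set$, the pullback $(B\downarrow\mathcal{B})[l, -]$ is again finitary, i.e. $l$ is finitely presented in $B\downarrow\mathcal{B}$.

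The main obstacle is the first step: correctly reading off the hom-set through the pushout and verifying that the resulting description is natural in $g$, since $\cod$ fails to preserve arbitrary colimits. Everything downstream hinges on isolating the equation $vk = ga$ as a genuine pullback of finitary functors and on the fact that $\cod$ is at least finitary. An alternative, more hands-on route would verify finite presentability elementwise — lifting a map $v : K' \rightarrow \colim_i D_i$ through some $D_i$ by presentability of $K'$, then using presentability of $K$ to force $v_i k = g_i a$ after a further transition map, and dually for uniqueness — but the pullback formulation makes commutation with filtered colimits transparent and sidesteps the diagram chase.
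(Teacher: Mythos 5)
Your proof is correct, but it takes a genuinely different route from the paper. The paper argues elementwise — exactly the ``hands-on'' alternative you mention at the end: given a map from the pushout into $\colim \cod F$, it first lifts the composite out of $K'$ through some $\cod F(i)$ using finite presentability of $K'$, then uses finite presentability of $K$ and filteredness of $I$ to equalize the two resulting parallel lifts out of $K$ by a further transition map $F(d)$, and finally invokes the universal property of the pushout to produce the lift of the original map; uniqueness of lifts is handled by the same pattern. Your argument instead packages the whole hom-functor at once: the pushout's universal property gives the natural identification $(B\downarrow\mathcal{B})[l,g]\cong\{\,v: K'\to \cod g \mid vk=ga\,\}$, which you exhibit as the pullback of the cospan $T\Rightarrow S\Leftarrow R$ with $R=\mathcal{B}[K',-]\circ\cod$, $S=\mathcal{B}[K,-]\circ\cod$ and $T$ constant; finitarity then follows from the corollary that $\cod$ preserves filtered colimits, the identification $(\mathcal{B}^2)_\omega\simeq(\mathcal{B}_\omega)^2$, and commutation of finite limits with filtered colimits in $\Set$. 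Both proofs rest on the same two inputs (preservation of filtered colimits by $\cod$ and finite presentability of $K$ and $K'$), but your formulation makes the existence and uniqueness clauses come for free in a single stroke, whereas the paper's diagram chase has the side benefit of rehearsing the lifting-and-refining technique that is reused heavily in the subsequent lemmas (notably \cref{Anel lemma} and \cref{lifting of diagrams}). One small point worth making explicit in your write-up: the isomorphism $(B\downarrow\mathcal{B})[l,-]\cong P$ being natural in $g$, the canonical comparison map $\colim_i(B\downarrow\mathcal{B})[l,F(i)]\to(B\downarrow\mathcal{B})[l,\colim_i F(i)]$ is identified with the comparison map for $P$, so finitarity of $P$ really does yield finite presentability of $l$ in the sense defined in the paper.
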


\begin{proof}
Let be $ F : I \rightarrow B \downarrow \mathcal{B}$ a filtered diagram; then form what was said about filtered colimits in the coslice, we have $ \cod ( \colim \, F) \simeq \colim \, \cod \, F$. Then for any situation as below 
\[\begin{tikzcd}
	K & {K'} \\
	B & {b_*K'} \\
	& {\colim\, \cod \,  F}
	\arrow["b"', from=1-1, to=2-1]
	\arrow["{b_*k}"{description}, from=2-1, to=2-2]
	\arrow["k", from=1-1, to=1-2]
	\arrow["{b'}", from=1-2, to=2-2]
	\arrow["\lrcorner"{anchor=center, pos=0.125, rotate=180}, draw=none, from=2-2, to=1-1]
	\arrow["{\colim\,   F}"', from=2-1, to=3-2]
	\arrow["a", from=2-2, to=3-2]
\end{tikzcd}\]
the composite arrow $ a k_*b : K' \rightarrow \colim \, \cod \, F$ lifts through some $ q_i : \cod \, F(i) \rightarrow \colim \, \cod \, F(i)$ as 
\[\begin{tikzcd}[sep=small]
	& {K'} \\
	{b_*K'} && {\cod\, F(i')} \\
	& {\colim\, \cod \,  F}
	\arrow["{b'}"{description}, from=1-2, to=2-1]
	\arrow["{q_i}", from=2-3, to=3-2]
	\arrow["{\overline{a}}", dashed, from=1-2, to=2-3]
	\arrow["a"{description}, from=2-1, to=3-2]
\end{tikzcd}\]
However, it is still not clear that the induced parallel lifts $ F(i)b : K \rightarrow F(i) $ and $ \overline{a}k : K \rightarrow F(i)$ commute: but from $K$ is finitely presented and $ I$ is filtered, we know they are equalized by some $ F(d)$ for some morphism $d : i \rightarrow i'$ in $I$, and moreover,  $F(i') = F(d)F(i)$; then the universal property of the pushout induces a universal map 
\[\begin{tikzcd}[sep=large]
	& {K'} \\
	K & {b_*K'} && {\cod\, F(i')} \\
	B \\
	& {\colim\, \cod \,  F}
	\arrow["b"', from=2-1, to=3-1]
	\arrow["{b_*k}"{description}, from=3-1, to=2-2]
	\arrow[""{name=0, anchor=center, inner sep=0}, "k", from=2-1, to=1-2]
	\arrow["{b'}"{description}, from=1-2, to=2-2]
	\arrow["{\colim\,   F}"', from=3-1, to=4-2]
	\arrow["{q_i}", from=2-4, to=4-2]
	\arrow["{F(d)\overline{a}}", curve={height=-12pt}, from=1-2, to=2-4]
	\arrow["a"{description}, from=2-2, to=4-2]
	\arrow["{F(i')}"{description, pos=0.7}, from=3-1, to=2-4, crossing over]
	\arrow["{\langle F(i'), F(d) \overline{a} \rangle}"{description}, dashed, from=2-2, to=2-4]
	\arrow["\lrcorner"{anchor=center, pos=0.125, rotate=180}, draw=none, from=2-2, to=0]
\end{tikzcd}\]
Similarly, we use finitely presentedness of $K'$ to prove that any two lifts of $a$ have to be equalized by a further refinement. This proves the pushout map $ b_*k$ to be finitely presented in $B \downarrow \mathcal{B}$
\end{proof}

Before being able to justify the appellation of generator, we must first establish a series of technical properties allowing us to lift arrow and finite diagrams from $B \downarrow\mathcal{B}$ to $ \mathcal{B}_\omega$. Those results will depend on the following useful technical lemma, which seems to have first appeared in \cite{Anel}[sub-lemma 12]:

\begin{lemma}\label{Anel lemma}
Let be a diagram as below
\[\begin{tikzcd}
	{K_0} & {K_0'} & K \\
	B & C
	\arrow["k", from=1-1, to=1-2]
	\arrow["n"', from=2-1, to=2-2]
	\arrow["a", from=1-3, to=2-2]
	\arrow["{a_0}"', from=1-1, to=2-1]
	\arrow[from=1-2, to=2-2]
	\arrow["\lrcorner"{anchor=center, pos=0.125, rotate=180}, draw=none, from=2-2, to=1-1]
\end{tikzcd}\]
with $ K,\,K_0, \, K_0' $ finitely presented and $ k $ in $\mathcal{B}^2_\omega$: then there exists a factorization 
\[\begin{tikzcd}
	{K_0} & {K_1} \\
	& B
	\arrow["{a_2}", from=1-1, to=1-2]
	\arrow["{a_1}", from=1-2, to=2-2]
	\arrow["{a_0}"', from=1-1, to=2-2]
\end{tikzcd}\]
such that $a$ factorizes through the following pushout
\[\begin{tikzcd}
	{K_0} & {K_0'} \\
	{K_1} & {{a_2}_*K_0'} & K \\
	B & C
	\arrow["k", from=1-1, to=1-2]
	\arrow["{b_0}"', from=1-1, to=2-1]
	\arrow["{a_1}"', from=2-1, to=3-1]
	\arrow[from=1-2, to=2-2]
	\arrow["{{a_2}_*k}", from=2-1, to=2-2]
	\arrow["n"', from=3-1, to=3-2]
	\arrow[from=2-2, to=3-2]
	\arrow["\lrcorner"{anchor=center, pos=0.125, rotate=180}, draw=none, from=2-2, to=1-1]
	\arrow["\lrcorner"{anchor=center, pos=0.125, rotate=180}, draw=none, from=3-2, to=2-1]
	\arrow["a", from=2-3, to=3-2]
	\arrow[dashed, from=2-3, to=2-2]
\end{tikzcd}\]
\end{lemma}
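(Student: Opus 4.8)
The plan is to realise $C$ as a filtered colimit of finitely presented objects of the shape ${a_2}_*K_0'$, indexed by the factorizations of $a_0$ through finitely presented objects, and then to lift $a$ through this colimit using that $K$ is finitely presented. First I would introduce the category $J$ of such factorizations: its objects are triples $(K_1, a_2 : K_0 \rightarrow K_1, a_1 : K_1 \rightarrow B)$ with $K_1$ finitely presented and $a_1a_2 = a_0$, and its morphisms are the maps $K_1 \rightarrow K_1'$ commuting with the $a_2$ and the $a_1$. This $J$ is exactly the coslice $(K_0,a_0)\downarrow(\mathcal{B}_\omega \downarrow B)$ of the canonical diagram $\mathcal{B}_\omega \downarrow B$ under the object $(K_0,a_0)$, which lies in $\mathcal{B}_\omega \downarrow B$ precisely because $K_0$ is finitely presented. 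Since $\mathcal{B}$ is locally finitely presentable, $\mathcal{B}_\omega \downarrow B$ is filtered with colimit $B$, and the projection $J \rightarrow \mathcal{B}_\omega \downarrow B$ is a coslice projection out of a filtered category, hence cofinal; therefore $J$ is filtered and the legs $a_1$ exhibit $B = \colim_{J} K_1$. Read in the coslice $K_0 \downarrow \mathcal{B}$, where filtered colimits are computed by the codomain functor (as established in the corollary above), this says $(B,a_0) = \colim_J (K_1,a_2)$.

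Next I would push this colimit forward along $k$. Cobase change along $k$ defines a functor $k_! : K_0 \downarrow \mathcal{B} \rightarrow K_0' \downarrow \mathcal{B}$ sending $(X,g)$ to the pushout of $k$ along $g$, and it is left adjoint to precomposition with $k$; in particular it preserves the filtered colimit above. Applying it to $(B,a_0) = \colim_J (K_1,a_2)$ yields $C = \colim_J {a_2}_*K_0'$ after reading codomains (the codomain functor again preserving filtered colimits), a filtered colimit whose colimiting cocone is the family of canonical maps ${a_2}_*K_0' \rightarrow C$ induced by $n a_1$ together with the structure map $K_0' \rightarrow C$. Each ${a_2}_*K_0'$ is a finite colimit of the finitely presented objects $K_0, K_0', K_1$, hence is itself finitely presented.

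Finally, since $K$ is finitely presented and $a : K \rightarrow C$ is an arrow into this filtered colimit, the existence part of finite presentedness provides an index $(K_1,a_2,a_1) \in J$ and a lift $\overline{a} : K \rightarrow {a_2}_*K_0'$ with $a$ equal to the composite of $\overline{a}$ with the cocone map ${a_2}_*K_0' \rightarrow C$; this selects the desired factorization $K_0 \xrightarrow{a_2} K_1 \xrightarrow{a_1} B$ of $a_0$. It then remains only to check that the lower square is a pushout, which is pure diagram pasting: the upper square is a pushout by construction of ${a_2}_*K_0'$, and the outer rectangle is the defining pushout square of $n$ (its left edge being $a_1a_2 = a_0$), so by the pasting law the lower square is a pushout as well.

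I expect the main obstacle to be the second paragraph, namely establishing the presentation $C = \colim_J {a_2}_*K_0'$ as a \emph{filtered} colimit with the correct colimiting cocone; this is precisely what makes the subsequent lift of $a$ legitimate. Once the interchange of the pushout with the filtered colimit is justified via the left adjoint $k_!$ and the preservation of filtered colimits by the codomain functor, the lifting and the pasting step are routine.
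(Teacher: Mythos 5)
Your argument is correct. Note that the paper does not actually prove this lemma: it only states it, attributing it to sub-lemma 12 of Anel's work, so there is no in-text proof to compare against, and your proposal in effect supplies the missing argument. The route you take is a natural one and every key step holds up: the coslice $J = (K_0,a_0)\downarrow(\mathcal{B}_\omega\downarrow B)$ of the filtered canonical diagram is filtered and its projection is final, so $B \simeq \colim_J K_1$ compatibly with the maps out of $K_0$; the cobase-change functor $k_! : K_0\downarrow\mathcal{B} \rightarrow K_0'\downarrow\mathcal{B}$ is left adjoint to precomposition with $k$ and hence preserves this filtered colimit, and composing with the codomain functor (which the paper has shown preserves filtered colimits out of coslices) yields $C \simeq \colim_J a_{2*}K_0'$ with exactly the cocone legs $\langle na_1, -\rangle : a_{2*}K_0' \rightarrow C$ appearing in the statement; finite presentability of $K$ then produces the lift, and the pasting law for pushouts gives the lower pushout square. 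Two minor remarks: the observation that each $a_{2*}K_0'$ is finitely presented is true but not needed for the lift (only filteredness of $J$ and finite presentability of $K$ are used); and your construction visibly also yields the refinement statement recorded in the remark following the lemma, since any two lifts of $a$ through a filtered colimit are coequalized by a further leg of the cocone.
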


\begin{remark}
Observe that from the properties of finitely presented objects, we can say moreover that for any two parallel lifts of the same $a$
\[\begin{tikzcd}
	{K_1} & {{a_2}_*K_0'} & K \\
	B & D
	\arrow["{a_1}"', from=1-1, to=2-1]
	\arrow["{{a_2}_*k}", from=1-1, to=1-2]
	\arrow["n"', from=2-1, to=2-2]
	\arrow[from=1-2, to=2-2]
	\arrow["\lrcorner"{anchor=center, pos=0.125, rotate=180}, draw=none, from=2-2, to=1-1]
	\arrow["a", from=1-3, to=2-2]
	\arrow["{m'}", shift left=1, from=1-3, to=1-2]
	\arrow["m"', shift right=1, from=1-3, to=1-2]
\end{tikzcd}\]
there exists a further factorization $ a_1=a_2b_1$ with $ a_2 : K_2 \rightarrow B$ such that $m$ and $m'$ are equalized by the intermediate arrow 
\[ (a_{2*}k)b_1 m = (a_{2*}k)b_1 m'  \]
\end{remark}

This lemma is also crucial to the following fullness-like property of the $ \mathcal{G}_B$ in the coslice, which allows to exhibit any arrow between objects of the etale generator as a pushout square of finitely presented etale map: 

\begin{lemma}\label{lifting of diagrams}
For any triangle in $ B \downarrow \mathcal{B}$
\[\begin{tikzcd}[sep=large]
	B & {C_1} \\
	&& {C_2}
	\arrow["{n_1}", from=1-1, to=1-2]
	\arrow["{n_2}"', from=1-1, to=2-3]
	\arrow["n", from=1-2, to=2-3]
\end{tikzcd}\]
with $ n_1$, $n_2$ in $\mathcal{G}_B$, there exists a triangle 
\[\begin{tikzcd}[sep=large]
	K & {K_1} \\
	&& {K_2}
	\arrow["{m_1}", from=1-1, to=1-2]
	\arrow["{m_2}"', from=1-1, to=2-3]
	\arrow["m", from=1-2, to=2-3]
\end{tikzcd}\]
in $\mathcal{B}^2_\omega$ and some $a :K \rightarrow B$ such that all squares below are pushouts
\[\begin{tikzcd}[sep=large]
	K & {K_1} \\
	B & {C_1} & {K_2} \\
	&& {C_2}
	\arrow["{m_1}", from=1-1, to=1-2]
	\arrow["m", from=1-2, to=2-3]
	\arrow["a"', from=1-1, to=2-1]
	\arrow["{a_1}"{description, pos=0.2}, from=1-2, to=2-2]
	\arrow["{a_2}", from=2-3, to=3-3]
	\arrow["{n_1}"{description}, from=2-1, to=2-2]
	\arrow["n"{description}, from=2-2, to=3-3]
	\arrow["{n_2}"', from=2-1, to=3-3]
	\arrow["\lrcorner"{anchor=center, pos=0.125, rotate=180}, draw=none, from=2-2, to=1-1]
	\arrow["\lrcorner"{anchor=center, pos=0.125, rotate=180}, draw=none, from=3-3, to=1-2]	\arrow["{m_2}"{description, pos=0.3}, from=1-1, to=2-3, crossing over]
\end{tikzcd}\]
(so that in particular $ n$ is in $\mathcal{G}_{C_1}$). 
\end{lemma}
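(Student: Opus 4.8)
The plan is to build the triangle by pairing one presentation of $n_1$ with a \emph{refined} presentation of $n_2$ extracted from \cref{Anel lemma}, the refinement being applied to the connecting map $n$. The point to keep in mind is that the two members $n_1,n_2$ of $\mathcal{G}_B$ come with \emph{a priori} unrelated finite data, and there is no reason for the codomain datum of $n_1$ to map compatibly into that of $n_2$; reconciling them is exactly what \cref{Anel lemma} will achieve, and this is where the difficulty lies.

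First I would fix a presentation of $n_1$: some $k_1 : P \to P'$ in $\mathcal{B}^2_\omega$ and $s : P \to B$ exhibiting $n_1$ as the pushout, with pushout leg $c : P' \to C_1$ (so that $c\,k_1 = n_1 s$). For $n_2$ I would \emph{not} use a presentation directly, but start from one (pushout of some $k_2 : M \to M'$ along $t : M \to B$) and feed it, together with the finitely presented map $n\,c : P' \to C_2$, into \cref{Anel lemma}. This produces a factorization $t = \alpha_1 \alpha_2$ through a finitely presented object $L$ and re-presents $n_2$ as the pushout of $l := {\alpha_2}_* k_2 : L \to R$ (where $R := L +_M M'$ is finitely presented) along $\alpha_1 : L \to B$, with leg $p : R \to C_2$, \emph{and} a map $\bar a : P' \to R$ realising the crucial identity $n\,c = p\,\bar a$.

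With this data I would set $K := P + L$, $a := [s,\alpha_1] : K \to B$, and
\[ m_1 := k_1 + 1_L : P + L \longrightarrow P' + L =: K_1, \qquad m := [\bar a,\, l] : P' + L \longrightarrow R =: K_2, \]
with $m_2 := m\,m_1$; all three lie in $\mathcal{B}^2_\omega$ since $P,P',L,M,M',R$ are finitely presented. The front-left square is then a pushout presenting $n_1$: pushing $k_1 + 1_L$ out along $[s,\alpha_1]$ glues $P'$ onto $B$ along $s$ while the identity summand $1_L$ contributes nothing, so the pushout is $C_1$ with induced leg $a_1 := [c,\, n_1\alpha_1] : K_1 \to C_1$.

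The heart of the matter is to show the front-right square — with top $m$, left $a_1$, bottom $n$ and right $p$ — is a pushout, which I would verify directly against the universal property of $C_2 = B +_L R$. Given $u : C_1 \to T$ and $v : R \to T$ with $u\,a_1 = v\,m$, restricting this equality to the $L$-summand of $K_1$ gives $(u\,n_1)\alpha_1 = v\,l$, so that $u\,n_1$ and $v$ induce a unique $t : C_2 \to T$ with $t\,n_2 = u\,n_1$ and $t\,p = v$. One then checks $t\,n = u$ by testing on the jointly epimorphic legs $n_1,c$ of $C_1$: the $n_1$-component is $t\,n\,n_1 = t\,n_2 = u\,n_1$, while the $c$-component reads $t\,n\,c = t\,p\,\bar a = v\,\bar a = u\,c$, where the middle equality is the Anel identity $n\,c = p\,\bar a$ and the last one is the restriction of $u\,a_1 = v\,m$ to the $P'$-summand. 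Uniqueness is immediate, so the square is a pushout and $n$ lies in $\mathcal{G}_{C_1}$; pasting the two front squares finally yields the outer square as the pushout of $m_2$ along $a$ with bottom $n\,n_1 = n_2$, as required. The main obstacle, as flagged, is to avoid the naive common refinement (the coproduct of the two original presentations), which fails precisely because $P'$ need not map compatibly into the finite data of $n_2$; invoking \cref{Anel lemma} on $n\,c$ is what manufactures the missing map $\bar a : P' \to R$ together with $n\,c = p\,\bar a$, and that single identity is exactly what closes the computation for the front-right square.
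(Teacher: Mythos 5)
Your proof is correct, and it reaches the conclusion by a genuinely different route than the paper's. Both arguments hinge on the same key step --- applying \cref{Anel lemma} to the composite $n\,c : P' \to C_2$ from the finitely presented codomain datum of $n_1$'s presentation into $C_2$, which is indeed the only way to manufacture the missing comparison map into the finite data of $n_2$ --- but they diverge in how they package the result. The paper first replaces both attaching maps by a common refinement in the filtered category $\mathcal{B}_\omega\downarrow B$, then insists on producing a single arrow $m'$ directly between the (refined) codomain data of the two presentations; since the map handed back by \cref{Anel lemma} need not commute with the rest of the finitely presented level of the diagram, this costs two further rounds of parallel-lift equalization before the conclusion can be extracted by cancellation of pushout squares. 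You sidestep the commutativity problem entirely: by taking $K := P + L$ with attaching map $[s,\alpha_1]$ and \emph{defining} $m_2 := m\,m_1$, the finitely presented triangle commutes by fiat, and the only thing left to check is that the middle square is a pushout --- which, since it is no longer obtained by cancellation, you verify directly against the universal property, using the joint epimorphicity of the legs of $C_1 = B +_P P'$ and $C_2 = B +_L R$ together with the single identity $n\,c = p\,\bar a$. Your computation there is sound (the restriction of $u\,a_1 = v\,m$ to the two summands of $P'+L$ supplies exactly the two compatibilities needed), and the resulting base $[s,\alpha_1] : P+L \to B$ is still an object of the filtered category $\mathcal{B}_\omega\downarrow B$, so nothing is lost for the later applications in \cref{lift of finite diagrams}. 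The trade-off is that the paper's version keeps all data expressed as iterated pushouts of the originally given presentations (at the price of heavy bookkeeping), whereas yours is shorter and more self-contained but requires the one explicit universal-property verification.
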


\begin{proof}
As $ n_1$ and $ n_2$ are supposed in $\mathcal{G}_B$, they are induced from pushouts as below
\[\begin{tikzcd}
	&& {K_1} && {K_1'} \\
	K && B && {C_1} \\
	&& {K_2'} && {C_2}
	\arrow["{m_1}", from=1-3, to=1-5]
	\arrow["{a_1}"', from=1-3, to=2-3]
	\arrow["{n_1}"{description}, from=2-3, to=2-5]
	\arrow["n", from=2-5, to=3-5]
	\arrow["{n_2}"{description}, from=2-3, to=3-5]
	\arrow["\lrcorner"{anchor=center, pos=0.125, rotate=180}, draw=none, from=2-5, to=1-3]
	\arrow["{a_2}", from=2-1, to=2-3]
	\arrow[""{name=0, anchor=center, inner sep=0}, from=3-3, to=3-5]
	\arrow["{m_2}"', from=2-1, to=3-3]
	\arrow[from=1-5, to=2-5]
	\arrow["\lrcorner"{anchor=center, pos=0.125, rotate=180}, shift right=4, draw=none, from=0, to=2-3]
\end{tikzcd}\]
Now by filteredness of $ \mathcal{B}_\omega\downarrow B$ there exists $ a_3 : K_3 \rightarrow B $ and a factorization
\[\begin{tikzcd}
	{K_1} \\
	{K_3} & B \\
	{K_2}
	\arrow["{b_1}"', from=1-1, to=2-1]
	\arrow["{a_3}"{description}, from=2-1, to=2-2]
	\arrow["{b_2}", from=3-1, to=2-1]
	\arrow["{a_2}"', from=3-1, to=2-2]
	\arrow["{a_1}", from=1-1, to=2-2]
\end{tikzcd}\]
and the pushouts themselves factorize
\[\begin{tikzcd}
	& {K_1} && {K_1'} \\
	{K_2} && {K_3} && {b_{1*}K_1'} \\
	& {K'_2} && {b_{2*}K_2'} \\
	&& B && {C_1} \\
	&&& {C_2}
	\arrow[""{name=0, anchor=center, inner sep=0}, "{a_3}"{description, pos=0.3}, from=2-3, to=4-3]
	\arrow["{b_{2*}m_2}"{description}, from=2-3, to=3-4]
	\arrow["{b_1}"{description}, from=1-2, to=2-3]
	\arrow["{m_1}", from=1-2, to=1-4]
	\arrow[from=1-4, to=2-5]
	\arrow["{b_{1*}m_1}"{description}, from=2-3, to=2-5]
	\arrow["{n_2}"', from=4-3, to=5-4]
	\arrow["{n_1}"{description, pos=0.3}, from=4-3, to=4-5]
	\arrow[from=2-5, to=4-5, " (b_{1*}m_1)_*a_3 "]
	\arrow[""{name=1, anchor=center, inner sep=0}, "{b_2}"{description}, from=2-1, to=2-3]
	\arrow["{m_2}"', from=2-1, to=3-2]
	\arrow["\lrcorner"{anchor=center, pos=0.125, rotate=180}, shift left=2, draw=none, from=2-5, to=1-4]
	\arrow["\lrcorner"{anchor=center, pos=0.125, rotate=180}, draw=none, from=4-5, to=2-3]
	\arrow["n", from=4-5, to=5-4]
	\arrow["\lrcorner"{anchor=center, pos=0.125, rotate=180}, draw=none, from=3-4, to=1]
	\arrow["\lrcorner"{anchor=center, pos=0.125, rotate=180}, draw=none, from=5-4, to=0]
	\arrow[from=3-2, to=3-4, crossing over]
	\arrow[from=3-4, to=5-4, crossing over]
\end{tikzcd}\]
But now we can apply \cref{Anel lemma} to the following situation (where $ b_{1*}$ still is finitely presented as $K_3$ is)
\[\begin{tikzcd}
	&& {b_{1*}K_1'} \\
	{K_3} & {b_{2*}K_2'} & {C_1} \\
	B & {C_2}
	\arrow["{a_3}"', from=2-1, to=3-1]
	\arrow["{b_{2*}m_2}", from=2-1, to=2-2]
	\arrow[from=2-2, to=3-2]
	\arrow["{n_2}"', from=3-1, to=3-2]
	\arrow["{(b_{1*}m_1)_*a_3 }", from=1-3, to=2-3]
	\arrow["n", from=2-3, to=3-2]
	\arrow["\lrcorner"{anchor=center, pos=0.125, rotate=180}, draw=none, from=3-2, to=2-1]
\end{tikzcd}\]
to exhibit a further factorization 
\[\begin{tikzcd}
	{K_3} && {b_{2*}K_2'} & {b_{1*}K_1'} \\
	{K_4} && {b_{3*}b_{2*}K_2'} & {C_1} \\
	B && {C_2}
	\arrow["{b_{2*}m_2}", from=1-1, to=1-3]
	\arrow["{n_2}"', from=3-1, to=3-3]
	\arrow["{(b_{1*}m_1)_*a_3 }", from=1-4, to=2-4]
	\arrow["n", from=2-4, to=3-3]
	\arrow["\lrcorner"{anchor=center, pos=0.125, rotate=180}, draw=none, from=3-3, to=1-1]
	\arrow["{a_4}"', from=2-1, to=3-1]
	\arrow["{b_3}"', from=1-1, to=2-1]
	\arrow["{b_{3*}b_{2*}m_2}"{description}, from=2-1, to=2-3]
	\arrow[from=1-3, to=2-3]
	\arrow[from=2-3, to=3-3]
	\arrow["m"{description}, dashed, from=1-4, to=2-3]
	\arrow["\lrcorner"{anchor=center, pos=0.125, rotate=180}, draw=none, from=2-3, to=1-1]
\end{tikzcd}\]
However, we cannot infer at this step that $m$ commutes with the other part of the diagram. Indeed, one cannot infer that $ m b_{1*}m_1$ and $ (b_{2*}m_2)_*b_{3*} b_{2*}m_2$ commute. However they are equalized by $ n_{2*}a_4$, which provides two parallel lifts of the same situation 
\[\begin{tikzcd}
	&&&& {K_3} \\
	&&&& {b_{1*}K_1'} \\
	{K_3} && {b_{2*}K_2'} \\
	& {K_4} && {b_{3*}b_{2*}K_2'} & {C_1} \\
	& B && {C_2}
	\arrow["{b_{2*}m_2}"{description}, from=3-1, to=3-3]
	\arrow["{n_2}"', from=5-2, to=5-4]
	\arrow["{(b_{1*}m_1)_*a_3 }", from=2-5, to=4-5]
	\arrow["n", from=4-5, to=5-4]
	\arrow["\lrcorner"{anchor=center, pos=0.125, rotate=180}, draw=none, from=5-4, to=3-1]
	\arrow["{a_4}"{description}, from=4-2, to=5-2]
	\arrow[from=4-4, to=5-4]
	\arrow["m"{description}, from=2-5, to=4-4]
	\arrow["{b_{1*}m_1}", from=1-5, to=2-5]
	\arrow["{a_3}"', from=3-1, to=5-2]
	\arrow["{(b_{2*}m_2)_*a_3}"{description, pos=0.3}, from=3-3, to=5-4]
	\arrow["{b_{2*}m_2}"{description}, from=1-5, to=3-3]	\arrow["{b_{3*}b_{2*}m_2}"{description}, from=4-2, to=4-4, crossing over]
	\arrow[curve={height=-18pt}, Rightarrow, no head, from=3-1, to=1-5]
\end{tikzcd}\]
so there exists a further factorization $ a_4 = a_5 b_4$ with $ a_5 : K_5 \rightarrow B$ such that 
\begin{align*}
    (b_{4*}b_{3*}b_{2*}m_2)_*b_4 m b_{1*}m_1 &=  (b_{4*}b_{3*}b_{2*}m_2)_*b_4 (b_{2*}m_2)_*b_{3*}b_{2*}m_2 \\
    &=b_{4*}b_{3*}b_{2*}m_2 b_4b_3
\end{align*}
And now by the universal property of the pushout we have an arrow $ m'$ as in the diagram below
\[\begin{tikzcd}[column sep=large]
	{K_3} &&& {b_{1*}K_1'} \\
	&& {b_{2*}K_2'} \\
	{K_5} &&& {b_{4*}b_{3*}b_{1*}K_1'} \\
	&& {b_{3*}b_{2*}K_2'} \\
	B &&& {C_1} \\
	&& {C_2}
	\arrow["{n_2}"', from=5-1, to=6-3]
	\arrow["n", from=5-4, to=6-3]
	\arrow["{a_5}"', from=3-1, to=5-1]
	\arrow["{b_{4*}b_{3*}b_{2*}m_2}"{description}, from=3-1, to=4-3]
	\arrow["{n_1}"{description, pos=0.3}, from=5-1, to=5-4]
	\arrow["{b_{4*}b_{3*}b_{1*}m_1}"{description, pos=0.3}, from=3-1, to=3-4]
	\arrow[from=3-4, to=5-4]
	\arrow["\lrcorner"{anchor=center, pos=0.125, rotate=180}, draw=none, from=5-4, to=3-1]
	\arrow["{b_4b_3}"', from=1-1, to=3-1]
	\arrow["{b_{2*}m_2}"{description}, from=1-1, to=2-3]
	\arrow["{b_{1*}m_1}", from=1-1, to=1-4]
	\arrow[from=1-4, to=3-4]
	\arrow["\lrcorner"{anchor=center, pos=0.125, rotate=180}, draw=none, from=4-3, to=1-1]
	\arrow["\lrcorner"{anchor=center, pos=0.125, rotate=180}, draw=none, from=3-4, to=1-1]
	\arrow["\lrcorner"{anchor=center, pos=0.125, rotate=180}, draw=none, from=6-3, to=3-1]	\arrow[from=2-3, to=4-3, crossing over]
	\arrow[from=4-3, to=6-3, crossing over]
	\arrow["{(b_{4*}b_{3*}b_{2*}m_2)_*b_4 m}"{description}, from=1-4, to=4-3, crossing over]
	\arrow["{m'}"{description}, dashed, from=3-4, to=4-3]
\end{tikzcd}\]
Combining stability under pushouts and right cancellation of maps in $ \mathcal{B}^2_\omega$, we know $ m'$ to be in $\mathcal{B}^2_\omega$; moreover, by right cancellation of pushout squares, the right, bottom square is also a pushout, so that $ n$ is exhibited as a pushout of $m'$ along the canonical inclusion $ (b_{3*}(b_{1*}m_1))_*a_3$.
\end{proof}

In particular, the following says we can lift any finite diagram in the etale generator of $B$ into a diagram of the same shape made of finitely presented etale arrows, from which it can be induced by pushout:

\begin{lemma}\label{lift of finite diagrams}
For any finite diagram $ F :  I \rightarrow \mathcal{G}_B$, there is some $ a : K \rightarrow B$ and some lifts 
\[\begin{tikzcd}
	& {\mathcal{G}_K} \\
	I & {\mathcal{G}_B}
	\arrow["{a_*}", from=1-2, to=2-2]
	\arrow[""{name=0, anchor=center, inner sep=0}, "{\overline{F}}", dashed, from=2-1, to=1-2]
	\arrow["F"', from=2-1, to=2-2]
	\arrow["\simeq"{description}, Rightarrow, draw=none, from=0, to=2-2]
\end{tikzcd}\]
where $ a_*$ is the pushout functor. In particular the transition morphisms of $ F$ are obtained as pushouts of the corresponding transition morphisms of $\overline{F}$. 
\end{lemma}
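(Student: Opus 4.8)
The plan is to lift $F$ in three successive stages --- its objects, then its morphisms, then the relations among them --- absorbing at each stage the finitely many choices forced by the finiteness of $I$ into a single finitely presented object $K$ equipped with a map $a : K \to B$. The whole argument rests on two facts already in play in \cref{lifting of diagrams}: the category $\mathcal{B}_\omega \downarrow B$ is filtered (so any finite family of finitely presented objects over $B$ admits a common upper bound, and any finite family of equations between maps out of a finitely presented object that hold in the colimit can be forced at a finite stage), and pushing out along a finitely presented approximation commutes with the filtered colimit $B \simeq \colim_{\mathcal{B}_\omega \downarrow B}$.

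First I would lift the objects. For each $i$, membership $F(i) = n_i \in \mathcal{G}_B$ gives a presentation of $n_i$ as a pushout of some $k_i : K_i \to K_i'$ in $\mathcal{B}^2_\omega$ along some $a_i : K_i \to B$. As $I$ has finitely many objects, filteredness of $\mathcal{B}_\omega \downarrow B$ provides a single $a : K \to B$ with $K$ finitely presented together with factorizations $a_i = a c_i$. Setting $\overline{n_i} := c_{i*} k_i$ yields an object of $\mathcal{G}_K$ with $a_* \overline{n_i} \simeq n_i$ by pasting of pushout squares; moreover each $\overline{C_i} := \cod \overline{n_i}$ is finitely presented, and $n_i = a_* \overline{n_i}$ exhibits $C_i$ as the filtered colimit, over factorizations $K \to K'' \to B$ of $a$ through finitely presented objects, of the pushouts of $\overline{n_i}$ along $K \to K''$.

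Next I would lift the morphisms. Given $d : i \to j$ with underlying map $f_d : C_i \to C_j$ (so $f_d n_i = n_j$), restricting $f_d$ along the pushout leg $K_i' \to C_i$ gives a map $K_i' \to C_j$; since $K_i'$ is finitely presented and $C_j$ is the filtered colimit just described, this map lifts to a finite stage. There are finitely many $d$, so a single enlargement of $K$ (a common upper bound in $\mathcal{B}_\omega \downarrow B$) makes all of them lift to the corresponding $\overline{C_j}$. Together with $\overline{n_j} : K \to \overline{C_j}$, each such lift defines, by the universal property of $\overline{C_i}$ as a pushout, a candidate $\overline{F}(d)$, once one checks that the two legs agree on $K_i$; they agree after the colimit injection $\overline{C_j} \to C_j$, so by finite presentability of $K_i$ and finiteness of $I$ a further common enlargement of $K$ equalizes them. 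By construction $\overline{F}(d)$ is then a morphism of $\mathcal{G}_K$ with $\overline{F}(d)\,\overline{n_i} = \overline{n_j}$ and $a_* \overline{F}(d) = f_d$.

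Finally I would force functoriality. For each of the finitely many composable pairs and identities of $I$, the two resulting composites out of some $\overline{C_i}$ agree after the injection into the relevant codomain, because $F$ is itself a functor; as the $\overline{C_i}$ are finitely presented, one last common enlargement of $K$ makes every such equation hold on the nose. After these finitely many enlargements we obtain $a : K \to B$ and a genuine functor $\overline{F} : I \to \mathcal{G}_K$ with $a_* \overline{F} \simeq F$, whence the transition maps of $F$ are the pushouts along $a$ of those of $\overline{F}$. The hard part is exactly this bookkeeping: every lift and every equation is only available at some finite stage, and no a priori bound guarantees a common one --- it is solely the interplay of the finiteness of $I$ (finitely many constraints) with the filteredness of $\mathcal{B}_\omega \downarrow B$ (a common solution to finitely many constraints) that lets all the required compatibilities be realized over one and the same $K$. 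Alternatively, one could feed each generating arrow of $I$ into \cref{lifting of diagrams} to obtain its lift with full pushout data, and then run the same filtered bookkeeping to reconcile the resulting bases and enforce the relations.
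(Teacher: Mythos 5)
Your proposal is correct, but it follows a genuinely different route from the paper's. The paper does not treat an arbitrary finite $I$ uniformly: it lifts finite \emph{discrete} diagrams by filteredness of $\mathcal{B}_\omega\downarrow B$ and then lifts \emph{parallel pairs} by applying \cref{lifting of diagrams} to each arrow separately, obtaining two pushout presentations over different bases, and then reconciling them over a common refinement by two further applications of \cref{Anel lemma} plus an equalization step --- these being exactly the shapes needed for the subsequent corollary on finite colimits. You instead fix a single base $a:K\to B$ for all the objects at the outset, reconstruct each transition map $f_d$ from its restriction to the finitely presented leg $K_i'$ via the universal property of the pushout and the filtered decomposition $C_j\simeq \colim_{K''}(K''+_K\overline{C_j})$, and then force the triangle identities, the equations $a_*\overline{F}(d)=f_d$, and functoriality by finitely many further enlargements of $K$. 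Both arguments rest on the same two pillars (filteredness of $\mathcal{B}_\omega\downarrow B$ and commutation of pushout with the filtered colimit presentation of $B$), and your verification that the two legs into $\overline{C_j}$ agree after the injection into $C_j$, hence at a finite stage, is exactly the point where the paper instead invokes \cref{Anel lemma}. What each approach buys: the paper's reuses the already-established single-morphism lemma and stays close to the generating cases it actually needs later, while yours is uniform in the shape of $I$ and proves the statement as literally written for an arbitrary finite diagram (including composites and identities), which the paper's case analysis only covers implicitly.
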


\begin{proof}
We saw that one can lift morphisms. Here we prove that one can lift finite discrete diagrams and parallel pairs. Let be a discrete set $ (n_i : B \rightarrow C_i)_{i \in I}$ with $I$ finite, with $ n_i = a_{i*}k_i$ and $ k_i : K_i \rightarrow K_i'$. Then, for $ \mathcal{B}_{\omega}\downarrow B$ is filtered, there exists some $a : K \rightarrow B$ and for each $ i \in I$ an arrow $ b_i : K_i \rightarrow K$ such that $ a_i = a b_i $; then in $ \mathcal{G}_K$ one gets the following diagram over $ (n_i)_{i \in I}$
\[\begin{tikzcd}
	& {b_{i*}K_i'} \\
	K && {b_{j*}K_j'}
	\arrow["{b_{i*}k_i}", from=2-1, to=1-2]
	\arrow["{b_{j*}k_j}"', from=2-1, to=2-3]
	\arrow["\ddots"{description}, draw=none, from=1-2, to=2-3]
\end{tikzcd}\]
Now consider a parallel pair 
\[\begin{tikzcd}
	& {C_1} \\
	B && {C_2}
	\arrow["{n_1}", from=2-1, to=1-2]
	\arrow["{n_2}"', from=2-1, to=2-3]
	\arrow["f'"', shift right=1, from=1-2, to=2-3]
	\arrow["f", shift left=1, from=1-2, to=2-3]
\end{tikzcd}\]
Then from \cref{lifting of diagrams} there are respectively two lifts 
\[\begin{tikzcd}[row sep=small]
	& {K_1} \\
	K && {K_2} \\
	& {C_1} \\
	B && {C_2}
	\arrow["{n_1}"{description}, from=4-1, to=3-2]
	\arrow["{n_2}"', from=4-1, to=4-3]
	\arrow["f"{description}, from=3-2, to=4-3]
	\arrow["a"', from=2-1, to=4-1]
	\arrow[from=2-3, to=4-3]
	\arrow["{k_1}", from=2-1, to=1-2]
	\arrow["l", from=1-2, to=2-3]
	\arrow[""{name=0, anchor=center, inner sep=0}, from=1-2, to=3-2]
	\arrow["\lrcorner"{anchor=center, pos=0.125, rotate=180}, draw=none, from=3-2, to=2-1]
	\arrow["{k_2}"{description, pos=0.3}, from=2-1, to=2-3, crossing over]
	\arrow["\lrcorner"{anchor=center, pos=0.125, rotate=180}, draw=none, from=4-3, to=0]
\end{tikzcd} \hskip 1cm \begin{tikzcd}[row sep=small]
	& {K_1'} \\
	{K'} && {K_2'} \\
	& {C_1} \\
	B && {C_2}
	\arrow["{n_1}"{description}, from=4-1, to=3-2]
	\arrow["{n_2}"', from=4-1, to=4-3]
	\arrow["{f'}"{description}, from=3-2, to=4-3]
	\arrow["{a'}"', from=2-1, to=4-1]
	\arrow[from=2-3, to=4-3]
	\arrow["{k_1'}", from=2-1, to=1-2]
	\arrow["{l'}", from=1-2, to=2-3]
	\arrow[""{name=0, anchor=center, inner sep=0}, from=1-2, to=3-2]
	\arrow["\lrcorner"{anchor=center, pos=0.125, rotate=180}, draw=none, from=3-2, to=2-1]
	\arrow["{k_2'}"{description, pos=0.3}, from=2-1, to=2-3, crossing over]
	\arrow["\lrcorner"{anchor=center, pos=0.125, rotate=180}, draw=none, from=4-3, to=0]
\end{tikzcd}\]
Now one can find a common refinement $ a'' : K'' \rightarrow B$, $b: K \rightarrow K''$ and $ b' : K' \rightarrow K''$ of $ a$, $a'$. Moreover, by applying upstream \cref{Anel lemma} we can chose this common refinement to be such that there exists a factorization 
\[\begin{tikzcd}
	& {K'} && {K_1'} \\
	{K''} && {b_*K'} \\
	B && {C_1}
	\arrow["{n_1}"{description}, from=3-1, to=3-3]
	\arrow["{k_1'}", from=1-2, to=1-4]
	\arrow[from=1-4, to=3-3]
	\arrow["\lrcorner"{anchor=center, pos=0.125, rotate=180}, draw=none, from=3-3, to=1-2]
	\arrow["a''"', from=2-1, to=3-1]
	\arrow[from=2-3, to=3-3]
	\arrow["{a'}"{description, pos=0.3}, from=1-2, to=3-1]
	\arrow["{b'}"', from=1-2, to=2-1]
	\arrow["c"', dashed, from=1-4, to=2-3]
	\arrow["b_*k_1 "{description}, from=2-1, to=2-3, crossing over]
\end{tikzcd}\]
Now we can push the arrow $ l'$ along $ c$ to get a diagram as below
\[\begin{tikzcd}[column sep=large]
	&&& {K'_1} \\
	&& {b_*K_1} &&& {K'_2} \\
	{K''} &&& {b_*K_2} & {c_*K_2'} \\
	&& {C_1} \\
	B &&& {C_2}
	\arrow["{b_*k_1}", from=3-1, to=2-3]
	\arrow["{c_*l'}", from=2-3, to=3-5]
	\arrow["{b_*l}"{description}, from=2-3, to=3-4]
	\arrow["{a''}"', from=3-1, to=5-1]
	\arrow["{n_2}"', from=5-1, to=5-4]
	\arrow["{(b_*k_2)_*a''}"{description, pos=0.3}, from=3-4, to=5-4]
	\arrow["c"', from=1-4, to=2-3]
	\arrow["{l'}", from=1-4, to=2-6]
	\arrow[from=2-6, to=3-5]
	\arrow["\lrcorner"{anchor=center, pos=0.125, rotate=180}, draw=none, from=3-5, to=1-4]
	\arrow["{(b_*k_1)_*a''}"{description, pos=0.8}, from=2-3, to=4-3]
	\arrow["{n_1}"{description}, from=5-1, to=4-3]
	\arrow["f"', shift right=1, from=4-3, to=5-4]
	\arrow[" \scriptsize{(c_*l')_*((b_*k_1)_*a'')}", from=3-5, to=5-4]
	\arrow["{f'}", shift left=1, from=4-3, to=5-4]
	\arrow["{b_*k_2}"{description}, from=3-1, to=3-4, crossing over]
	\arrow["\lrcorner"{anchor=center, pos=0.125, rotate=180}, draw=none, from=4-3, to=3-1]
	\arrow["\lrcorner"{anchor=center, pos=0.2, rotate=180}, shift left=1, draw=none, from=5-4, to=3-1]
\end{tikzcd}\]
where, by cancellation of pushouts, we have that 
\[ f' = ((b_*k_1)_*a'')_*(c_*l') \]
Finally, 
again by \cref{Anel lemma}, we can get a last factorization $ a''' : K''' \rightarrow B$ and $ b'': K'' \rightarrow K'''$ and a factorization as below
\[\begin{tikzcd}[column sep=large]
	&& {b_*K_1} \\
	{K''} &&& {b_*K_2} & {c_*K_2'} \\
	&& {b''_*b_*K_1} \\
	{K'''} &&& {b''_*b_*K_2} \\
	&& {C_1} \\
	B &&& {C_2}
	\arrow["{b_*k_1}", from=2-1, to=1-3]
	\arrow["{c_*l'}", from=1-3, to=2-5]
	\arrow["{b_*l}"{description}, from=1-3, to=2-4]
	\arrow["{n_2}"', from=6-1, to=6-4]
	\arrow["{n_1}"{description}, from=6-1, to=5-3]
	\arrow["f"', shift right=1, from=5-3, to=6-4]
	\arrow["{f'}", shift left=1, from=5-3, to=6-4]
	\arrow["\lrcorner"{anchor=center, pos=0.125, rotate=180}, draw=none, from=5-3, to=2-1]
	\arrow["{b''}"', from=2-1, to=4-1]
	\arrow["{a'''}"', from=4-1, to=6-1]
	\arrow["{(b_*k_1)_*b''}"{description, pos=0.7}, from=1-3, to=3-3]
	\arrow["{b''_*b_*k_1}"{description}, from=4-1, to=3-3]
	\arrow["d"{description}, dashed, from=2-5, to=4-4]
	\arrow["{(b_*k_2)_*a''}"{description, pos=0.3}, from=2-4, to=4-4]
	\arrow["{\scriptsize((b_*k_1)_*b'')_*b_*l}"{description}, from=3-3, to=4-4]
	\arrow["{b''_*b_*k_2}"{description, pos=0.4}, from=4-1, to=4-4, crossing over]
	\arrow[from=4-4, to=6-4]
	\arrow[from=3-3, to=5-3]
	\arrow["{b_*k_2}"{description, pos=0.4}, from=2-1, to=2-4, crossing over]
	\arrow["{(c_*l')_*((b_*k_1)_*a'')}", curve={height=-12pt}, from=2-5, to=6-4]
\end{tikzcd}\]
But now, the pair $(dc_*l',b''_*b_*k_2) $ induces a unique arrow 
\[\begin{tikzcd}
	{b''_*b_*K_1} && {b''_*bK_2}
	\arrow["{\langle dc_*l',b''_*b_*k_2 \rangle}", from=1-1, to=1-3]
\end{tikzcd}\]
 which moreover satisfies $ \langle dc_*l',b''_*b_*k_2 \rangle b''_*b_*k_1  = b''_*b_*k_2$, so that by cancellation of pushouts together with the pushout expression of $n_1$, $n_2$, we have 
\[ f' = ((b''_*b_*k_1)_*a''')_* \langle dc_*l',b''_*b_*k_2 \rangle \]
Hence the parallel pair in $ \mathcal{G}_{k'''}$
\[\begin{tikzcd}
	{b''_*b_*k_1} && {b''_*b_*k_2}
	\arrow["{\scriptsize((b_*k_1)_*b'')_*b_*l}"'{pos=0.4}, shift right=1, from=1-1, to=1-3]
	\arrow["{\langle dc_*l',b''_*b_*k_2 \rangle}", shift left=1, from=1-1, to=1-3]
\end{tikzcd}\]
is a lift of the parallel pair $f,f'$ as desired.
\end{proof}

\begin{corollary}
The coslice generator $ \mathcal{G}_B$ is closed under finite colimits in the cocomma $B\downarrow\mathcal{B}$. 
\end{corollary}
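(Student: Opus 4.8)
The plan is to reduce, via the lifting lemma \cref{lift of finite diagrams}, any finite colimit in $\mathcal{G}_B$ to a finite colimit computed at the level of finitely presented data, where it is automatically well behaved, and then to transport the result back along a pushout functor, which, being a left adjoint, preserves colimits. First I would note that since $\mathcal{B}$ is locally finitely presentable it is cocomplete, so $B \downarrow \mathcal{B}$ has all finite colimits; the only issue is to check that the colimit of a diagram valued in $\mathcal{G}_B$ again lands in $\mathcal{G}_B$. So fix a finite diagram $F : I \rightarrow \mathcal{G}_B$. By \cref{lift of finite diagrams} there is an object $a : K \rightarrow B$ of $\mathcal{B}_\omega \downarrow B$ together with a lift $\overline{F} : I \rightarrow \mathcal{G}_K$ and a natural isomorphism $F \simeq a_* \overline{F}$, where $a_* : K \downarrow \mathcal{B} \rightarrow B \downarrow \mathcal{B}$ denotes the pushout functor along $a$.

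Next I would exploit the fact that $a_*$ is a left adjoint: it is left adjoint to the precomposition functor $a^* : B \downarrow \mathcal{B} \rightarrow K \downarrow \mathcal{B}$ sending $l : B \rightarrow C$ to $la : K \rightarrow C$, as the universal property of the pushout shows directly (a map under $B$ out of the pushout $B +_K C$ is the same as a map $C \rightarrow D$ compatible with $a$). Hence $a_*$ preserves all colimits, so that $\colim_I F \simeq a_*(\colim_I \overline{F})$. It therefore suffices to prove that $m := \colim_I \overline{F}$, computed in $K \downarrow \mathcal{B}$, is carried by $a_*$ into $\mathcal{G}_B$, and for this it is enough to check that $m$ is an object of $\mathcal{B}^2_\omega$.

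To see this, observe that because $K$ is finitely presented, every object of $\mathcal{G}_K$ has finitely presented codomain: an object $K \rightarrow C$ of $\mathcal{G}_K$ is a pushout of some $k : K_0 \rightarrow K_0'$ in $\mathcal{B}^2_\omega$ along a map $K_0 \rightarrow K$, so its codomain $C \simeq K +_{K_0} K_0'$ is a finite colimit of finitely presented objects, hence finitely presented. Computing $m$ in $K \downarrow \mathcal{B}$ as recalled in Section 1, its codomain is the colimit of the diagram $\cod \circ \overline{F}$ extended by a cone with vertex $K$ over the finite index category; this is again a finite colimit of finitely presented objects (the $\cod \, \overline{F}(i)$ and the extra vertex $K$), hence finitely presented. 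Thus $m : K \rightarrow \cod(m)$ is a morphism between finitely presented objects, i.e. an object of $\mathcal{B}^2_\omega$, and $a_*(m)$ is by definition the pushout of this $\mathcal{B}^2_\omega$-map along $a : K \rightarrow B$ — exactly the shape witnessing membership in $\mathcal{G}_B$. Combining, $\colim_I F \simeq a_*(m) \in \mathcal{G}_B$.

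Granting \cref{lift of finite diagrams}, which is the genuinely delicate ingredient and is already established, the remaining work is light: the two things to get right are the identification of $a_*$ as a colimit-preserving left adjoint and the bookkeeping showing that the lifted colimit stays among finitely presented objects. The one pitfall I would flag is the temptation to identify the coslice colimit with the colimit of the bare diagram of codomains; the correct object is the \emph{corrected} colimit recalled in Section 1, but since that correction only adjoins the single finitely presented vertex $K$ to an already finite diagram of finitely presented objects, finite presentability of the codomain of $m$ follows at once.
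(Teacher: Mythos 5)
Your proposal is correct and follows essentially the same route as the paper: lift the finite diagram to $\mathcal{G}_K$ via \cref{lift of finite diagrams}, observe that for $K$ finitely presented the colimit computed there stays in $\mathcal{B}^2_\omega$, and transport back along the colimit-preserving pushout functor $a_*$. Your explicit identification of $a_*$ as left adjoint to precomposition, and your careful check that the corrected coslice colimit has finitely presented codomain, are just slightly more detailed justifications of the steps the paper phrases as ``commutation of pushouts with colimits'' and ``$\mathcal{G}_K$ is closed under finite colimits in $\mathcal{B}^2_\omega$''.
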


\begin{proof}
This is a consequence of the previous results, as we saw we can lift both finite discrete diagrams and parallel pairs to some $ \mathcal{G}_K$, where we can compute the corresponding finite colimit as in $\mathcal{B}^2_\omega$. Formally, for any finite diagram $  F :  I \rightarrow \mathcal{G}_B$, choose a lift $ \overline{ F} : I \rightarrow \mathcal{G}_K$ as provided by \cref{lift of finite diagrams}. As $K$ is finitely presented, $ \mathcal{G}_K$ is a full subcategory of $\mathcal{B}^2_\omega$ closed under finite colimits in $\overrightarrow{\mathcal{B}_\omega}$, so that we have in each $i$ in $I$ a diagram in $\mathcal{B}^2_\omega$
\[\begin{tikzcd}
	& {\cod (\overline{F}(i))} \\
	K && {\cod(\colim\, \overline{F})}
	\arrow["{\colim\, \overline{F}}"', from=2-1, to=2-3]
	\arrow["{\overline{F}(i)}", from=2-1, to=1-2]
	\arrow["{\overline{q}_i}", from=1-2, to=2-3]
\end{tikzcd}\]
so that the colimit inclusions $ \overline{q}_i$ of $ \overline{F}$ are in $\mathcal{B}^2_\omega$. Then by commutation of pushouts with colimits, we have 
\[ a_*\colim\, \overline{F} \simeq \colim \, a_*\overline{F} \simeq \colim\, F \]
Hence for each $ i$ in $I$ we have a diagram as below
\[\begin{tikzcd}
	& {\cod (\overline{F}(i))} \\
	K && {\cod(\colim\, \overline{F})} \\
	& {F(i)} \\
	B && {\cod(\colim\, F)}
	\arrow["{\overline{F}(i)}", from=2-1, to=1-2]
	\arrow["{\overline{q}_i}", from=1-2, to=2-3]
	\arrow["a"', from=2-1, to=4-1]
	\arrow["{\overline{F}(i)_*a}"{description, pos=0.8}, from=1-2, to=3-2]
	\arrow[from=2-3, to=4-3]
	\arrow["{F(i)}"{description}, from=4-1, to=3-2]
	\arrow["\lrcorner"{anchor=center, pos=0.125, rotate=180}, draw=none, from=3-2, to=2-1]
	\arrow["{q_i}"{description}, from=3-2, to=4-3]
	\arrow["{\colim\, F}"', from=4-1, to=4-3]
	\arrow["\lrcorner"{anchor=center, pos=0.125, rotate=180}, shift right=2, draw=none, from=4-3, to=2-1]	\arrow["{\colim\, \overline{F}}"{description, pos=0.3}, from=2-1, to=2-3, crossing over]
\end{tikzcd}\]
where the front square is a pushout, exhibiting $ \colim\, F$ as an object of $ \mathcal{G}_B$. Moreover, the colimit inclusions $ q_i$ are obtained as the pushouts 
\[ q_i = (\overline{F}(i)_*a)_*(\overline{q}_i) \]
All of this suffice to proves that $ \mathcal{G}_B$ is finitely cocomplete and closed under finite colimits in $B \downarrow\mathcal{B}$. 
\end{proof}

\begin{lemma}\label{closed under retract}
$\mathcal{G}_B$ is closed under the formation of retracts
\end{lemma}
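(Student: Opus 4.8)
The plan is to reduce closure under retracts to the closure under finite colimits already established in the previous Corollary, by exhibiting any retract as a coequalizer. Concretely, suppose that an object $l : B \rightarrow C$ of $B \downarrow \mathcal{B}$ is a retract of an object $m$ of $\mathcal{G}_B$, witnessed by coslice morphisms $s : l \rightarrow m$ and $r : m \rightarrow l$ with $rs = 1_l$. Set $e := sr : m \rightarrow m$, an idempotent endomorphism of $m$ in $B \downarrow \mathcal{B}$. The key observation I would exploit is that $l$, together with the map $r : m \rightarrow l$, is the coequalizer of the parallel pair $(1_m, e)$ taken in $B \downarrow \mathcal{B}$.

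First I would record why coequalizers are available and why this particular coequalizer recovers $l$. Since $\mathcal{B}$ is locally finitely presentable it is cocomplete, so by the Corollary on coequalizers the coslice $B \downarrow \mathcal{B}$ has coequalizers. The identity $l \simeq \coeq(1_m, e)$ is then purely formal and holds in any category in which $e$ splits: the map $r$ coequalizes the pair because $re = r(sr) = (rs)r = r = r\,1_m$, and it is universal because any $h : m \rightarrow X$ with $he = h$ factors uniquely as $h = (hs)\,r$, uniqueness following from the cancellation $h' = h'(rs) = (h'r)s$ whenever $h'r = h$. Thus no colimit need be computed by hand; the splitting data alone identify $l$ with the coequalizer.

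It then remains only to observe that the parallel pair $(1_m, e) : m \rightrightarrows m$ is a diagram valued in $\mathcal{G}_B$, since both its objects equal $m \in \mathcal{G}_B$ and its two arrows are morphisms of $B \downarrow \mathcal{B}$. A coequalizer is a particular finite colimit, so the previous Corollary, asserting closure of $\mathcal{G}_B$ under finite colimits in $B \downarrow \mathcal{B}$, applies and places $\coeq(1_m, e) \simeq l$ in $\mathcal{G}_B$, as desired.

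I do not expect a genuine obstacle here: unlike the earlier lemmas, this step requires no fresh lifting argument into $\mathcal{B}^2_\omega$, all the real work having been carried out in establishing closure under finite colimits. The only points demanding a little care are making sure the coequalizer is computed in the coslice $B \downarrow \mathcal{B}$ rather than naively in $\mathcal{B}$, where $\cod$ would ``correct'' it, and confirming that the previous Corollary's proof indeed covers parallel pairs, which it does, since it explicitly lifts parallel pairs to some $\mathcal{G}_K$ and coequalizes them there.
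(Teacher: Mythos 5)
Your proof is correct, but it is not the route the paper takes, and the paper itself comments on exactly this point in the remark following the lemma. Your argument is the purely formal one: a split idempotent is an absolute coequalizer, so $l \simeq \coeq(1_m, sr)$ with $r$ as the coequalizing map, the parallel pair $(1_m, sr)$ lives in the full subcategory $\mathcal{G}_B$, and the preceding corollary on closure under finite colimits (whose proof does explicitly handle parallel pairs via \cref{lift of finite diagrams}) finishes the job; there is no circularity, since that corollary is established independently of the retract lemma. The paper instead runs a direct argument: it uses \cref{Anel lemma} to lift the idempotent $e$ on $a_*K'$ to an endomorphism of a finitely presented pushout $b_{0*}K'$, takes the coequalizer of that lift against the identity in $\mathcal{B}^2_\omega$, and uses commutation of coequalizers with pushouts to identify the retract with a pushout of a finitely presented map. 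What the paper's longer route buys is an explicit pushout presentation of the retract (useful when applying the result in concrete situations, as the subsequent remark explains); what your route buys is brevity and the clean conceptual point that closure under finite colimits already forces closure under retracts. Both are valid; your only obligations are the ones you already flagged, namely that the coequalizer is taken in $B \downarrow \mathcal{B}$ and that the corollary covers parallel pairs, and both check out.
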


\begin{proof}
 The crucial argument of this proof will be a lift endomorphisms to endomorphisms. Let be $ l : B \rightarrow C$  be a retract of a pushout $ a_*k$ as below
\[\begin{tikzcd}[column sep=large]
	&& {K'} \\
	K \\
	& C & {a_*K'} \\
	B && C
	\arrow["l"{description}, from=4-1, to=3-2]
	\arrow["a"', from=2-1, to=4-1]
	\arrow["k", from=2-1, to=1-3]
	\arrow[from=1-3, to=3-3]
	\arrow["\lrcorner"{anchor=center, pos=0.125, rotate=180}, draw=none, from=3-3, to=2-1]
	\arrow[Rightarrow, no head, from=3-2, to=4-3]
	\arrow["s", from=3-2, to=3-3]
	\arrow["r"', from=3-3, to=4-3]
	\arrow["{l'}"{description}, from=4-1, to=4-3]
	\arrow["{a_*k}"{description}, from=4-1, to=3-3, crossing over]
\end{tikzcd}\]
Then consider the corresponding idempotent $e$ in $B \downarrow \mathcal{B} $
\[\begin{tikzcd}[row sep=large]
	& {a_*K'} && {a_*K'} \\
	B && C
	\arrow["r"{description, pos=0.2}, from=1-2, to=2-3]
	\arrow["{l'}"{description}, from=2-1, to=2-3]
	\arrow["{a_*k}"{description}, from=2-1, to=1-2]
	\arrow["{a_*k}"{description, pos=0.3}, from=2-1, to=1-4, crossing over]
	\arrow["s"', from=2-3, to=1-4]
	\arrow["e"{description}, from=1-2, to=1-4]
\end{tikzcd}\]
Then by \cref{Anel lemma} we know there exists a factorization of pushout squares 
\[\begin{tikzcd}
	K && {K'} & {K'} \\
	{K_0} && {b_{0*}K'} & a_*K' \\
	B && {a_*K'}
	\arrow["{a_*k}"', from=3-1, to=3-3]
	\arrow["{a_0}"', from=2-1, to=3-1]
	\arrow["{b_{0*}k}"{description}, from=2-1, to=2-3]
	\arrow[from=2-3, to=3-3]
	\arrow["{b_0}"', from=1-1, to=2-1]
	\arrow["k", from=1-1, to=1-3]
	\arrow[from=1-3, to=2-3]
	\arrow["\lrcorner"{anchor=center, pos=0.125, rotate=180}, draw=none, from=2-3, to=1-1]
	\arrow["\lrcorner"{anchor=center, pos=0.125, rotate=180}, draw=none, from=3-3, to=2-1]
	\arrow["e",from=2-4, to=3-3]
	\arrow["{n_*a}", from=1-4, to=2-4]
	\arrow["l"{description}, dashed, from=1-4, to=2-3]
\end{tikzcd}\]
Then the universal property of the pushout induces a unique map $l$ as below
\[\begin{tikzcd}
	&& {K'} &&& {} \\
	&& {b_{0*}K'} \\
	K && {a_*K'} \\
	{K_0} &&&& {b_{0*}K'} \\
	B &&&& {a_*K'}
	\arrow["{a_*k}"', from=5-1, to=5-5]
	\arrow["{a_0}"', from=4-1, to=5-1]
	\arrow[from=4-5, to=5-5]
	\arrow[from=3-3, to=5-5]
	\arrow["{n_*a}", from=2-3, to=3-3]
	\arrow["l"{description}, dashed, from=2-3, to=4-5]
	\arrow["{a_*k}"{description, pos=0.3}, from=5-1, to=3-3]
	\arrow["{b_0}"', from=3-1, to=4-1]
	\arrow[from=1-3, to=2-3]
	\arrow["k"{description}, from=3-1, to=1-3]
	\arrow["{l_0}"{pos=0.6}, curve={height=-12pt}, from=1-3, to=4-5]
	\arrow["\lrcorner"{anchor=center, pos=0.125, rotate=180}, draw=none, from=5-5, to=0]	
	\arrow["{b_{0*}k}"{description}, from=4-1, to=2-3]
	\arrow[""{name=0, anchor=center, inner sep=0}, "{b_{0*}k}"{description}, from=4-1, to=4-5, crossing over]
\end{tikzcd}\]
Though this endomorphism $ l : b_{0*}K' \rightarrow b_{0*}K'$ is not necessarily an idempotent, we can still compute the coequalizer 
\[\begin{tikzcd}
	&& {b_{0*}K'} \\
	{K_0} && {b_{0*}K'} \\
	&& {\coeq(l,1_{b_{0*}K'})}
	\arrow["l"', shift right=1, from=1-3, to=2-3]
	\arrow["{b_{0*}k}", from=2-1, to=1-3]
	\arrow["{b_{0*}k}"{description}, from=2-1, to=2-3]
	\arrow["{1_{b_{0*}K'}}", shift left=2, Rightarrow, no head, from=1-3, to=2-3]
	\arrow["{q_{l,1_{b_{0*}K'}}b_{0*}k}"', from=2-1, to=3-3]
	\arrow["{q_{l,1_{b_{0*}K'}}}", from=2-3, to=3-3]
\end{tikzcd}\]
which where $q_{l,1_{b_{0*}K'}} $ still is in $ \mathcal{B}^2_\omega $ by closure of $ \mathcal{B}_\omega$ under finite colimits. Then by commutation of coequalizers with pushouts, the right square in the following diagram is a pushout 
\[\begin{tikzcd}[sep=large]
	{b_{0*}K'} & {b_{0*}K'} & {\coeq(l,1_{b_{0*}K'})} \\
	{a_*K'} & {a_*K'} & {\coeq(e,1_{a_*K'})}
	\arrow["l"', shift right=1, from=1-1, to=1-2]
	\arrow["{1_{b_{0*}K'}}", shift left=2, Rightarrow, no head, from=1-1, to=1-2]
	\arrow["{q_{l,1_{b_{0*}K'}}}", from=1-2, to=1-3]
	\arrow["{(b_{0*}k)_*a_0}"', from=1-1, to=2-1]
	\arrow[from=1-2, to=2-2]
	\arrow[from=1-3, to=2-3]
	\arrow["{1_{a_*K'}}", shift left=2, Rightarrow, no head, from=2-1, to=2-2]
	\arrow["e"', shift right=1, from=2-1, to=2-2]
	\arrow["{q_{e,1_{a_*K'}}}"', from=2-2, to=2-3]
	\arrow["\lrcorner"{anchor=center, pos=0.125, rotate=180}, draw=none, from=2-3, to=1-2]
\end{tikzcd}\]
But as $e $ was induced from the pair $ (r,s)$, $(r,s)$ is in turn a splitting of the idempotent $ e$, so that $ r$ happens to coincide with the coequalizer above, that is 
\[  C \simeq \coeq(e, 1_{a_*K'}) \hskip 1cm \textrm{and} \hskip 1cm r = q_{e,1_{a_*K'}} \]
Then by composition of pushouts, $ n$ itself is exhibited as the following pushout of a map in $\mathcal{B}^2_\omega$:
\[\begin{tikzcd}
	&& {b_{0*}K'} \\
	{K_0} &&& {\coeq(l,1_{b_{0*}K'})} \\
	&& {a_*K'} \\
	B &&& C
	\arrow["{q_{l,1_{b_{0*}K'}}}", from=1-3, to=2-4]
	\arrow["{(b_{0*}k)_*a_0}"{description, pos=0.2}, from=1-3, to=3-3]
	\arrow[from=2-4, to=4-4]
	\arrow["{q_{e,1_{a_*K'}}}"{description}, from=3-3, to=4-4]
	\arrow["\lrcorner"{anchor=center, pos=0.125, rotate=180}, shift left=1, draw=none, from=4-4, to=1-3]
	\arrow["{b_{0*}k}", from=2-1, to=1-3]
	\arrow["{a_0}"', from=2-1, to=4-1]
	\arrow["{a_{0*}k}"{description}, from=4-1, to=3-3]
	\arrow["\lrcorner"{anchor=center, pos=0.125, rotate=180}, draw=none, from=3-3, to=2-1]
	\arrow["n"', from=4-1, to=4-4]
	\arrow["{q_{l,1_{b_{0*}K'}}b_{0*}k}"{description, pos=0.4}, from=2-1, to=2-4, crossing over]
\end{tikzcd}\]
This proves that the category $ \mathcal{G}_B$ is closed under retracts. 
\end{proof}

\begin{remark}
In fact we could have already guessed that $ \mathcal{G}_B$ had to be closed under retracts, for it was proven before to be closed under finite colimits. However we think it is worth emphasizing why retracts of morphisms in $\mathcal{G}_B$ are still in $\mathcal{G}_B$, as someone proving first that $ \mathcal{G}_B$ is a dense generator would let think at first sight that one should also take retracts of objects of $ \mathcal{G}_B$ to have all the finitely presented objects - which would make everything far more complicated when applying the results for concrete situations: it is somewhat reassuring to see precisely why this hindrance is illusory. 
\end{remark}

\begin{remark}
In \cite{lurie2009derived}[Warning 2.2.5], we are warned that in the context of $(\infty,1)$-categories, the statement above ceases to be true: the analog of $ \mathcal{G}_B$ are not anymore closed under retract. But we think this is due to the fact that, unlike in 1-categories, splitting of idempotents in $(\infty,1)$-categories ceases to be constructible by mean of finite (co)limits, so that in this context there is no analog to our argument involving expression of the splitting as a coequalizer. 
\end{remark}

\begin{lemma}\label{canonical cone in coslice}
$ \mathcal{G}_B$ is a dense generator of $ B \downarrow \mathcal{B}$. 
\end{lemma}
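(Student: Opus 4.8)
The plan is to fix an object $l : B \rightarrow C$ of $B \downarrow \mathcal{B}$ and show that $l$ is the colimit of the canonical diagram $D : \mathcal{G}_B \downarrow l \rightarrow B \downarrow \mathcal{B}$ sending a morphism $g \rightarrow l$ to its domain $g$, equipped with the tautological cocone $(g \rightarrow l)_g$; this is exactly the density assertion. The enabling observation is that the index category $\mathcal{G}_B \downarrow l$ is \emph{filtered}: we proved that $\mathcal{G}_B$ is closed under finite colimits in $B \downarrow \mathcal{B}$, so $\mathcal{G}_B$ has finite colimits, and therefore so does the comma $\mathcal{G}_B \downarrow l$ (the colimit in $\mathcal{G}_B$ of a finite diagram of objects over $l$ inherits a canonical map to $l$); a category with finite colimits is filtered. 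Consequently the colimit of $D$ exists in $B \downarrow \mathcal{B}$, the codomain functor preserves it, and the tautological cocone induces a comparison morphism $\varphi : \colim D \rightarrow l$. Since $\cod$ reflects isomorphisms, it suffices to prove that the induced cocone $(\cod(g) \rightarrow C)_{g}$ exhibits $C$ as $\colim_{g \in \mathcal{G}_B \downarrow l} \cod(g)$.

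By \cref{testing filtered colimits}, and since $\mathcal{G}_B \downarrow l$ is filtered, this reduces to checking that for every finitely presented object $P$ of $\mathcal{B}$ the canonical map
\[ \underset{g \in \mathcal{G}_B \downarrow l}{\colim} \; \mathcal{B}(P, \cod(g)) \longrightarrow \mathcal{B}(P, C) \]
is a bijection. The device for both surjectivity and injectivity is the object $g_P := (B \rightarrow B + P)$ of $\mathcal{G}_B$, obtained as the pushout of the map $0 \rightarrow P$ in $\mathcal{B}^2_\omega$ (with $0$ the initial object, which is finitely presented) along the unique arrow $0 \rightarrow B$; its codomain is the coproduct $B + P$. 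Because $B + P$ is a coproduct, morphisms $g_P \rightarrow g$ in $B \downarrow \mathcal{B}$ correspond bijectively to arrows $w : P \rightarrow \cod(g)$. For any $h : P \rightarrow C$ the copairing $\langle l, h \rangle : B + P \rightarrow C$ makes $g_P$ an object of $\mathcal{G}_B \downarrow l$, and then a morphism $g_P \rightarrow g$ given by $w$ lies over $l$ precisely when the leg of $g$ composes with $w$ to give $h$.

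Surjectivity is immediate: any $h : P \rightarrow C$ factors as $P \hookrightarrow B + P = \cod(g_P)$ followed by $\langle l, h \rangle$, so it is hit by the leg of $g_P$. For injectivity, suppose arrows $u_1 : P \rightarrow \cod(g_1)$ and $u_2 : P \rightarrow \cod(g_2)$, with $g_1, g_2$ over $l$, become equal after composing with the legs to $C$. By filteredness of $\mathcal{G}_B \downarrow l$ we transport both along morphisms $g_1 \rightarrow g$, $g_2 \rightarrow g$ into a common $g$, obtaining $v_1, v_2 : P \rightarrow \cod(g)$ whose composites with the leg $\cod(g) \rightarrow C$ both equal some $h$. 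Giving $g_P$ the leg $\langle l, h \rangle$, the arrows $v_1, v_2$ are exactly two \emph{parallel} morphisms $g_P \rightarrow g$ in $\mathcal{G}_B \downarrow l$. Filteredness then supplies a morphism $g \rightarrow g''$ coequalizing them, which on codomains says precisely that $v_1$ and $v_2$ are identified in the colimit. This establishes the bijection, whence $C \simeq \colim_g \cod(g)$, $\varphi$ is an isomorphism, and $\mathcal{G}_B$ is dense.

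The main obstacle is the injectivity step: two finite pushout approximations of $l$ witnessing the same map out of $P$ need not be compatible, and a priori nothing forces them to be identified along the diagram. The point that unlocks it is to repackage an arbitrary arrow $P \rightarrow \cod(g)$ as a morphism out of the single object $g_P$, so that the equalizing condition becomes a direct instance of filteredness of $\mathcal{G}_B \downarrow l$ — which in turn rests on the earlier structural result that $\mathcal{G}_B$ is closed under finite colimits.
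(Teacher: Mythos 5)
Your proof is correct, but it takes a genuinely different route from the paper's. The paper decomposes $l$ as the canonical filtered colimit $\colim\, \mathcal{B}^2_\omega \downarrow l$ in the arrow category, exhibits the pushout assignment $\langle l, -\rangle : \mathcal{B}^2_\omega \downarrow l \rightarrow \mathcal{G}_B \downarrow l$ as essentially surjective and full (the latter via \cref{lifting of diagrams}), and invokes \cref{esofull cofinal} to conclude that it is cofinal --- filteredness of $\mathcal{G}_B \downarrow l$ falling out as a by-product --- before transporting the colimit back along the coslice inclusion, which creates filtered colimits. You instead obtain filteredness of $\mathcal{G}_B \downarrow l$ from the closure of $\mathcal{G}_B$ under finite colimits (the unlabelled corollary preceding \cref{closed under retract}, itself a consequence of \cref{lift of finite diagrams}), and then verify the colimit directly against finitely presented objects via \cref{testing filtered colimits}, using the object $B \rightarrow B + P$ --- which is $\cod^*(P)$ for the left adjoint of the codomain functor, and lies in $\mathcal{G}_B$ as the pushout of $0 \rightarrow P$ along $0 \rightarrow B$ --- to represent $\mathcal{B}(P, \cod(-))$; your surjectivity and injectivity checks are both sound, the key point being that a map $P \rightarrow \cod(g)$ compatible with the legs to $C$ is the same thing as a morphism $(g_P, \langle l,h\rangle) \rightarrow (g,p)$ in $\mathcal{G}_B\downarrow l$, so that filteredness supplies the required coequalization. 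Both arguments ultimately rest on the lifting lemmas, just through different corollaries. What the paper's route buys is the explicit cofinality of the pushout functor, relating the canonical cocone over $\mathcal{G}_B$ to the canonical cocone in $\mathcal{B}^2$ --- a comparison that is reused in the comma-object section; what yours buys is a shorter, more self-contained endgame that sidesteps the cofinality lemma and makes transparent the role of the adjunction $\cod^* \dashv \cod$.
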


\begin{proof}
From what was said before on the arrow category, we know that any $ f : B \rightarrow C$ decomposes as a filtered colimit $ f \simeq \colim \, \mathcal{B}^2_\omega \downarrow f$ in the arrow category $ \mathcal{B}^2$. Moreover, from computation of colimits in $ \mathcal{B}^2$, we have both that 
\[ B \simeq \underset{(k, b, b') \in \mathcal{B}^2_\omega \downarrow f}{\colim} \dom (k) \hskip 1cm C \simeq \underset{(k, b, b') \in \mathcal{B}^2_\omega \downarrow f}{\colim} \cod (k) \]
where the colimits range over the squares 
\[\begin{tikzcd}
	K & {K'} \\
	B & C
	\arrow["b"', from=1-1, to=2-1]
	\arrow["k", from=1-1, to=1-2]
	\arrow["{b'}", from=1-2, to=2-2]
	\arrow["f"', from=2-1, to=2-2]
\end{tikzcd}\]
But each of those squares induces uniquely an arrow from the pushout
\[\begin{tikzcd}
	K & {K'} \\
	B & {b_*K'} \\
	&& C
	\arrow["b"', from=1-1, to=2-1]
	\arrow["k", from=1-1, to=1-2]
	\arrow["{b'}", curve={height=-12pt}, from=1-2, to=3-3]
	\arrow["f"', curve={height=12pt}, from=2-1, to=3-3]
	\arrow["{b_*k}"{description}, from=2-1, to=2-2]
	\arrow[from=1-2, to=2-2]
	\arrow["\lrcorner"{anchor=center, pos=0.125, rotate=180}, draw=none, from=2-2, to=1-1]
	\arrow["{\langle f,b' \rangle}"{description}, dashed, from=2-2, to=3-3]
\end{tikzcd}\]
while a morphism in $ \mathcal{B}^2_\omega$ 
\[\begin{tikzcd}[sep=large]
	&& {K_1'} && {K_2'} \\
	{K_1} && {K_2} & C \\
	& B
	\arrow["{b_1}"', from=2-1, to=3-2]
	\arrow["{b_2}"{description}, from=2-3, to=3-2]
	\arrow["{k_1}", from=2-1, to=1-3]
	\arrow["f"{description}, from=3-2, to=2-4]
	\arrow["{b_1'}"{description, pos=0.3}, from=1-3, to=2-4]
	\arrow["{b_2'}", from=1-5, to=2-4]
	\arrow["{k_2}"{description}, from=2-3, to=1-5, crossing over, crossing over]
	\arrow["l"{description}, from=2-1, to=2-3]
	\arrow["{l'}", from=1-3, to=1-5]
\end{tikzcd}\]
induces a morphism between the pushouts in the coslice 
\[\begin{tikzcd}[sep=large]
	& {b_{1*}K'_1} && {b_{2*}K'_2} \\
	B && C
	\arrow["f"{description}, from=2-1, to=2-3]
	\arrow["{\langle f, b_1' \rangle}"'{pos=0.1}, from=1-2, to=2-3]
	\arrow["{\langle f, b_2' \rangle}", from=1-4, to=2-3]
	\arrow["{\langle f, k_{2*}b_2l'\rangle}", from=1-2, to=1-4]
	\arrow["{b_{1*}k_1}", from=2-1, to=1-2]
	\arrow["{b_{2*}k_2}"{description, pos=0.7}, from=2-1, to=1-4, crossing over]
\end{tikzcd}\]
This defines a ``pushout" functor 
\[\begin{tikzcd}
	{\mathcal{B}^2_\omega \downarrow f} && {\mathcal{G}_B\downarrow f}
	\arrow["{\langle f, - \rangle}", from=1-1, to=1-3]
\end{tikzcd}\]
sending $ (k,b,b')$ on the induced $ \langle f,b' \rangle : b_*k \rightarrow f $ in the coslice $ B \downarrow \mathcal{B}$. From \cref{lifting of diagrams}, we know that the pushout functor is full; moreover it is essentially surjective by the very definition of $\mathcal{G}_B$, and as $ \mathcal{B}^2_\omega \downarrow f$ is filtered, \cref{esofull cofinal} ensures us that $ \langle f, - \rangle $ is cofinal and that $ \mathcal{G}_B \downarrow f$ is moreover filtered - though the later item can also easily be checked concretely. \\

But now we have a pseudocommutative square of functors 
\[\begin{tikzcd}
	{\mathcal{B}^2_\omega \downarrow f} & {\mathcal{G}_B\downarrow f} \\
	{\mathcal{B}^2} & {B \downarrow \mathcal{B}}
	\arrow["{\langle f,- \rangle}", from=1-1, to=1-2]
	\arrow["\dom"', from=1-1, to=2-1]
	\arrow["\pi", from=1-2, to=2-2]
	\arrow["\iota_B", hook', from=2-2, to=2-1]
	\arrow["\simeq"{description}, draw=none, from=1-1, to=2-2]
\end{tikzcd}\]
where $ \pi : \mathcal{G}_B \downarrow f \rightarrow B \downarrow \mathcal{B}$ is the projection sending the triangle 
\[\begin{tikzcd}
	K & {K'} \\
	B & {b_*K'} \\
	& C
	\arrow["b"', from=1-1, to=2-1]
	\arrow["{b_*k}"{description}, from=2-1, to=2-2]
	\arrow["k", from=1-1, to=1-2]
	\arrow["{b'}", from=1-2, to=2-2]
	\arrow["\lrcorner"{anchor=center, pos=0.125, rotate=180}, draw=none, from=2-2, to=1-1]
	\arrow["f"', from=2-1, to=3-2]
	\arrow["a", from=2-2, to=3-2]
\end{tikzcd}\]
on $ b_*k : B \rightarrow b_*K'$ in $\mathcal{B}$. From $ \langle f, - \rangle  $ is cofinal, we have an isomorphism of the corresponding filtered colimit in $\mathcal{B}^2$
\begin{align*}
    f &\simeq \underset{\mathcal{B}^2_\omega \downarrow f}{\colim} \; \cod \\ &\simeq \underset{\mathcal{B}^2_\omega \downarrow f}{\colim} \; \iota_B \pi \langle f, - \rangle\\
    &\simeq  \underset{\mathcal{G}_B \downarrow f}{\colim} \; \iota_B \pi
\end{align*}
But we saw that $ \iota_B : B \downarrow \mathcal{B} \hookrightarrow \mathcal{B}$ creates filtered colimits: hence we already have a filtered colimit in $ B \downarrow \mathcal{B}$
\[ f \simeq \colim \; \mathcal{G}_B \downarrow f  \]
This proves that $\mathcal{G}_B$ is a dense generator in $B \downarrow \mathcal{B}$ consisting of finitely presented objects.
\end{proof}

Putting altogether the previous lemma, we have proven the main result of this part:

\begin{theorem}
Let be $\mathcal{B}$ a locally finitely presentable category and $ B$ an object of $\mathcal{B}$. Then the coslice $ B \downarrow \mathcal{B}$ is locally finitely presentable and we have an equivalence
\[  ( B \downarrow \mathcal{B})_\omega \simeq \mathcal{G}_B \]
\end{theorem}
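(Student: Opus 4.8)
The plan is to assemble the accumulated lemmas into an instance of the Gabriel--Ulmer recognition criterion: a category is locally finitely presentable precisely when it is cocomplete and admits an essentially small dense generator consisting of finitely presented objects. Almost all of the substantive work has already been carried out in the preceding results, so the proof of the theorem itself is chiefly a matter of bookkeeping and of invoking the right abstract criterion.

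First I would check cocompleteness of $B \downarrow \mathcal{B}$. Since $\mathcal{B}$ is cocomplete, the discussion preceding the remark on the codomain functor shows that coslices inherit all colimits, computed via the ``corrected'' colimit (equivalently, the codomain of a colimit is the wide coequalizer described there); in particular the two corollaries already record that filtered colimits and coequalizers exist and are well behaved. Next I would observe that $\mathcal{G}_B$ is essentially small: by the proposition on finite functor categories $\mathcal{B}^2_\omega \simeq (\mathcal{B}_\omega)^2$ is essentially small, and every object of $\mathcal{G}_B$ is determined up to isomorphism by a choice of $k \in \mathcal{B}^2_\omega$ together with a map $a : K \to B$, of which there is only a set. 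Combined with the lemma that objects of $\mathcal{G}_B$ are finitely presented in $B \downarrow \mathcal{B}$ and with \cref{canonical cone in coslice}, which exhibits $\mathcal{G}_B$ as a dense generator, this furnishes exactly the data required by the recognition criterion, whence $B \downarrow \mathcal{B}$ is locally finitely presentable.

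It then remains to identify $(B \downarrow \mathcal{B})_\omega$ with $\mathcal{G}_B$. One inclusion $\mathcal{G}_B \subseteq (B \downarrow \mathcal{B})_\omega$ is precisely the lemma that every object of $\mathcal{G}_B$ is finitely presented. For the reverse inclusion I would run the standard retract argument: if $X$ is finitely presented in $B \downarrow \mathcal{B}$, then by density $X$ is a filtered colimit of objects of $\mathcal{G}_B$, so the identity on $X$ factors through one of the colimit coprojections, exhibiting $X$ as a retract of some object of $\mathcal{G}_B$. At this point the crucial input is \cref{closed under retract}: since $\mathcal{G}_B$ is closed under retracts, $X$ itself lies in $\mathcal{G}_B$. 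Together the two inclusions give the desired equivalence.

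As for difficulty, the genuine obstacles have all been dispatched in the preceding lemmas --- the technical lifting results \cref{Anel lemma}, \cref{lifting of diagrams} and \cref{lift of finite diagrams} feeding into density, and the coequalizer/idempotent-splitting argument behind \cref{closed under retract}. In the proof of the theorem proper, the only point demanding genuine care is the logical role of retract-closure: without \cref{closed under retract} one could conclude only that $(B \downarrow \mathcal{B})_\omega$ is the retract-closure of $\mathcal{G}_B$, so it is exactly this closure that upgrades the dense-generator statement into an on-the-nose equivalence.
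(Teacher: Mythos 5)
Your proposal is correct and follows exactly the assembly the paper intends: the theorem is stated with only the remark ``putting altogether the previous lemmas,'' and your write-up spells out precisely that assembly --- cocompleteness from Section 1, essential smallness of $\mathcal{G}_B$, finite presentability of its objects, density from \cref{canonical cone in coslice}, and the retract argument via \cref{closed under retract} to upgrade the generator statement to the equivalence $(B \downarrow \mathcal{B})_\omega \simeq \mathcal{G}_B$. Your emphasis on retract-closure being the step that turns ``retract-closure of $\mathcal{G}_B$'' into ``$\mathcal{G}_B$ on the nose'' matches the paper's own remark following \cref{closed under retract}.
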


It is also worth precising that the codomain functor is actually part of this locally finitely presentable structure:

\begin{proposition}
The codomain functor $ \cod : B \downarrow \mathcal{B} \rightarrow \mathcal{B}$ is the right part of a morphism of locally finitely presentable categories.  
\end{proposition}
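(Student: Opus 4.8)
The plan is to exhibit $\cod$ as a finitary right adjoint between locally finitely presentable categories, which is exactly the datum of the direct image (right) part of a morphism of $\LFP$. By the main theorem of this section $B \downarrow \mathcal{B}$ is locally finitely presentable, and $\mathcal{B}$ is so by hypothesis, so both source and target are objects of $\LFP$; it then remains to produce a left adjoint to $\cod$ and to observe that $\cod$ preserves filtered colimits, both of which are essentially already available.

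First I would construct the left adjoint $L : \mathcal{B} \rightarrow B \downarrow \mathcal{B}$, namely the ``free object under $B$'' functor. Since $\mathcal{B}$ is cocomplete it has binary coproducts, and I set $L(X)$ to be the coprojection $\iota_B : B \rightarrow B + X$. For any object $g : B \rightarrow D$ of the coslice, a morphism $L(X) \rightarrow g$ is by definition an arrow $h : B + X \rightarrow D$ with $h\,\iota_B = g$; by the universal property of the coproduct such an $h$ is determined freely by its restriction $h\,\iota_X : X \rightarrow D$, while the constraint on $B$ is forced. This yields a bijection, natural in both variables,
\[ (B \downarrow \mathcal{B})[L(X),\, g] \;\simeq\; \mathcal{B}[X,\, D] \;=\; \mathcal{B}[X,\, \cod(g)], \]
so that $L \dashv \cod$, with unit the coprojection $\iota_X : X \rightarrow B + X$ and counit $[g, 1_D] : B + D \rightarrow D$. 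In particular $\cod$ is a right adjoint, which recovers the earlier observation that it preserves all limits.

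Finally, the corollary established above shows that $\cod$ preserves filtered colimits; combined with the adjunction $L \dashv \cod$, this exhibits $\cod$ as a finitary right adjoint between locally finitely presentable categories, which is precisely the right part of a morphism of $\LFP$. As a complement, applying \cref{Ladj of finitary functor} to this adjunction shows that the corresponding left (inverse image) part $L$ carries finitely presented objects to finitely presented ones, as one expects of a morphism in $\LFP$; concretely $L(X)$ is finitely presented in the coslice whenever $X$ is, since it is the pushout of $\emptyset \rightarrow X$ along $\emptyset \rightarrow B$ and hence lands in $\mathcal{G}_B$.

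I do not anticipate a serious obstacle here: the only genuine content is the identification of the left adjoint, which is the standard free-object-under-$B$ construction, and both the triangle identities and the finitariness reduce to facts already proved. The one point requiring care is purely conventional, namely confirming that ``right part'' refers to the finitary right adjoint $\cod$ itself rather than to its left adjoint $L$, so that the roles of direct and inverse image are assigned in the intended direction.
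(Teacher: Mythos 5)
Your proposal is correct and follows essentially the same route as the paper: the left adjoint is the same coproduct-inclusion functor $X \mapsto (B \to B+X)$, finitariness of $\cod$ is quoted from the earlier corollary, and preservation of finitely presented objects by the left adjoint is justified by exhibiting $B \to B+K$ as the pushout of $0 \to K$ along $0 \to B$, exactly as in the paper. Your explicit verification of the adjunction bijection and your remark on the ``right part'' convention are just slightly more detailed versions of what the paper leaves implicit.
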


\begin{proof}
From what was said in part 1, we know that $ \cod$ is both finitary and continuous. Its left adjoint $ \cod^*$ can be defined as sending $C$ in $\mathcal{B}$ to the coproduct inclusion $ i_{B,C}^B : B \rightarrow B + C$ and a morphism $f : C_1 \rightarrow C_2$ on the morphism induced by universal property of the coproduct. It is immediate that this functor preserves finitely presented objects as for a finitely presented $K$ the following square 
\[\begin{tikzcd}
	0 & K \\
	B & {B+K}
	\arrow["{\cod^*(K)}"', from=2-1, to=2-2]
	\arrow["{!_B}"', from=1-1, to=2-1]
	\arrow["{!_K}", from=1-1, to=1-2]
	\arrow[from=1-2, to=2-2]
	\arrow["\lrcorner"{anchor=center, pos=0.125, rotate=180}, draw=none, from=2-2, to=1-1]
\end{tikzcd}\]
is a pushout and $ 0$ is always finitely presented. 
\end{proof}

Now observe that in the particular case of the coslice at finitely presented objects, the descriptions above simplifies and we have 

\begin{corollary}
Let be $K$ a finitely presented object. Then we have
\[ ( K \downarrow \mathcal{B})_\omega \simeq K \downarrow \mathcal{B}_\omega \]
\end{corollary}

\begin{proof}
This is because in this case $ \mathcal{G}_K$ is a subcategory of $ \mathcal{B}^2_\omega$ as $ \mathcal{B}_\omega$ is closed under finite colimits, so that in any pushout square 
\[\begin{tikzcd}
	{K_0} & {K_0'} \\
	K & {k_*K_0'}
	\arrow["k"', from=1-1, to=2-1]
	\arrow["{k_0}", from=1-1, to=1-2]
	\arrow["{k_*k_0}"', from=2-1, to=2-2]
	\arrow[from=1-2, to=2-2]
	\arrow["\lrcorner"{anchor=center, pos=0.125, rotate=180}, draw=none, from=2-2, to=1-1]
\end{tikzcd}\]
the pushout $ k_*K_0'$ is finitely presented. Conversely any arrow $ k : K \rightarrow K'$ in $\mathcal{B}_\omega^2$ is its own pushout along the identity map of $K$. 
\end{proof}

To finish, let us examine functoriality of the construction.

\begin{proposition}
For $ f: B_1 \rightarrow B_2$ in $\mathcal{B}$, the adjoint pair 
\[\begin{tikzcd}
	{B_2 \downarrow \mathcal{B}} && {B_1 \downarrow \mathcal{B}}
	\arrow[""{name=0, anchor=center, inner sep=0}, "{f^!}"', curve={height=12pt}, from=1-1, to=1-3]
	\arrow[""{name=1, anchor=center, inner sep=0}, "{f_*}"', curve={height=12pt}, from=1-3, to=1-1]
	\arrow["\dashv"{anchor=center, rotate=-90}, draw=none, from=1, to=0]
\end{tikzcd}\]
defines a morphisms $f\downarrow \mathcal{B}$ of locally finitely presentable categories.\end{proposition}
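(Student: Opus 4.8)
The plan is to identify the two functors concretely, note that both coslices are locally finitely presentable by the equivalence $(B\downarrow\mathcal{B})_\omega \simeq \mathcal{G}_B$ established above, and then reduce the whole statement to \cref{Ladj of finitary functor}. First I would record that the right adjoint $f^!$ is the restriction functor, sending an object $g : B_2 \rightarrow C$ to the composite $gf : B_1 \rightarrow C$, while the left adjoint $f_*$ is the pushout functor, sending $h : B_1 \rightarrow C$ to the coprojection $B_2 \rightarrow B_2 +_{B_1} C$. The adjunction $f_* \dashv f^!$ is precisely the universal property of the pushout, so I can take it as given from the statement; in particular $f^!$, being a right adjoint, is continuous.

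By \cref{Ladj of finitary functor}, and since both $B_1 \downarrow \mathcal{B}$ and $B_2 \downarrow \mathcal{B}$ are locally finitely presentable, it suffices to prove that the left adjoint $f_*$ preserves finitely presented objects in order to conclude that $f^!$ is finitary. Combined with continuity, this exhibits $f^!$ as the right part (and $f_*$ as the left part) of a morphism of locally finitely presentable categories, which is what the proposition asserts.

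The heart of the argument is then a single pasting of pushouts. A finitely presented object of $B_1 \downarrow \mathcal{B}$ is, by the characterization obtained above, an object $l$ of $\mathcal{G}_{B_1}$, hence a pushout of some $k : K \rightarrow K'$ in $\mathcal{B}^2_\omega$ along some $a : K \rightarrow B_1$. Applying $f_*$ amounts to pushing $l$ along $f : B_1 \rightarrow B_2$; stacking this pushout square beneath the one presenting $l$ produces a rectangle whose left edge is the composite $fa : K \rightarrow B_2$, and whose outer boundary, by the pasting law for pushouts, exhibits $f_*(l)$ as the pushout of the same $k \in \mathcal{B}^2_\omega$ along $fa$. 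Thus $f_*(l)$ lies in $\mathcal{G}_{B_2}$ and is therefore finitely presented in $B_2 \downarrow \mathcal{B}$, as required.

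I do not expect a genuine obstacle: once the two functors are correctly identified and the direction of the adjunction is fixed, fp-preservation falls out immediately from the pasting law, and \cref{Ladj of finitary functor} supplies the rest. The only points demanding a little care are bookkeeping ones, namely matching ``morphism of locally finitely presentable categories'' with its characterization as a finitary continuous right adjoint, and being careful that it is the left adjoint $f_*$, rather than $f^!$, to which the fp-preservation computation applies.
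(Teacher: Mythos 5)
Your proof is correct, and the pushout-pasting argument at its core --- a finitely presented object of $B_1\downarrow\mathcal{B}$ lies in $\mathcal{G}_{B_1}$, so stacking its presenting pushout square on top of the square defining $f_*$ exhibits $f_*(l)$ as a pushout of the same $k\in\mathcal{B}^2_\omega$ along $fa$, hence an object of $\mathcal{G}_{B_2}$ --- is exactly the paper's justification for fp-preservation (``compositions of pushouts''). Where you diverge is in how the properties of $f^!$ are obtained: the paper verifies continuity of $f^!$ by an explicit computation with the universal property of limits in the coslice, and verifies finitariness directly from the fact that filtered colimits in coslices are computed by the codomain functor, treating fp-preservation of $f_*$ as a separate closing remark. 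You instead take continuity for free from $f^!$ being a right adjoint and \emph{derive} finitariness of $f^!$ from fp-preservation of $f_*$ via \cref{Ladj of finitary functor}, using that both coslices are already known to be locally finitely presentable. Your route is more economical and makes the logical role of the generator theorem explicit; the paper's direct computations are more self-contained and do not route the finitariness of $f^!$ through the identification of $(B\downarrow\mathcal{B})_\omega$ with $\mathcal{G}_B$. Both are valid given what has been established at this point in the text.
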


\begin{proof}
Let us see why the right adjoint $ f^!$, which is precomposition with $f$, is continuous and finitary. Consider any limit in $B_2 \downarrow \mathcal{B}$ of a diagram $ F : I \rightarrow B_2\downarrow \mathcal{B}$, knowing that this limit is the induced map $ \lim F = (F(i))_{i \in I}$. Then by naturality of the universal property of the limit at $f$
\[\begin{tikzcd}
	{\mathcal{B}[B_2,  \lim \, \cod \, F]} & {\mathcal{B}^I[\Delta_{B_2},  F]} \\
	{\mathcal{B}[B_,  \lim \, \cod \, F]} & {\mathcal{B}^I[\Delta_{B_1},  F]}
		\arrow["\simeq"{description}, draw=none, from=1-1, to=1-2]
	\arrow["\simeq"{description}, draw=none, from=1-1, to=1-2]
	\arrow["{\mathcal{B}^I[\Delta_{f},  F]}", from=1-2, to=2-2]
	\arrow["{f^!}"', from=1-1, to=2-1]
	\arrow["\simeq"{description}, draw=none, from=2-1, to=2-2]
	\end{tikzcd}\]
where $ \Delta$ returns the constant diagram at an object and $\mathcal{B}^I[\Delta_{f},  F] $ is precomposition of a cone over $F $ with tip $ B_2$ with $f$, we know that $ (F(i))_{i \in I}f$ is the universal map induced from the composite cone $ (F(i)f)_{i \in I}$, so that
\begin{align*}
    f^!(\lim \, F)  &= (F(i))_{i \in I} f \\
    &= (F(i) f)_{i \in I} \\
    &= (f^!F(i))_{i \in I} \\
    &= \lim f^! F 
    \end{align*}

Concerning filtered colimits, recall that again for $ F : I \rightarrow \mathcal{B}$ filtered, $\cod \,\colim\, F = \colim \, \cod \, F$ and $ \colim \, F$ is equal to any of the composite $ q_i F(i) $ with  $q_i : \cod \, F(i) \rightarrow \colim \, \cod \, F(i)$ the colimit inclusion. Hence it is immediate that for any $i $ in $I$ we have
\begin{align*}
    f^!(\colim \, F) &= \colim \, F f \\
    &= q_i f_i f \\
    &= \colim \, f^!F
\end{align*}
Finally, compositions of pushouts makes obvious that the left adjoint $ f_*$ sends finitely presented objects of $ B_1 \downarrow \mathcal{B}$ to finitely presented objects in $B_2 \downarrow \mathcal{B}$. 
\end{proof}

Observe that we have in particular a triangle in $ \LFP$
\[\begin{tikzcd}
	{B_2 \downarrow \mathcal{B}} && {B_1 \downarrow \mathcal{B}} \\
	& {\mathcal{B}}
	\arrow[""{name=0, anchor=center, inner sep=0}, "{f\downarrow\mathcal{B}}", from=1-1, to=1-3]
	\arrow["\cod"', from=1-1, to=2-2]
	\arrow["\cod", from=1-3, to=2-2]
	\arrow["{=}"{description}, Rightarrow, draw=none, from=0, to=2-2]
\end{tikzcd}\]
induced from the triangle 
\[\begin{tikzcd}
	{B_2} && {B_1 } \\
	& 0
	\arrow["f"', from=1-3, to=1-1]
	\arrow["{!_{B_2}}", from=2-2, to=1-1]
	\arrow["{!_{B_1}}"', from=2-2, to=1-3]
\end{tikzcd}\]
This exhibits in particular the codomain functor $ \cod : B\downarrow \mathcal{B} \rightarrow \mathcal{B}$ as the transition morphism $ !_B \downarrow \mathcal{B}$. \\

The construction above defines a functor
\[\begin{tikzcd}
	{\mathcal{B}^{\op}} && \LFP
	\arrow["{(-)\downarrow\mathcal{B}}", from=1-1, to=1-3]
\end{tikzcd}\]

In a future work, we shall describe how this is related to a notion of \emph{spectral 2-site} of a finite limit theory, in the context of a \emph{2-dimensional geometry} associated to Gabriel-Ulmer duality. 

\section{Comma-objects in $\LFP$}

In this section we analyse a similar problem, concerning this time the comma of locally a finitely presentable category at a morphism of locally finitely presentable categories. In \cite{makkai1989accessible}[Proposition 6.1.1] and \cite{adamek1994locally}[Proposition 2.43] it is proven that for any cospan of accessible functors between accessible categories 
\[\begin{tikzcd}
	& {\mathcal{A}_2} \\
	{\mathcal{A}_1} & {\mathcal{B}}
	\arrow["{F_2}", from=1-2, to=2-2]
	\arrow["{F_1}"', from=2-1, to=2-2]
\end{tikzcd}\]
the comma category $ F_1\downarrow F_2$ is accessible; then in particular whenever $F_1$ and $F_2$ are locally presentable, preservation of limits makes the comma complete and then locally presentable itself. However, in both of those sources, the strategy of the proof does not allow to control the rank of accessibility of the comma, even when we know the ranks of the functors and the categories involved, with for instance $ \lambda$ a convenient cardinal for all those ranks. The argument relies indeed on the fact that there exists \emph{another cardinal} $ \mu$ larger than $ \lambda$ such that both $F_1$ and $ F_2$ moreover preserves $ \mu$-finitely presented objects. Then one can prove only $ F_1\downarrow F_2$ to be $\mu$-accessible, with its $ \mu$-presentable objects being of the form $f : F_1(K_1) \rightarrow F_2(K_2)$ with $ K_1$ and $ K_2$ being $ \mu$-presentable in $\mathcal{A}_1$ and $\mathcal{A}_2$ respectively. However this is not satisfactory for our purpose. While we do not investigate the general form of the comma involving two functors, this section is devoted to improving this result concerning comma of the form $F \downarrow \mathcal{B}$.\\

In this section, we prove that for a morphism of locally finitely presentable categories $ F : \mathcal{A} \rightarrow \mathcal{B}$ consisting of an adjoint pair $ F^*\dashv F_*$ with $F_*$ finitary, the comma object $ F_*\downarrow \mathcal{B}$ is locally \emph{finitely} presentable. The intuition behind this claim is the following: from Gabriel-Ulmer duality, we know that the 2-category $ \LFP$ of locally finitely presentable categories is equivalent to the opposite 2-category $ \Lex^{\op}$ of small lex categories with lex functors between them. But ongoing investigation on 2-categorical model theory, as well as the work of \cite{bourke2020accessible}, tell us that $ \Lex $ is a \emph{locally finitely bipresentable 2-category} - or at least a \emph{locally presentable 2-category} in \cite{bourke2020accessible} - which indicate its closure under both small pseudolimits and pseudocolimits. Hence if $\Lex$ is closed under pseudocolimit, $ \LFP$ must be closed under pseudolimits, in particular under comma objects.\\

At first sight, such comma objects might be different from those computed in $\Cat$. If $ \Lex$, being KZ-monadic on $\Cat$, is known to be equipped with a 2-functor $ \Lex \rightarrow \Cat$ preserving pseudolimits, controlling its pseudocolimits is more difficult: by duality, while controlling pseudocolimits of $\LFP$ should be easy, controlling its pseudolimits is not. \\

However, we are going to see in this section that actually, not only the comma computed in $\Cat$ is locally finitely presentable as well as all the canonical functors involved, but moreover, that this comma object, as computed in $\Cat$, also has the universal property of the comma object in $ \LFP$. \\

In $\Cat$, the is a weak algebraic factorization system 
\[ (coreflection,\, fibration) \]
which is obtained as follows. Let be $ F : \mathcal{A} \rightarrow \mathcal{B}$ in $\Cat$. Then form the following comma objects 
\[  \begin{tikzcd}
	{F\downarrow \mathcal{B}} & {\mathcal{B}} \\
	{\mathcal{A}} & {\mathcal{B}}
	\arrow[""{name=0, anchor=center, inner sep=0}, "F"', from=2-1, to=2-2]
	\arrow[""{name=1, anchor=center, inner sep=0}, "{1_\mathcal{B}}", Rightarrow, no head, from=1-2, to=2-2]
	\arrow["{\pi_F}"', from=1-1, to=2-1]
	\arrow["{\pi_\mathcal{B}}", from=1-1, to=1-2]
	\arrow["{\lambda_{F}}", curve={height=-6pt}, shorten <=4pt, shorten >=4pt, Rightarrow, from=0, to=1]
\end{tikzcd}\]
and observe that the pair $(1_\mathcal{A}, f)$ induces in both cases a canonical equality 2-cell 
\[\begin{tikzcd}
	{\mathcal{A}} && {\mathcal{B}} \\
	& {F \downarrow \mathcal{B}}
	\arrow["{1_{F}}"', from=1-1, to=2-2]
	\arrow["\cod_F"', from=2-2, to=1-3]
	\arrow[""{name=0, anchor=center, inner sep=0}, "F", from=1-1, to=1-3]
	\arrow["{=}"{description}, Rightarrow, draw=none, from=0, to=2-2]
\end{tikzcd}\]
where $ 1_{F}$ sends $ A$ in $ \mathcal{A}$ to the identity 1-cell $ F(1_A) = 1_{F(A)}$. 

\begin{remark}
Beware that the pair (coreflection,  fibration) is not a strong orthogonality structure, only a weak one. However the factorization above is canonical up to equality, hence the qualificative of \emph{algebraic}. 
\end{remark}

Now we apply this factorization to the direct image of a morphism of locally finitely presentable categories. Let be $ F : \mathcal{A} \rightarrow \mathcal{B}$ a $\LFP$ morphism, consisting of a finitary continuous functor $ F_* : \mathcal{A} \rightarrow \mathcal{B}$, with its corresponding left adjoint $ F^*$, which is known from \cref{Ladj of finitary functor} to restrict to a functor $ F^*_\omega : \mathcal{B}_\omega \rightarrow \mathcal{A}_\omega$. Then one can consider the (coreflection, fibration) factorization of $ F_*$ in $\Cat$
\[ \begin{tikzcd}
	{\mathcal{A}} && {\mathcal{B}} \\
	& {F_* \downarrow \mathcal{B}}
	\arrow["{1_{F_*}}"', from=1-1, to=2-2]
	\arrow["\cod{F_*}"', from=2-2, to=1-3]
	\arrow[""{name=0, anchor=center, inner sep=0}, "F_*", from=1-1, to=1-3]
	\arrow["{=}"{description}, Rightarrow, draw=none, from=0, to=2-2]
\end{tikzcd} \]

Since $ F_*$ is itself finitary and continuous, we know that limits and filtered colimits in $ F_*\downarrow \mathcal{B}$ are computed in the arrow category $ \mathcal{B}$ as respectively 
\[\begin{tikzcd}
	{F(\underset{i \in I}{\lim}\,A_i)} & {\underset{i \in I}{\lim}\,B_i} \\
	{F(A_i)} & {B_i}
	\arrow["{f_i}"', from=2-1, to=2-2]
	\arrow["{F(q_i)}"', from=1-1, to=2-1]
	\arrow["{q'_i}", from=1-2, to=2-2]
	\arrow["{\underset{i \in I}{\lim}\,f_i}", from=1-1, to=1-2]
\end{tikzcd} \hskip 1cm \begin{tikzcd}
	{F(A_i)} & {B_i} \\
	{F(\underset{i \in J}{\colim}\,A_i)} & {\underset{i \in J}{\colim}\,B_i}
	\arrow["{f_i}", from=1-1, to=1-2]
	\arrow["{F(q_i)}"', from=1-1, to=2-1]
	\arrow["{q'_i}", from=1-2, to=2-2]
	\arrow["{\underset{i \in J}{\colim}\,f_i}"', from=2-1, to=2-2]
\end{tikzcd}\]
(whenever $ J$ is filtered) with $ F(\lim_{i \in I} A_i) \simeq\lim_{i \in I} F(A_i)  $ and $F(\colim_{i \in J} A_i) \simeq \colim_{i \in J} F(A_i)  $.\\

As a consequence, $\cod_{F_*}$ is finitary and continuous, as well as $ 1_{F_*(-)} $ and its pseudoretract $ \pi_{F_*} $ sending $ f: F_*(A) \rightarrow B$ on $A$. From what follows we shall see they are actually part of LFP morphisms.  \\

Most of this section will be devoted to prove that $ F_*\downarrow \mathcal{B}$ is finitely presented. As in section 2, we first guess a candidate for the generator for finitely presented objects, and prove it to be effectively such. Then again proving it to be closed under retract also proves it to contain all finitely presented objects. Finally, the previous discussion above ensuring existence of limits in the comma, we will know it to be locally finitely presented.

\begin{definition}
Define the \emph{generator of the comma} as the full subcategory $ \mathcal{G}_{F_*}$ having as objects all the arrows $ f : F_*(M)\rightarrow B$ with $M$ a finitely presented object of $\mathcal{A}_\omega$ and such that there exists a pushout square 
\[\begin{tikzcd}
	K & {K'} \\
	{F_*(M)} & B
	\arrow["a"', from=1-1, to=2-1]
	\arrow["k", from=1-1, to=1-2]
	\arrow["{a'}", from=1-2, to=2-2]
	\arrow["f"', from=2-1, to=2-2]
	\arrow["\lrcorner"{anchor=center, pos=0.125, rotate=180}, draw=none, from=2-2, to=1-1]
\end{tikzcd}\]
\end{definition}

\begin{remark}
From both $ \mathcal{A}_\omega$ and $ \mathcal{B}_\omega$ are essentially small, and $\mathcal{B}$ being locally small, $\mathcal{G}_{F_*}$ is also essentially small.
\end{remark}

\begin{lemma}
$ \mathcal{G}_{F_*}$ consists of finitely presented objects in $F_*\downarrow\mathcal{B}$.
\end{lemma}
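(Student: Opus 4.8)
The plan is to verify the two defining conditions of finite presentability directly, using that filtered colimits in $F_*\downarrow\mathcal{B}$ are computed componentwise: for a filtered diagram $(f_i\colon F_*(A_i)\to B_i)_{i\in I}$ the colimit is $\colim_i f_i\colon F_*(\colim_i A_i)\to\colim_i B_i$, with colimit inclusions the pairs $(q_i,q_i')$ satisfying $(\colim_i f_i)\, F_*(q_j)=q_j'\, f_j$ for every $j$. Fix an object $f\colon F_*(M)\to B$ of $\mathcal{G}_{F_*}$, presented as the pushout of some $k\colon K\to K'$ in $\mathcal{B}^2_\omega$ along $a\colon K\to F_*(M)$, with coprojection $a'\colon K'\to B$. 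The essential difficulty is that neither $B$ nor $F_*(M)$ is finitely presented in $\mathcal{B}$; the whole argument consists in trading the finite presentability of $f$ in the comma against the finite presentability of the three objects $M$, $K$, $K'$, mediated by the pushout square.

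For the lifting property, take a morphism $(\alpha,\beta)\colon f\to\colim_i f_i$, so $\alpha\colon M\to\colim_i A_i$, $\beta\colon B\to\colim_i B_i$ and $\beta f=(\colim_i f_i)\,F_*(\alpha)$. Since $M\in\mathcal{A}_\omega$ and $I$ is filtered, $\alpha$ factors as $q_{i_0}\alpha'$ with $\alpha'\colon M\to A_{i_0}$. I would then \emph{split $\beta$ through the pushout}: the composite $\beta a'\colon K'\to\colim_i B_i$ factors through some $B_{i_1}$ by finite presentability of $K'$, while the other leg rewrites, using the comma identity and the componentwise description, as $\beta f=(\colim_i f_i)F_*(q_{i_0}\alpha')=q_{i_0}'\, f_{i_0}\, F_*(\alpha')$, hence factors through $B_{i_0}$ via $f_{i_0}F_*(\alpha')$. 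Passing to a common upper bound $i_2$ of $i_0,i_1$ yields maps $F_*(M)\to B_{i_2}$ and $K'\to B_{i_2}$ whose composites with $a$ and $k$ agree \emph{in the colimit} but perhaps not yet at stage $i_2$; finite presentability of $K$ provides a further $d\colon i_2\to i_3$ equalizing the two composites $K\to B_{i_3}$. At this stage the universal property of the pushout assembles a genuine map $b\colon B\to B_{i_3}$, and one checks that $(\alpha',b)$ (after transporting $\alpha'$ to $A_{i_3}$) is a morphism $f\to f_{i_3}$ in the comma satisfying $(q_{i_3},q_{i_3}')(\alpha',b)=(\alpha,\beta)$, possibly after one more refinement to correct the $\mathcal{A}$-component.

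For the refinement property, given two lifts $(\alpha_1,b_1)$ and $(\alpha_2,b_2)$ of the same $(\alpha,\beta)$ landing at indices $i,i'$, I would first use finite presentability of $M$ to equalize $\alpha_1,\alpha_2$ after some $i,i'\to j$, and then run the same pushout-splitting: the two resulting maps $B\to B_j$ agree after postcomposition once their restrictions along $a'$ (two parallel maps out of the finitely presented $K'$) and along $f$ (controlled by the already-equalized $\mathcal{A}$-component) are equalized, the coherence on $K$ being again forced at a finite stage by finite presentability of $K$. Cancellation of pushouts then upgrades the equality of the two restrictions to equality of the induced maps out of $B$, giving the required common refinement. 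The main obstacle is the second paragraph: one must reassemble a map out of the non-finitely-presented object $B$ from finite-stage data on $K$, $K'$ and $F_*(M)$, and the pushout's universal property — together with the comma identity rewriting $\beta f$ in terms of $F_*(\alpha)$ — is exactly what makes this possible.
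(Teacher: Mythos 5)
Your proposal is correct and follows essentially the same route as the paper: lift the $\mathcal{A}$-component via finite presentability of $M$, lift the $\mathcal{B}$-side via finite presentability of $K'$ (the paper packages this as finite presentability of the arrow $k$ in $\mathcal{B}^2$), reconcile the two legs on $K$ by a further refinement, and assemble the map out of the pushout by its universal property --- and your uniqueness step likewise refines the $M$-component and the $k$-component separately before combining them, which is exactly the subtlety the paper's proof flags. The only difference is cosmetic: you unpack the single lift of the square $k\to\colim_{i} f_i$ into separate lifts of its domain and codomain components.
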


\begin{proof}
 Let be $ f_{(-)} : I \rightarrow F_*\downarrow \mathcal{B}$ a filtered diagram, with $ f_i : F_*(A_i) \rightarrow B_i$. Now let be a diagram as below:
\[\begin{tikzcd}
	K & {K'} \\
	{F_*(M)} & s_*K' \\
	{F_*(\underset{i \in I}{\colim} \, A_i)} & {\underset{i \in I}{\colim} \,B_i}
	\arrow["s"', from=1-1, to=2-1]
	\arrow["k", from=1-1, to=1-2]
	\arrow["{k_*s}", from=1-2, to=2-2]
	\arrow["s_*k"{description}, from=2-1, to=2-2]
	\arrow["\lrcorner"{anchor=center, pos=0.125, rotate=180}, draw=none, from=2-2, to=1-1]
	\arrow["F_*(a)"', from=2-1, to=3-1]
	\arrow["{b}", from=2-2, to=3-2]
	\arrow["{\underset{i \in I}{\colim} \,f_i}"', from=3-1, to=3-2]
\end{tikzcd}\]
Then for $ M$ is finitely presented in $\mathcal{A}$ we have a lift for some $i \in I$
\[\begin{tikzcd}
	& M \\
	{A_i} & {\underset{i \in I}{\colim}\, A_i}
	\arrow["{\overline{a}}"', from=1-2, to=2-1]
	\arrow["{q_i}"', from=2-1, to=2-2]
	\arrow["a", from=1-2, to=2-2]
\end{tikzcd}\]
Moreover, this factorization also provides by precomposition with $a$ a lift of $ F_*(a)s$ witnessing finitely presentedness of $K$ relatively to the filtered colimit $F_*(\colim_{i \in I} A_i) \simeq \colim_{i \in I}F_*(A_i) $:
\[\begin{tikzcd}
	& K \\
	& {F_*(M)} \\
	{F_*(A_i)} & {F_*(\underset{i \in I}{\colim}\, A_i)}
	\arrow["{F_*(\overline{a})}"', from=2-2, to=3-1]
	\arrow["{F_*(q_i)}"', from=3-1, to=3-2]
	\arrow["{F_*(a)}", from=2-2, to=3-2]
	\arrow["s"', from=1-2, to=2-2]
\end{tikzcd}\]
On the other side, the arrow $ k : K \rightarrow K'$ is finitely presented in $\mathcal{B}^2$, and hence also lifts as follows for some $i' \in I$:
\[\begin{tikzcd}[column sep=large]
	K & {K'} \\
	{F_*(A_{i'})} & {B_{i'}} \\
	& {F_*(\underset{i \in I}{\colim} \, A_i)} & {\underset{i \in I}{\colim} \,B_i}
	\arrow["t"', from=1-1, to=2-1]
	\arrow["k", from=1-1, to=1-2]
	\arrow["{t'}", from=1-2, to=2-2]
	\arrow["{f_{i'}}"{description}, from=2-1, to=2-2]
	\arrow["{F_*(q_{i'})}"', from=2-1, to=3-2]
	\arrow["{q'_{i'}}", from=2-2, to=3-3]
	\arrow["{\underset{i \in I}{\colim} \,f_i}"', from=3-2, to=3-3]
	\arrow["{F_*(a)s}"{description, pos=0.3}, curve={height=-12pt}, from=1-1, to=3-2, crossing over]
	\arrow["{bk_*a}", curve={height=-12pt}, from=1-2, to=3-3, crossing over]
\end{tikzcd}\]
Hence we have two parallel lifts $(F_*(\overline{a})s, i)$ and $ (t,i')$ of $ F_*(a)s$, so by finite presentedness of $K$ we know there exists some $ j \in I$ and a common refinement $ d: i \rightarrow j$, $ d': i' \rightarrow j$ of those parallel lifts
\[\begin{tikzcd}
	& K \\
	{F_*(A_i)} && {F_*(A_{i'})} \\
	& {F_*(A_j)} \\
	& {F_*(\underset{i \in I}{\colim}\, A_i)}
	\arrow["{F_*(f_d)}"{description}, from=2-1, to=3-2]
	\arrow["t", from=1-2, to=2-3]
	\arrow["{F_*(f_{d'})}"{description}, from=2-3, to=3-2]
	\arrow["{F_*(\overline{a})s}"', from=1-2, to=2-1]
	\arrow["{F_*(q_j)}",from=3-2, to=4-2]
	\arrow["{F_*(q_i)}"', curve={height=18pt}, from=2-1, to=4-2]
	\arrow["{F_*(q_{i'})}", curve={height=-18pt}, from=2-3, to=4-2]
\end{tikzcd}\]
Inserting this common refinement in the following diagram
\[\begin{tikzcd}
	& K & {K'} \\
	{F_*(M)} & {F_*(A_{i'})} & {B_i} \\
	& {F_*(A_j)} & {B_j}
	\arrow["t"{description}, from=1-2, to=2-2]
	\arrow["{F_*(f_{d'})}"{description}, from=2-2, to=3-2]
	\arrow["s"', from=1-2, to=2-1]
	\arrow["k", from=1-2, to=1-3]
	\arrow["{t'}", from=1-3, to=2-3]
	\arrow["{g_{d'}}", from=2-3, to=3-3]
	\arrow["{f_j}"{description}, from=3-2, to=3-3]
	\arrow["{f_i}"{description}, from=2-2, to=2-3]
	\arrow["{F_*(f_d)F_*(\overline{a})}"', from=2-1, to=3-2]
\end{tikzcd}\]
provides us with a commuting square equalizing $ (a, k)$, inducing a factorization through the pushout
\[\begin{tikzcd}
	& K & {K'} \\
	{F_*(M)} & {s_*K'} \\
	& {F_*(A_j)} & {B_j}
	\arrow[""{name=0, anchor=center, inner sep=0}, "s"', from=1-2, to=2-1]
	\arrow["k", from=1-2, to=1-3]
	\arrow["{f_j}"{description}, from=3-2, to=3-3]
	\arrow["{F_*(f_d)F_*(\overline{a})}"', from=2-1, to=3-2]
	\arrow["{s_*k}"{description}, from=2-1, to=2-2]
	\arrow["{k_*s}"{description}, from=1-3, to=2-2]
	\arrow["{g_{d'}t'}", from=1-3, to=3-3]
	\arrow["{\overline{b}}"{description}, dashed, from=2-2, to=3-3]
	\arrow["\lrcorner"{anchor=center, pos=0.125, rotate=180}, draw=none, from=2-2, to=0]
\end{tikzcd}\]
This provides a lift $((f_d\overline{a}, \overline{ b}), j)$ of $ (a,b)$ in the comma $ F_*\downarrow \mathcal{B}$ as desired. \\

Now we must check that any two parallel lifts have a common refinement. Suppose we have a situation as below 
\[\begin{tikzcd}
	& K & {K'} \\
	& {F_*(M)} & {s_*K'} \\
	{F_*(A_{i})} & {\;B_{i}} \\
	&& {F_*(A_{i'})} & {B_{i'}} \\
	& {F_*(\underset{i \in I}{\colim} \, A_i)} & {\underset{i \in I}{\colim} \,B_i}
	\arrow["{F_*(q_{i})}"', from=3-1, to=5-2]
	\arrow["{\underset{i \in I}{\colim} \,f_i}"', from=5-2, to=5-3]
	\arrow["F_*(a)"', from=2-2, to=3-1]
	\arrow["{s_*k}"{description}, from=2-2, to=2-3]
	\arrow["b"{description}, from=2-3, to=3-2]
	\arrow["{f_{i}}"{description}, from=3-1, to=3-2]
	\arrow["{q'_{i}}"{description}, from=3-2, to=5-3]
	\arrow["{f_{i'}}"{description}, from=4-3, to=4-4]
	\arrow["{q'_{i'}}", from=4-4, to=5-3]
	\arrow["{b'}", from=2-3, to=4-4]
	\arrow["k", from=1-2, to=1-3]
	\arrow["s"', from=1-2, to=2-2]
	\arrow[from=1-3, to=2-3]
	\arrow["\lrcorner"{anchor=center, pos=0.125, rotate=180}, draw=none, from=2-3, to=1-2]
	\arrow["{F_*(a')}"{description}, from=2-2, to=4-3, crossing over]
	\arrow["{F_*(q_{i'})}"{description}, from=4-3, to=5-2, crossing over]
\end{tikzcd}\]
where the two lifts $ ((a,b), i)$ and $ ((a',b'),i')$ define the same arrow $ s_*k \rightarrow \colim_{i \in I}f_i$ in $ F_*\downarrow \mathcal{B}$. Then from $M$ is finitely presentable, there is a common refinement $ d : i \rightarrow j$ and $ d':i' \rightarrow j $ for the parallel lifts $ a,a'$ in $\mathcal{A}$. On the other hand we also have a common refinement $ e : i \rightarrow j'$ and $ e' : i' \rightarrow j'$ for the parallel lifts $ ((F_*(a)s, b k*s),i)$ and $ ((F_*(a')s, b' k*s),i')$ from $k$ to $ \colim_{i \in I}f_i$ in $\mathcal{B}^2$. Moreover, because the left part of those parallel lifts factorize through $ s$, we are provided with two further parallel lifts $ (da = d'a',j) $ and $ (ea=ea', j')$ in $ \mathcal{A}$, which have hence themselves a further refinement $ h : j \rightarrow l$, $ h': j' \rightarrow l$ in $I$. Hence by a similar argument as above, this common refinement equalizes $s $ and $ k$, hence factorizes through the pushout. This achieves to prove that $ s_*k$ is finitely presented in $ F_*\downarrow \mathcal{B}$. (Beware that we need to consider a refinement relatively to $M$ and a refinement relatively to $k$ separately before refining them jointly, as the sole common refinement $(e,e')$ relatively to $k$ may fail to equalize $ F_*(a)$ and $ F_*(a')$.) \\
\end{proof}

\begin{lemma}
$ \mathcal{G}_{F_*}$ form a dense generator.
\end{lemma}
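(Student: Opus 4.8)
The plan is to exhibit $f : F_*(A) \to B$ as the canonical filtered colimit of the diagram $\mathcal{G}_{F_*}\downarrow f \to F_*\downarrow\mathcal{B}$, reusing the coslice density result of the previous section fibrewise over the finitely presented approximations of $A$. First I would reduce the codomain object $A$. Writing $A \simeq \colim_{(M,m)\in\mathcal{A}_\omega\downarrow A} M$ as its canonical filtered colimit and using that $F_*$ is finitary, one gets $F_*(A)\simeq\colim_{(M,m)}F_*(M)$. For each $(M,m)$ set $g_M := f\circ F_*(m): F_*(M)\to B$, an object of the comma equipped with a canonical morphism $(m,1_B): g_M\to f$. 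Since filtered colimits in $F_*\downarrow\mathcal{B}$ are computed pointwise (domain in $\mathcal{A}$ through $F_*$, codomain in $\mathcal{B}$), and the codomains here form the constant diagram at $B$ over the connected category $\mathcal{A}_\omega\downarrow A$, these morphisms exhibit
\[ f \simeq \underset{(M,m)\in\mathcal{A}_\omega\downarrow A}{\colim}\, g_M \]
in $F_*\downarrow\mathcal{B}$.

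Next I would resolve each $g_M$ fibrewise. Viewed in the coslice $F_*(M)\downarrow\mathcal{B}$, the object $g_M$ is the filtered colimit of $\mathcal{G}_{F_*(M)}\downarrow g_M$ by \cref{canonical cone in coslice}. As $M$ is finitely presented, every object of $\mathcal{G}_{F_*(M)}$ is by composition of pushouts also an object of $\mathcal{G}_{F_*}$; and the inclusion $\iota_M : F_*(M)\downarrow\mathcal{B}\to F_*\downarrow\mathcal{B}$ that keeps the domain object $M$ fixed preserves filtered colimits, so this resolution also holds in the comma. Assembling over the base, the transition functors being the pushout functors $F_*(u)_*$ for $u: M\to N$ over $A$ (which land in $\mathcal{G}_{F_*(N)}$ again by composition of pushouts), I would organize these data into the Grothendieck construction $\mathcal{J} := \int_{(M,m)\in\mathcal{A}_\omega\downarrow A}(\mathcal{G}_{F_*(M)}\downarrow g_M)$. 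The pushout adjunction $F_*(u)_*\dashv F_*(u)^!$ together with the universal property of the pushout then identifies $\mathcal{J}$ with $\mathcal{G}_{F_*}\downarrow f$: an object $((M,m),(h,w))$ is sent to $h\in\mathcal{G}_{F_*}$ equipped with the comma morphism $(m,w): h\to f$, and one checks this assignment is fully faithful and bijective on objects.

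It then remains to run a Fubini argument. The category $\mathcal{J}$ is filtered because its base $\mathcal{A}_\omega\downarrow A$ and all its fibres $\mathcal{G}_{F_*(M)}\downarrow g_M$ are filtered; and the iterated filtered colimit over $\mathcal{J}$ of the projection to the comma computes $\colim_{(M,m)}\colim_{\mathcal{G}_{F_*(M)}\downarrow g_M}\iota_M \simeq \colim_{(M,m)} g_M \simeq f$. Transporting along the identification $\mathcal{J}\simeq\mathcal{G}_{F_*}\downarrow f$ yields that $\mathcal{G}_{F_*}\downarrow f$ is filtered and that $f\simeq\colim\,\mathcal{G}_{F_*}\downarrow f$, which is exactly density. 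As an alternative more in the style of \cref{lifting of diagrams}, one could instead build a single finitely presented index category whose objects are tuples $(M\xrightarrow{m}A,\ k\in\mathcal{B}^2_\omega,\ a:K\to F_*(M),\ c:K'\to B)$ with $f\circ F_*(m)\circ a = c\circ k$, send it into $\mathcal{G}_{F_*}\downarrow f$ by the pushout $\langle f\circ F_*(m),c\rangle : a_*k\to f$, and prove this functor essentially surjective (by definition of $\mathcal{G}_{F_*}$), full, and with filtered domain, so as to invoke \cref{esofull cofinal}.

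The main obstacle lies precisely in establishing filteredness of the assembled index category together with the correctness of the colimit, and it is concentrated in the interaction between the two approximation processes. Refining the codomain part of a pair of parallel lifts in $\mathcal{B}$ (to absorb the data $k$) need not equalize the $\mathcal{A}$-side maps $F_*(a)$ and $F_*(a')$, so one is forced to refine separately with respect to $M$ in $\mathcal{A}$ and with respect to $k$ in $\mathcal{B}$ before combining the two refinements jointly, exactly as flagged in the parenthetical warning of the preceding lemma. In the explicit route this is what makes the fullness of the comparison functor delicate; in the fibred route it is what must be verified, via \cref{Anel lemma}, to know that the transition pushout functors genuinely witness filteredness of $\mathcal{J}$.
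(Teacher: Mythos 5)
Your proof follows essentially the same route as the paper: decompose $F_*(A)$ over its finitely presented approximations $\mathcal{A}_\omega\downarrow A$, apply the coslice density result of \cref{canonical cone in coslice} fibrewise to each $g_M = f\circ F_*(m)$, and assemble by an iterated filtered colimit, with filteredness of the index category established by the same two-stage refinement (first in $\mathcal{A}$ relative to $M$, then in $\mathcal{B}^2$ relative to $k$) that the paper carries out. Your Grothendieck construction $\mathcal{J}$ is just a cleaner packaging of the paper's categories $I_f$ and $I_{f,a}$, and you correctly identify the one delicate point (the two refinements cannot be collapsed into one) that the paper also flags.
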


\begin{proof}
Let be $ f : F_*(A) \rightarrow B$. Before anything, remark that the canonical cone $ A \simeq \mathcal{B}_\omega \downarrow A$ of $A $ is sent by $F_*$ to a colimiting cone
\[ F_*(A) \simeq  \underset{ a: M \rightarrow A\atop M \in \mathcal{A}_\omega}{\colim} F_*(M) \] 
This fact will be used twice in the following. \\

First, we must prove that the category consisting of all arrows of the form $ s_*k \rightarrow f$ is filtered in $ F_*\downarrow \mathcal{B}$. Let be a situation as below: 
\[\begin{tikzcd}[row sep=small]
	{K_1} && {K_1'} \\
	& {K_2} && {K_2'} \\
	{F_*(M_1)} && {s_{1*}K_1'} \\
	& {F_*(M_2)} && {s_{2*}K_2'} \\
	& {F_*(A)} && B
	\arrow["{s_1}"', from=1-1, to=3-1]
	\arrow["{k_1}", from=1-1, to=1-3]
	\arrow[from=1-3, to=3-3]
	\arrow["{s_{1*}k_1}"{description, pos=0.2}, from=3-1, to=3-3]
	\arrow["{F_*(a_1)}"', from=3-1, to=5-2]
	\arrow["{F_*(a_2)}", from=4-2, to=5-2]
	\arrow["{b_1}"{description, pos=0.3}, from=3-3, to=5-4]
	\arrow["{b_2}", from=4-4, to=5-4]
	\arrow[from=2-4, to=4-4]
	\arrow["{k_2}"{description, pos=0.4}, from=2-2, to=2-4, crossing over]
	\arrow["{s_2}"{description, pos=0.3}, from=2-2, to=4-2, crossing over]
	\arrow["\lrcorner"{anchor=center, pos=0.125, rotate=180}, draw=none, from=3-3, to=1-1]
	\arrow["\lrcorner"{anchor=center, pos=0.125, rotate=180}, draw=none, from=4-4, to=2-2]
	\arrow["f"', from=5-2, to=5-4]
	\arrow["{s_{2*}k_2}"{description}, from=4-2, to=4-4, crossing over]
\end{tikzcd}\]
Then $ (F_*(a_1)s_1, b_1k_{1*}s_1)$, $(F_*(a_2)s_2, b_2k_{2*}s_2)$, are part of the canonical cone of $ f$ in $ \mathcal{B}^2$, which is filtered, so that there exists some common refinement as below
\[\begin{tikzcd}[column sep=small, row sep=small]
	{K_1} &&& {K'_1} \\
	&& {K_2} &&& {K_2'} \\
	& K &&& {K'} \\
	& {F_*(A)} &&& B
	\arrow["{k_1}", from=1-1, to=1-4]
	\arrow["{l_1}"', from=1-1, to=3-2]
	\arrow["{l_2}"{description}, from=2-3, to=3-2]
	\arrow["{l_1'}"{pos=0.3}, from=1-4, to=3-5]
	\arrow["{l_2'}", from=2-6, to=3-5]
	\arrow["k_2"{description, pos=0.3}, from=2-3, to=2-6, crossing over]
	\arrow["t"', from=3-2, to=4-2]
	\arrow["{t'}", from=3-5, to=4-5]
	\arrow["k"{description}, from=3-2, to=3-5]
	\arrow["f"', from=4-2, to=4-5]
\end{tikzcd}\]
But from the filtered colimit decomposition of $ F_*(A) $ above, we can find a lift $s$ of $ t$ through some $ F_*(a) : F_*(M) \rightarrow F_*(A)$ with $ M$ finitely presented in $\mathcal{A}$, and moreover, this $M$ can be chosen as equipped with a common refinement $ u_1 : M_1 \rightarrow M$ of $ a_1$ and $ u_2 : M_2 \rightarrow M$ of $ a_2$ as below
\[\begin{tikzcd}
	{K_1} && {K_2} \\
	{F_*(M_1)} & K & {F_*(M_2)} \\
	& {F_*(M)} \\
	& {F_*(A)}
	\arrow["{s_1}"', from=1-1, to=2-1]
	\arrow["{l_1}", from=1-1, to=2-2]
	\arrow["{l_2}"', from=1-3, to=2-2]
	\arrow["{s_2}", from=1-3, to=2-3]
	\arrow["s"{description}, from=2-2, to=3-2]
	\arrow["{F_*(u_1)}"{description}, from=2-1, to=3-2]
	\arrow["{F_*(u_2)}"{description}, from=2-3, to=3-2]
	\arrow[from=3-2, to=4-2]
	\arrow["{F_*(a_1)}"', curve={height=18pt}, from=2-1, to=4-2]
	\arrow["{F_*(a_2)}", curve={height=-18pt}, from=2-3, to=4-2]
\end{tikzcd}\]
But as $ l1'$ and $ l_2'$ also are a common refinement of $ b_1$ and $b_2$, we end with a pair of morphisms $ v_1, v_2$ between the induced pushouts as seen below
\[\begin{tikzcd}[row sep=large]
	&&& {K_1'} && {K_2'} \\
	{K_1} && {K_2} && {K'} \\
	& K && {s_{1*}K_1'} && {s_{2*}K_2'} \\
	{F_*(M_1)} && {F_*(M_2)} && {s_*K'} \\
	& {F_*(M)} &&& B \\
	& {F_*(A)}
	\arrow["{s_1}"', from=2-1, to=4-1]
	\arrow["{b_1}"{description, pos=0.8}, curve={height=12pt}, from=3-4, to=5-5]
	\arrow["s"{description}, from=3-2, to=5-2]
	\arrow["{F_*(u_1)}"{description}, from=4-1, to=5-2]
	\arrow["{F_*(u_2)}"{description}, from=4-3, to=5-2]
	\arrow[from=5-2, to=6-2]
	\arrow["{F_*(a_1)}"', curve={height=18pt}, from=4-1, to=6-2]
	\arrow["{l_1'}"{description, pos=0.2}, from=1-4, to=2-5]
	\arrow["{l_2'}"{description}, from=1-6, to=2-5]
	\arrow["{s_*k}"{description}, from=5-2, to=4-5, crossing over]
	\arrow["{s_{1*}K_1'}"{description, pos=0.5}, shift left=2, from=4-1, to=3-4]
	\arrow["{v_1}"{description, pos=0.3}, dashed, from=3-4, to=4-5]
	\arrow[from=1-4, to=3-4]
	\arrow[from=1-6, to=3-6]
	\arrow["{v_2}"{description}, dashed, from=3-6, to=4-5]
	\arrow["f"', from=6-2, to=5-5]
	\arrow[from=4-5, to=5-5]
	\arrow[""{name=0, anchor=center, inner sep=0}, "{k_1}"{description}, from=2-1, to=1-4]
	\arrow[from=2-5, to=4-5]
	\arrow[""{name=1, anchor=center, inner sep=0}, "{k_2}"{description, pos=0.7}, from=2-3, to=1-6, crossing over]
	\arrow["\lrcorner"{anchor=center, pos=0.125, rotate=180}, shift left=3, draw=none, from=4-5, to=3-4]
	\arrow["{s_{2*}K_2'}"{description, pos=0.2}, shift right=1, from=4-3, to=3-6, crossing over]
	\arrow["k"{description}, shift left=1, from=3-2, to=2-5, crossing over]
	\arrow["{s_2}"{description, pos=0.4}, from=2-3, to=4-3, crossing over]
	\arrow["{l_1}"{description}, from=2-1, to=3-2, crossing over]
	\arrow["{l_2}"{description}, from=2-3, to=3-2, crossing over]
	\arrow["{b_2}", curve={height=-12pt}, from=3-6, to=5-5]
	\arrow["{F_*(a_2)}"{description}, curve={height=-18pt}, from=4-3, to=6-2, crossing over]
	\arrow["\lrcorner"{anchor=center, pos=0.125, rotate=180}, draw=none, from=3-4, to=0]
	\arrow["\lrcorner"{anchor=center, pos=0.125, rotate=180}, draw=none, from=3-6, to=1]
\end{tikzcd}\]
which can be seen as a exhibiting $ s_*k $ as equipped with a common refinement $ (u_1, v_1)$ and $ (u_2, v_2)$ for $s_{1*}K_1 $ and $s_{2*}K_2 $ respectively. Proving that a parallel pair between two such arrows above $ f$ also are equalized by a common refinement involves similar arguments and can be left as an exercise. \\

Now we want to prove that $ f$ is a filtered colimit of all the $ s_*k$ above it. First, observe that $f $ decomposes as a filtered colimit in $\mathcal{B}^2$ $ f \simeq  \colim\, \mathcal{B}^2_\omega \downarrow f$, which in particular induces that $ B \simeq \colim_{\mathcal{B}^2_\omega \downarrow f} \cod(k)$ for $\cod$ is finitary. But remember that $F_*$ also transported the canonical cone of $A$ into a filtering colimit cocone, and as consequence for each $(b,b') :  k \rightarrow f$ in $\mathcal{B}^2$ we have a factorization as below 
\[\begin{tikzcd}
	& K && {K'} \\
	{F_*(M)} && {s_*K'} \\
	& {F_*(A)} && B
	\arrow["f"', from=3-2, to=3-4]
	\arrow["{k'}", from=1-2, to=1-4]
	\arrow["{b'}"{pos=0.7}, from=1-4, to=3-4]
	\arrow["s"', from=1-2, to=2-1]
	\arrow[from=1-4, to=2-3]
	\arrow["{\langle F_*(a),b \rangle}"{description}, dashed, from=2-3, to=3-4]
	\arrow["{F_*(a)}"', from=2-1, to=3-2]
	\arrow["{s_*k}"{description, pos=0.7}, from=2-1, to=2-3]
	\arrow["b"{description, pos=0.7}, from=1-2, to=3-2, crossing over]
	\arrow["\lrcorner"{anchor=center, pos=0.125, rotate=180}, draw=none, from=2-3, to=1-2]
\end{tikzcd}\]
Moreover, if we denote by $I_f$ the category of all those factorizations $ (k, s,a,b,b')$, which is obviously filtered from the previous items, we have that $ F_*(A) \simeq \colim_{(k, s,a,b,b') \in I_f} F_*(\dom(a)) $. From the previous observation, the following projection is essentially surjective
\[ I_f \twoheadrightarrow \mathcal{B}_\omega^f\downarrow f \]
Moreover, in each $ a : M \rightarrow A$ we can define the category $ I_{f,a}$ consisting of all the quintuplets of the form $ (k,s,a,b,b')$ with $a$ fixed; we have an inclusion $ I_{f,a} \hookrightarrow I_f$, and a filtered colimit 
\[ F_*(M) = \underset{(k,s,a,b,b') \in I_{f,a}}{\colim} \; K \]
But now from \cref{canonical cone in coslice} we know that actually computing the colimit of all $ k$ over $I_{f,a}$ is the same as computing the colimit of the corresponding pushouts $ s_*k$, that is we have an equality of filtered colimits in $ F_*(M)\downarrow \mathcal{B}$
\[  \underset{(k,s,a,b,b') \in I_{f,a}}{\colim} \; k \simeq \underset{(k,s,a,b,b') \in I_{f,a}}{\colim} \; s_*k \]
and also, for the codomain functor preserves filtered colimits, we have a filtered colimit in $\mathcal{B}$
\[  \underset{(k,s,a,b,b') \in I_{f,a}}{\colim} \; K' \simeq \underset{(k,s,a,b,b') \in I_{f,a}}{\colim} \; s_*K' \]
But from the sequence of colimit decomposition
\[ F_*(A) \simeq \underset{\mathcal{A}_\omega \downarrow A}{\colim} \, F_*(M) \simeq  \underset{\mathcal{A}_\omega \downarrow A}{\colim} \underset{(k,s,a,b,b') \in I_{f,a}}{\colim} \; k  \]
we can use the isomorphism in each $ a$ to get an isomorphism between the following colimits:
\[ f \simeq  \underset{\mathcal{A}_\omega \downarrow A}{\colim} \underset{(k,s,a,b,b') \in I_{f,a}}{\colim} \; k \simeq  \underset{\mathcal{A}_\omega \downarrow A}{\colim} \underset{(k,s,a,b,b') \in I_{f,a}}{\colim} \; s_*k \]
This achieves to prove that the arrows of the form $ s_*k$ are a generator of finitely presented object in $F_*\downarrow \mathcal{B}$. For $\mathcal{A}_\omega$ and $ \mathcal{B}_\omega^2$ both are essentially small while $ \mathcal{B}$ is locally small, this category itself is essentially small. This achieves to prove that $ F_*\downarrow \mathcal{B}$ is finitely accessible. And from what we said about existence of small limits, it is moreover finitely accessible.
\end{proof}

From
Finally, we would like to control explicitly the generator of finitely presented objects of $ F_*\downarrow \mathcal{B}$: we claim that not only pushouts maps of $\mathcal{G}_{F_*}$ are a generator of etale objects, but that any finitely presented object is actually of this form: 

\begin{lemma}
$\mathcal{G}_{F_*}$ is closed under retracts. Hence any finitely presented object of $F_*\downarrow \mathcal{B}$ is in $\mathcal{G}_{F_*}$. 
\end{lemma}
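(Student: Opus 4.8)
The plan is to avoid repeating the delicate idempotent-splitting argument of \cref{closed under retract} and instead \emph{reduce} the statement to it by a base-change. Suppose $f : F_*(M) \to B$ is a retract in $F_*\downarrow\mathcal{B}$ of an object $g = s_*k$ of $\mathcal{G}_{F_*}$, where $k : K \to K'$ is in $\mathcal{B}^2_\omega$, $s : K \to F_*(M')$ with $M' \in \mathcal{A}_\omega$, and $g$ is the pushout of $k$ along $s$, with coprojections $g : F_*(M') \to s_*K'$ and $\iota : K' \to s_*K'$. I would write the retract as a section $(u,v) : f \to g$ and a retraction $(u',v') : g \to f$ with $(u'u,v'v) = (1_M,1_B)$; thus $u'u = 1_M$ and $v'v = 1_B$, so that $M$ is a retract of $M'$ in $\mathcal{A}$ and $B$ a retract of $s_*K'$ in $\mathcal{B}$, while the two comma squares read $vf = gF_*(u)$ and $v'g = fF_*(u')$.

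First I would dispatch the domain condition: since $\mathcal{A}_\omega$ is closed under retracts and $M'$ is finitely presented, so is $M$. It therefore only remains to exhibit $f$ as a pushout of a finitely presented map over $F_*(M)$. The heart of the argument is a base-change bringing us into a coslice. Setting $a := F_*(u')s : K \to F_*(M)$ and pasting pushouts, the pushout $a_*k$ of $k$ along $a$ coincides with $F_*(u')_*g$; in particular $a_*k$ is an object of the \emph{coslice} generator $\mathcal{G}_{F_*(M)}$. I claim that $f$ is a retract of $a_*k$ \emph{in the coslice} $F_*(M)\downarrow\mathcal{B}$. Denoting by $p : s_*K' \to a_*K'$ the comparison coprojection over $F_*(u')$, so that $pg = (a_*k)F_*(u')$ and $p\iota$ is the pushout coprojection of $a_*k$, the map $pv : B \to a_*K'$ is a coslice morphism $f \to a_*k$, since $pvf = pgF_*(u) = (a_*k)F_*(u'u) = a_*k$. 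In the other direction, the pair $(f, v'\iota)$ satisfies $(v'\iota)k = v'gs = fF_*(u')s = fa$ and hence induces, by the universal property of $a_*K'$, a coslice morphism $\bar v' : a_*k \to f$. One then checks $\bar v' p = v'$ (both restrict to $v'g$ along $g$ and to $v'\iota$ along $\iota$), whence $\bar v'\circ(pv) = v'v = 1_B$: this is exactly a retract of $a_*k$ by $f$ in $F_*(M)\downarrow\mathcal{B}$.

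Now \cref{closed under retract} applies: $\mathcal{G}_{F_*(M)}$ is closed under retracts, so $f \in \mathcal{G}_{F_*(M)}$, i.e. $f$ is the pushout of a map in $\mathcal{B}^2_\omega$ along some arrow into $F_*(M)$. Combined with $M \in \mathcal{A}_\omega$, this is precisely the assertion $f \in \mathcal{G}_{F_*}$, proving closure under retracts. For the final clause, the density lemma just established identifies $F_*\downarrow\mathcal{B}$ with $\Ind(\mathcal{G}_{F_*})$, so that every finitely presented object of $F_*\downarrow\mathcal{B}$ is a retract of an object of $\mathcal{G}_{F_*}$; closure under retracts then forces it to lie in $\mathcal{G}_{F_*}$.

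The only genuinely non-formal step is the base-change of the second paragraph, where the retraction must be produced over the \emph{fixed} base $F_*(M)$ rather than over $F_*(M')$; the identity $\bar v' p = v'$, read off from the universal property of the pushout $s_*K'$, is what makes the section and retraction compose to the identity, and it is here that the compatibility $v'g = fF_*(u')$ of the retraction with $F_*$ is used. Everything else is bookkeeping, and in particular this route never needs to reopen the coequalizer computation hidden inside \cref{closed under retract}.
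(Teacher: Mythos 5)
Your proof is correct and follows essentially the same route as the paper's: both reduce the statement to \cref{closed under retract} by pushing the given retract forward along $F_*$ of the retraction on domains so as to realize $f$ as a retract, in the coslice $F_*(M)\downarrow\mathcal{B}$, of a pushout of a map in $\mathcal{B}^2_\omega$, with $M\in\mathcal{A}_\omega$ coming from closure of $\mathcal{A}_\omega$ under retracts. Your write-up merely reverses the order of the two clauses (proving retract-closure for an arbitrary retract first, then invoking density for the finitely presented case) and spells out the verification $\bar v' p = v'$ that the paper leaves implicit in its diagram.
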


\begin{proof}
Suppose that $ f :F_*(A) \rightarrow B$ is finitely presented. Then from the expression of $ f $ as the filtered colimit $ f \simeq \colim \;{ \mathcal{G}_{F_*}\downarrow f} $ established in the previous item, we can exhibit $f$ as a retract of some arrow in $\mathcal{G}_{F_*}$
\[\begin{tikzcd}
	& K && {K'} \\
	& {F_*(M)} && {s_*K'} \\
	{F_*(A)} && B \\
	&& {F_*(A)} && B
	\arrow["{F_*(s)}", from=3-1, to=2-2]
	\arrow[Rightarrow, no head, from=3-1, to=4-3]
	\arrow["s"', from=1-2, to=2-2]
	\arrow["f"{description}, from=3-1, to=3-3]
	\arrow[Rightarrow, no head, from=3-3, to=4-5]
	\arrow["{s'}"{description}, from=3-3, to=2-4]
	\arrow["{r'}"{description}, from=2-4, to=4-5]
	\arrow["{s_*k}"{description}, from=2-2, to=2-4]
	\arrow["k", from=1-2, to=1-4]
	\arrow[from=1-4, to=2-4]
	\arrow["f"{description}, from=4-3, to=4-5]
	\arrow["{F_*(r)}"{description, pos=0.7}, from=2-2, to=4-3, crossing over]
	\arrow["\lrcorner"{anchor=center, pos=0.125, rotate=180}, draw=none, from=2-4, to=1-2]
\end{tikzcd}\]
But then from $\mathcal{A}_\omega$ is closed under retracts, $A$ must be in $\mathcal{A}_\omega$. Moreover, this retraction can be transferred into a retract of a finitely presented object in the coslice $ F_*(A)\downarrow \mathcal{B}$
\[\begin{tikzcd}
	& K && {K'} \\
	& {F_*(M)} && {s_*K'} \\
	& {F_*(A)} && {(F_*(r)s)_*K'} \\
	{} && B \\
	&&&& B
	\arrow[Rightarrow, no head, from=4-3, to=5-5]
	\arrow["{s_*k_*F_*(r)s'}"{description, pos=0.6}, from=4-3, to=3-4]
	\arrow["{\langle f, r' \rangle}"{description}, from=3-4, to=5-5]
	\arrow[from=3-2, to=3-4]
	\arrow["{s_*k_*F_*(r)}", from=2-4, to=3-4]
	\arrow[""{name=0, anchor=center, inner sep=0}, "k", from=1-2, to=1-4]
	\arrow[""{name=1, anchor=center, inner sep=0}, "{s_*k}"{description}, from=2-2, to=2-4]
	\arrow["s"', from=1-2, to=2-2]
	\arrow[from=1-4, to=2-4]
	\arrow["{F_*(r)}"', from=2-2, to=3-2]
	\arrow["f"{description}, from=3-2, to=4-3]
	\arrow["f"{description}, from=3-2, to=5-5, crossing over]
	\arrow["\lrcorner"{anchor=center, pos=0.125, rotate=180}, draw=none, from=2-4, to=0]
	\arrow["\lrcorner"{anchor=center, pos=0.125, rotate=180}, draw=none, from=3-4, to=1]
\end{tikzcd}\]
But from \cref{closed under retract}, we know this forces $ f =  \langle f, r'\rangle (F_*(r)s)_*k$ to be obtained as some pushout of a finitely presented map: hence $ f$ is actually in $\mathcal{G}_{F_*}$. 
\end{proof}

Putting the previous lemma altogether we get the main result of this section:

\begin{theorem}
For a morphism of locally finitely presentable categories $F : \mathcal{A} \rightarrow \mathcal{B}$, the comma category $ F_*\downarrow \mathcal{B} $ is locally finitely presentable as well, and moreover we have an equivalence
\[  (F_* \downarrow \mathcal{B})_\omega \simeq \mathcal{G}_{F_*}  \]
\end{theorem}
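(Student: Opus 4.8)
The plan is to assemble the three preceding lemmas of this section into the two assertions of the theorem, using the standard characterization of locally finitely presentable categories as those that are cocomplete and admit an essentially small dense (hence strong) generator of finitely presented objects, as recorded in \cite{adamek1994locally}.

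First I would record that $F_*\downarrow\mathcal{B}$ is finitely accessible. The dense-generator lemma exhibits $\mathcal{G}_{F_*}$ as a dense generator made of finitely presented objects; the remark following the definition guarantees that $\mathcal{G}_{F_*}$ is essentially small, since $\mathcal{A}_\omega$ and $\mathcal{B}^2_\omega$ are essentially small and $\mathcal{B}$ is locally small; and the discussion opening this section shows that filtered colimits in $F_*\downarrow\mathcal{B}$ exist and are computed pointwise in the arrow category, because $F_*$ is finitary. These three facts are exactly the data witnessing finite accessibility. Next I would invoke completeness: since $F_*$ is continuous, limits in $F_*\downarrow\mathcal{B}$ are likewise computed pointwise in $\mathcal{B}^2$, so the comma is complete. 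An accessible complete category is cocomplete \cite{adamek1994locally}, and a cocomplete category carrying an essentially small dense generator of finitely presented objects is locally finitely presentable; this yields the first assertion, with no inflation of the rank, precisely because the characterization I apply is already phrased at rank $\omega$.

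For the identification $(F_*\downarrow\mathcal{B})_\omega\simeq\mathcal{G}_{F_*}$, one inclusion is exactly the lemma stating that every object of $\mathcal{G}_{F_*}$ is finitely presented in $F_*\downarrow\mathcal{B}$. The reverse inclusion is the content of the closure-under-retracts lemma: any finitely presented $f$ is, by density, a filtered colimit $f\simeq\colim\,\mathcal{G}_{F_*}\downarrow f$ of objects of $\mathcal{G}_{F_*}$, so its identity factors through one of these terms, presenting $f$ as a retract of an object of $\mathcal{G}_{F_*}$; closure under retracts then places $f$ itself in $\mathcal{G}_{F_*}$. The two inclusions together give the equivalence.

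I expect the final assembly to be routine bookkeeping; the genuine difficulty lies upstream, in the three input lemmas, most acutely in closure under retracts, which is what prevents the finitely presented objects from being merely retracts of pushouts of $\mathcal{B}^2_\omega$-maps rather than honest such pushouts. The one point I would double-check in the assembly is that passing through ``accessible plus complete implies cocomplete'' does not secretly raise the rank; this is harmless here, because the concluding step only uses cocompleteness together with the rank-$\omega$ dense generator already in hand, and finitely presented objects are in any case closed under finite colimits in a cocomplete category.
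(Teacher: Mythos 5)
Your proposal is correct and follows essentially the same route as the paper: it assembles the three preceding lemmas (objects of $\mathcal{G}_{F_*}$ are finitely presented, $\mathcal{G}_{F_*}$ is an essentially small dense generator, and it is closed under retracts) together with the pointwise computation of limits and filtered colimits in $\mathcal{B}^2$ to conclude local finite presentability and the identification $(F_*\downarrow\mathcal{B})_\omega\simeq\mathcal{G}_{F_*}$. The paper's own proof is just a terser version of this same bookkeeping, deferring all substance to the upstream lemmas exactly as you do.
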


\begin{proof}
From what precedes we know that $ F_*\downarrow \mathcal{B}$ is finitely accessible and $ \mathcal{G}_{F_*}$ is exactly its generator of finitely presented objects. Moreover, from what we saw at the begining of the section, it has filtered colimits and small limits both computed in $ \mathcal{B}^2$. Therefore it is locally finitely presentable.\\
\end{proof}

\begin{remark}
As a locally finitely presentable category, $ F \downarrow \mathcal{B}$ also has arbitrary colimits. However they are not computed in the arrow categories, contrarily to filtered ones. 
\end{remark}

\begin{proposition}
The codomain functor $ \cod : F_*\downarrow \mathcal{B} \rightarrow \mathcal{B}$ has a left adjoint, which moreover sends finitely presented objects on finitely presented objects, and the pair $ \cod^*\dashv \cod$ defines a morphism of locally finitely presentable categories. 
\end{proposition}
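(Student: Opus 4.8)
The plan is to construct the left adjoint $\cod^*$ explicitly, mirroring the treatment of the coslice in the previous section, and then to read preservation of finitely presented objects straight off the formula. Writing $0_{\mathcal{A}}$ for the initial object of $\mathcal{A}$, I would define $\cod^*$ on an object $C$ of $\mathcal{B}$ by
\[ \cod^*(C) = \big(\, \iota \colon F_*(0_{\mathcal{A}}) \longrightarrow F_*(0_{\mathcal{A}}) + C \,\big), \]
the inclusion of the first summand, and on an arrow $c \colon C_1 \rightarrow C_2$ by the comma morphism $(1_{0_{\mathcal{A}}},\, 1_{F_*(0_{\mathcal{A}})} + c)$. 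The one point that needs care is that, $F_*$ being merely a right adjoint, $F_*(0_{\mathcal{A}})$ is in general \emph{not} initial in $\mathcal{B}$, so the naive guess sending $C$ to some hypothetical arrow $F_*(0_{\mathcal{A}}) \rightarrow C$ is not available; it is the coproduct $F_*(0_{\mathcal{A}}) + C$ that supplies the correct codomain. Identifying this shape is really the only subtle step of the argument.

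Next I would verify the adjunction $\cod^* \dashv \cod$ through its universal property, taking as unit at $C$ the inclusion of the second summand $\eta_C \colon C \rightarrow F_*(0_{\mathcal{A}}) + C$. Given $f \colon F_*(A) \rightarrow B$, a morphism $\cod^*(C) \rightarrow f$ in $F_*\downarrow\mathcal{B}$ is a pair $(g,h)$ with $g \colon 0_{\mathcal{A}} \rightarrow A$ and $h \colon F_*(0_{\mathcal{A}}) + C \rightarrow B$ satisfying $h\iota = f\,F_*(g)$. Since $0_{\mathcal{A}}$ is initial the component $g$ is uniquely forced, and then $h$ is determined by its two coproduct components: the first is forced to equal $f\,F_*(g)$, while the second, $h\eta_C \colon C \rightarrow B$, is entirely free. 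Hence sending $(g,h)$ to $h\eta_C$ yields a bijection, natural in both variables,
\[ (F_*\downarrow\mathcal{B})[\cod^*(C), f] \simeq \mathcal{B}[C, \cod f], \]
which is exactly the desired adjunction.

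For preservation of finitely presented objects I would take $K$ in $\mathcal{B}_\omega$ and observe that $\cod^*(K)$ sits in the pushout square
\[ \begin{tikzcd}
	{0_{\mathcal{B}}} & K \\
	{F_*(0_{\mathcal{A}})} & {F_*(0_{\mathcal{A}}) + K}
	\arrow["{!_K}", from=1-1, to=1-2]
	\arrow[from=1-1, to=2-1]
	\arrow[from=1-2, to=2-2]
	\arrow["\iota"', from=2-1, to=2-2]
	\arrow["\lrcorner"{anchor=center, pos=0.125, rotate=180}, draw=none, from=2-2, to=1-1]
\end{tikzcd} \]
in which the top arrow $!_K \colon 0_{\mathcal{B}} \rightarrow K$ lies in $\mathcal{B}^2_\omega$ (both endpoints being finitely presented) and $0_{\mathcal{A}}$ is finitely presented in $\mathcal{A}$. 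By the very definition of $\mathcal{G}_{F_*}$ this exhibits $\cod^*(K)$ as an object of $\mathcal{G}_{F_*}$, hence as a finitely presented object of $F_*\downarrow\mathcal{B}$. The same conclusion follows equally from \cref{Ladj of finitary functor}, using the fact recorded at the start of the section that $\cod$ is finitary.

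Finally I would assemble the pieces: $\cod$ is finitary and continuous (established at the start of the section), it admits the left adjoint $\cod^*$ just constructed, and $\cod^*$ preserves finitely presented objects. This is precisely the data of a morphism of locally finitely presentable categories with right part $\cod$ and left part $\cod^*$, so the pair $\cod^* \dashv \cod$ defines such a morphism. I expect no genuine obstacle beyond correctly pinning down the form of $\cod^*$; once the coproduct with $F_*(0_{\mathcal{A}})$ is in place, both the adjunction and the finiteness condition are routine.
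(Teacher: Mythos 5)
Your proposal is correct and arrives at exactly the paper's construction: the same formula $\cod^*(C) = \big(F_*(0_{\mathcal A}) \to F_*(0_{\mathcal A}) + C\big)$, the same verification that the $\mathcal A$-component of a morphism out of $\cod^*(C)$ is forced by initiality so that only a map $C \to \cod f$ remains free, and the same pushout square placing $\cod^*(K)$ in $\mathcal G_{F_*}$. The only difference is organizational: the paper obtains $\cod^*$ as the composite of the left adjoint $!_{(-)}$ to $\cod : \mathcal B^2 \to \mathcal B$ with a reflection $\iota_F^* : \mathcal B^2 \to F_*\downarrow\mathcal B$ given by pushout along the unit, whereas you write down the composite directly; the substance is identical.
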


\begin{proof}
Observe first that $\cod$ factorizes through the inclusion into the arrow category
\[\begin{tikzcd}
	{F_*\downarrow\mathcal{B}} && {\mathcal{B}} \\
	& {\mathcal{B}^2}
	\arrow[""{name=0, anchor=center, inner sep=0}, "\cod", from=1-1, to=1-3]
	\arrow["{\iota_F}"', from=1-1, to=2-2]
	\arrow["\cod"', from=2-2, to=1-3]
	\arrow["\simeq"{description}, Rightarrow, draw=none, from=0, to=2-2]
\end{tikzcd}\]
Then $ \cod : \mathcal{B}^2 \rightarrow \mathcal{B}$ has as left adjoint the functor $ !_{(-)}$ sending $ B$ on the initial map $ !_B : 0 \rightarrow B$, and any $ f: B_1 \rightarrow B_2$ on the induced square
\[\begin{tikzcd}
	0 & {B_1} \\
	0 & {B_2}
	\arrow[Rightarrow, no head, from=1-1, to=2-1]
	\arrow["{!_{B_1}}", from=1-1, to=1-2]
	\arrow["f", from=1-2, to=2-2]
	\arrow["{!_{B_2}}"', from=2-1, to=2-2]
\end{tikzcd}\]
Indeed, any square $ !_B \rightarrow f$ for $ f : \dom (f) \rightarrow \cod(f)$ as below
\[\begin{tikzcd}
	0 & {\dom (f)} \\
	B & {\cod(f)}
	\arrow["{!_B}"', from=1-1, to=2-1]
	\arrow["{!_{B_1}}", from=1-1, to=1-2]
	\arrow["f", from=1-2, to=2-2]
	\arrow["g"', from=2-1, to=2-2]
\end{tikzcd}\]
is uniquely defined by a choice of $ g : B \rightarrow \cod(f)$, which proves that $ !_{(-)} \dashv \cod $. \\ 

Now, we construct a left adjoint $ \iota_F^*$ of $ \iota_F: F_*\downarrow \mathcal{B} \hookrightarrow \mathcal{B}^2$ as follows: for any arrow $ f : B_1 \rightarrow B_2$ takes the pushout along the unit of $ B_1$
\[\begin{tikzcd}
	{B_1} & {B_2} \\
	{F_*F^*(B_1)} & {\eta_{B_1*}B_2}
	\arrow["{\eta_{B_1}}"', from=1-1, to=2-1]
	\arrow["f", from=1-1, to=1-2]
	\arrow[from=1-2, to=2-2]
	\arrow["{\eta_{B_1*}f}"', from=2-1, to=2-2]
	\arrow["\lrcorner"{anchor=center, pos=0.125, rotate=180}, draw=none, from=2-2, to=1-1]
\end{tikzcd}\]
and for a morphism of arrows $ (u,v) : f \rightarrow f'$, take as $ \iota^*_F(u,v)$ the vertical arrow of the front square in the diagram below
\[\begin{tikzcd}
	{B_1} && {B_2} \\
	& {F_*F^*(B_1)} && {\eta_{B_1*}B_2} \\
	{B_1'} && {B_2'} \\
	& {F_*F^*(B'_1)} && {\eta_{B'_1*}B'_2}
	\arrow["{\eta_{B_1}}"{description}, from=1-1, to=2-2]
	\arrow[""{name=0, anchor=center, inner sep=0}, "f", from=1-1, to=1-3]
	\arrow[from=1-3, to=2-4]
	\arrow["u"', from=1-1, to=3-1]
	\arrow["v"{description, pos=0.7}, from=1-3, to=3-3]
	\arrow["{\eta_{B_1*}f}"{description, pos=0.3}, from=2-2, to=2-4, crossing over]
	\arrow["{f'}"'{pos=0.7}, from=3-1, to=3-3]
	\arrow["{\eta_{B'_1*}f'}"', from=4-2, to=4-4]
	\arrow["{\eta_{B'_1}}"', from=3-1, to=4-2, crossing over]
	\arrow[from=3-3, to=4-4]
	\arrow["{F_*F^*(u)}"{description, pos=0.3}, from=2-2, to=4-2, crossing over]
	\arrow["{\langle  \eta_{B'_1*}f'F_*F^*(u), f'_*\eta_{B'_1}v \rangle}", dashed, from=2-4, to=4-4]
	\arrow["\lrcorner"{anchor=center, pos=0.125, rotate=180}, draw=none, from=2-4, to=0]
\end{tikzcd}\]
Then for any morphism of arrow in $\mathcal{B}^2$ as below
\[\begin{tikzcd}
	{B_1} & {B_2} \\
	{F_*(A)} & B
	\arrow["f", from=1-1, to=1-2]
	\arrow["g"', from=2-1, to=2-2]
	\arrow["u"', from=1-1, to=2-1]
	\arrow["v", from=1-2, to=2-2]
\end{tikzcd}\]
the adjunction $ F^*\dashv F_*$ returns a unique morphism $ u : F^*(B_1) \rightarrow A$ in $\mathcal{A}$ such that $u$ factorizes uniquely as $ F_*(\overline{ u}) \eta_{B_1}$, and then this induces uniquely a morphism of arrows as seen below in the front square
\[\begin{tikzcd}
	{B_1} && {B_2} \\
	& {F_*F^*(B_1)} && {\eta_{B_1*}B_2} \\
	\\
	& {F_*(A)} && B
	\arrow["{\eta_{B_1}}"{description}, from=1-1, to=2-2]
	\arrow[""{name=0, anchor=center, inner sep=0}, "f", from=1-1, to=1-3]
	\arrow[from=1-3, to=2-4]
	\arrow["g"', from=4-2, to=4-4]
	\arrow["{\overline{u}}"{description}, from=2-2, to=4-2]
	\arrow["{\langle  g\overline{u} , v \rangle}", dashed, from=2-4, to=4-4]
	\arrow["u"', curve={height=18pt}, from=1-1, to=4-2]
	\arrow["v"{description}, curve={height=12pt}, from=1-3, to=4-4]
	\arrow["\lrcorner"{anchor=center, pos=0.125, rotate=180}, draw=none, from=2-4, to=0]	
	\arrow["{\eta_{B_1*}f}"{description, pos=0.3}, from=2-2, to=2-4, crossing over]
\end{tikzcd}\]
Now we want to compose those functors to get a left adjoint $ \cod^*: \mathcal{B} \rightarrow \mathcal{B}^2$. First observe that $ F^*(0) \simeq 0$ as $ F^*$ preserves colimits; then the unit $ \eta_0 : 0 \rightarrow F_*F^*(0)$ coincides with the initial map $ !_{F_*(0)} : 0 \rightarrow F_*(0)$. Now take a $B$ in $\mathcal{B}$ to the lower arrow $\eta_0*!_B$ in the following pushout
\[\begin{tikzcd}
	0 & B \\
	{F_*(0)} & {F_*(0)+B}
	\arrow["{!_B}", from=1-1, to=1-2]
	\arrow["{\eta_0}"', from=1-1, to=2-1]
	\arrow["{\eta_0*!_B}"', from=2-1, to=2-2]
	\arrow[from=1-2, to=2-2]
	\arrow["\lrcorner"{anchor=center, pos=0.125, rotate=180}, draw=none, from=2-2, to=1-1]
\end{tikzcd}\]
This defines a left adjoint to $\cod$: indeed, though $ F_*$ does not preserve initialness, initialness is remembered in some sense in the comma any morphsim $ \eta_{B*}f \rightarrow g$ with $ g : F_*(A) \rightarrow B'$ has is left component forced to be $ F*(!_A)$, so that in the diagram below
\[\begin{tikzcd}
	0 && B \\
	{F_*(0)} && {F_*(0)+B} \\
	& {F_*(A)} && {B'}
	\arrow["{\eta_0*!_B}"{description}, from=2-1, to=2-3]
	\arrow[from=1-3, to=2-3]
	\arrow[""{name=0, anchor=center, inner sep=0}, "{!_B}", from=1-1, to=1-3]
	\arrow["{\eta_0}"', from=1-1, to=2-1]
	\arrow["{F^*(!_A)}"', from=2-1, to=3-2]
	\arrow["v", from=2-3, to=3-4]
	\arrow["g"', from=3-2, to=3-4]
	\arrow["\lrcorner"{anchor=center, pos=0.125, rotate=180}, draw=none, from=2-3, to=0]
\end{tikzcd}\]
we see that such an arrow is uniquely determined by a map $ v!_{B*}\eta_0 $, which itself was in return uniquely determined by $v$. Hence the adjunction. \\

Now, though it is also a consequence of $\cod $ being finitary, it is easy to see directly that this left adjoint preserves finite presentedness with our notion of finitely presented objects in $F_*\downarrow \mathcal{B}$. Indeed if $ K$ is in $\mathcal{B}_\omega$, then not only is the initial map $ !_K : 0 \rightarrow K$ in $\mathcal{B}_\omega$ as 0 always is finitely presented, but as $ F^*(0) = 0 $ is in $\mathcal{A}_\omega$, then $ \cod^*(B) =  \eta_{0*}!_B$ is finitely presented in $F_*\downarrow \mathcal{B}$ from what was proved above. 
\end{proof}

Now we turn to the other part of the factorization:

\begin{proposition}
The functor $ 1_{F_*} : \mathcal{A} \rightarrow F_*\downarrow \mathcal{B}$ has a left adjoint $ 1^*_{F_*}$, which moreover sends finitely presented objects on finitely presented objects, and the pair $ 
1^*_{F_*} \dashv 1_{F_*}$ defines a morphism of locally finitely presentable categories. 
\end{proposition}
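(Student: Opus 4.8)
The plan is to handle this statement by the same division of labour as the preceding proposition on $\cod^*\dashv\cod$: first record that the candidate right adjoint $1_{F_*}$ is finitary and continuous, then produce its left adjoint explicitly, and finally deduce preservation of finitely presented objects abstractly rather than through a direct compactness computation.

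First I would observe that $1_{F_*}$ inherits exactness from $F_*$. Since limits and filtered colimits in $F_*\downarrow\mathcal{B}$ are computed pointwise in the arrow category $\mathcal{B}^2$, and since $F_*$ is both continuous and finitary, for any diagram $(A_i)$ in $\mathcal{A}$ (filtered, in the colimit case) the limit, resp. filtered colimit, of the identity arrows $1_{F_*(A_i)}$ is again an identity arrow, namely $1_{F_*(\lim A_i)}$, resp. $1_{F_*(\colim A_i)}$: indeed $F_*$ commutes with the relevant (co)limit on both domain and codomain, and the identities assemble into an identity. Hence $1_{F_*}$ preserves all limits and all filtered colimits.

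Next I would construct the left adjoint. Unwinding the comma, a morphism $f\to 1_{F_*}(A')$ out of an object $f: F_*(A)\to B$ is a pair $(u: A\to A',\, v: B\to F_*(A'))$ subject to $F_*(u)=v\circ f$. Transposing $v$ to $\bar v: F^*(B)\to A'$ along $F^*\dashv F_*$, and using the triangle identities together with naturality of the counit $\epsilon$, this condition becomes exactly $u\circ\epsilon_A=\bar v\circ F^*(f)$; that is, $(u,\bar v): A+F^*(B)\to A'$ coequalises the parallel pair $\iota_A\epsilon_A$ and $\iota_{F^*(B)}F^*(f)$. I would therefore set
\[ 1^*_{F_*}(f)=\coeq\left(F^*F_*(A)\rightrightarrows A+F^*(B)\right), \]
the two legs being $\iota_A\circ\epsilon_A$ and $\iota_{F^*(B)}\circ F^*(f)$, and then check functoriality in $f$ and naturality of the displayed bijection to obtain $1^*_{F_*}\dashv 1_{F_*}$. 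Alternatively, existence of the adjoint is automatic, since $1_{F_*}$ is a continuous accessible functor between locally presentable categories; but the explicit formula matches the style of the rest of the section and is convenient for sanity checks.

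Finally, for the preservation of finitely presented objects I would deliberately avoid arguing from the coequalizer formula, and this is where the only genuine subtlety lies: the domain $F^*F_*(M)$ of the parallel pair need not be finitely presented even when $M$ is, so the quotient is not visibly compact and a direct verification would be awkward. Instead I would invoke \cref{Ladj of finitary functor}: both $\mathcal{A}$ and $F_*\downarrow\mathcal{B}$ are locally finitely presentable (the latter by the main theorem of this section), and $1_{F_*}$ is finitary by the first step, so its left adjoint $1^*_{F_*}$ automatically sends finitely presented objects to finitely presented objects. Combined with the continuity and finitariness of $1_{F_*}$, this exhibits the pair $1^*_{F_*}\dashv 1_{F_*}$ as a morphism of locally finitely presentable categories, as required.
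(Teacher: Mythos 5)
Your argument is correct and shares the paper's overall skeleton (continuity and finitariness of $1_{F_*}$ from the pointwise computation of limits and filtered colimits in the comma, then preservation of finitely presented objects deduced abstractly from \cref{Ladj of finitary functor} rather than by a compactness computation), but it constructs the left adjoint by a genuinely different route. The paper invokes the adjoint functor theorem: it takes a small weakly initial family $J_f$ in the comma $f\downarrow 1_{F_*}$, guaranteed by accessibility of $1_{F_*}$, and defines $1^*_{F_*}(f)$ as a limit over that family, leaving the verification as ``standard calculation''. You instead unwind a morphism $f\to 1_{F_*}(A')$ into a pair $(u,\bar v)$ with $u\,\epsilon_A=\bar v\,F^*(f)$ and obtain the closed formula
\[
1^*_{F_*}(f)=\coeq\bigl(F^*F_*(A)\rightrightarrows A+F^*(B)\bigr),
\]
which is a correct transposition along $F^*\dashv F_*$ and yields the adjunction by the universal property of the coequalizer. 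Your formula is more explicit and self-contained --- it avoids the solution-set machinery entirely and makes the natural bijection visible --- whereas the paper's construction is the one it later exploits when computing $1^*_{F_*}(\cod^*(K))\simeq F^*(K)$ via an initial object of the relevant comma. Your closing remark that the coequalizer presentation does not make finite presentability of $1^*_{F_*}(f)$ visible (since $F^*F_*(A)$ and $F^*(B)$ need not be finitely presented) is accurate, and your fallback to \cref{Ladj of finitary functor} is exactly the paper's own mechanism for that step, so nothing is lost.
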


\begin{proof}
We saw that $  1_{F_*}$ is continuous and accessible by the very computation of limits in the comma. Applying the adjoint functor theorem would ensure the existence of a left adjoint: however we want an explicit description of this left adjoint. From adjoint functor theorem, we can compute its value at a given object as a limit as follows. \\


Being finitary, $ 1_{F_*}$ is accessible, hence satisfies the solution set condition: thus for each $f : F_*(A) \rightarrow B$, the comma category $ f \downarrow 1_{F_*}$ has a small weakly initial family $ J_f$, whose elements will be denoted as 
\[\begin{tikzcd}
	{F_*(A)} & B \\
	{F_*(A_i)} & {F_*(A_i)}
	\arrow[Rightarrow, no head, from=2-1, to=2-2]
	\arrow["{F_*(a_i)}"', from=1-1, to=2-1]
	\arrow["f", from=1-1, to=1-2]
	\arrow["{b_i}", from=1-2, to=2-2]
\end{tikzcd}\]
for each $ i \in J_f$. Then define the value of $ 1^*_{F_*}$ as the limit
\[  1^*_{F_*}(f) = \underset{j \in J_f}{\lim}\; A_i \]
Then it is standard calculation to see that this defines a left adjoint to $ 1_{F_*}$. Moreover, the fact it restricts to finitely presented objects is a consequence of $ 1_{F_*}$ being finitary.   \\
\end{proof}

Uniqueness of adjoints ensures that the composite of those left adjoints coincides up to invertible 2-cell with the left adjoint $ F^*$ as depicted in the factorization below:
\[\begin{tikzcd}
	{\mathcal{A}_\omega} && {\mathcal{B}_\omega} \\
	& {(F_*\downarrow \mathcal{B})_\omega}
	\arrow[""{name=0, anchor=center, inner sep=0}, "{F^*_\omega}"', from=1-3, to=1-1]
	\arrow["{\cod^*_\omega}", from=1-3, to=2-2]
	\arrow["{(1_{F_*})_\omega^*}", from=2-2, to=1-1]
	\arrow["\simeq"{description}, Rightarrow, draw=none, from=0, to=2-2]
\end{tikzcd}\]

However we would like to understand explicitely what makes $1^*_{F_*}$ to return the finitely presented object $F^*(K)$ when applied to $ \cod^*(K) $ for a finite presented $K$, the precedent proof failing to provide a satisfactory description of the process. \\

Recall that we constructed $ \cod^*(K)$ as $ \eta_{0*}!_K: F_*(0) \rightarrow F_*(0) + K$. Then observe that the following square
\[\begin{tikzcd}
	0 & K \\
	{F_*(0)} \\
	{F_*F^*(K)} & {F_*F^*(K)}
	\arrow["{F_*(!_K)}"', from=2-1, to=3-1]
	\arrow["{\eta_0}"', from=1-1, to=2-1]
	\arrow["{!_K}", from=1-1, to=1-2]
	\arrow[Rightarrow, no head, from=3-1, to=3-2]
	\arrow["{\eta_K}", from=1-2, to=3-2]
\end{tikzcd}\]
induces a unique factorization 
\[\begin{tikzcd}
	0 & K \\
	{F_*(0)} & {F_*(0)+K} \\
	{F_*F^*(K)} && {F_*F^*(K)}
	\arrow["{F_*(!_K)}"', from=2-1, to=3-1]
	\arrow["{\eta_0}"', from=1-1, to=2-1]
	\arrow["{!_K}", from=1-1, to=1-2]
	\arrow[Rightarrow, no head, from=3-1, to=3-3]
	\arrow["{\eta_K}", curve={height=-12pt}, from=1-2, to=3-3]
	\arrow[from=1-2, to=2-2]
	\arrow[from=2-1, to=2-2]
	\arrow["\lrcorner"{anchor=center, pos=0.125, rotate=180}, draw=none, from=2-2, to=1-1]
	\arrow["{\langle F_*(!_K), \eta_K \rangle}"{description}, dashed, from=2-2, to=3-3]
\end{tikzcd}\]
Then it is clear that the following square 
\[\begin{tikzcd}
	{F_*(0)} & {F_*(0)+K} \\
	{F_*F^*(K)} & {F_*F^*(K)}
	\arrow["{F_*(!_K)}"', from=1-1, to=2-1]
	\arrow[Rightarrow, no head, from=2-1, to=2-2]
	\arrow["{\cod^*(K)}", from=1-1, to=1-2]
	\arrow["{\langle F_*(!_K), \eta_K \rangle}", dashed, from=1-2, to=2-2]
\end{tikzcd}\]
is an initial object of the comma $ \cod^*(K)\downarrow 1_{F_*}$, and in particular factorizes all the members of the solution set of $1_{F_*} $ at $ \cod^*(F)$: hence the limit above reduces here on 
\[ 1_{F_*}^*(\cod^*(K)) = \underset{ \cod^*(K)\downarrow 1_{F_*}}{\lim} \; A \simeq F^*(K) \]

Observe also that the pseudosection $ \pi_1$ of $ 1_{F_*}$ is part of a morphism of locally finitely presentable categories: in fact, it is immediate to see that it is \emph{right adjoint} to $ \pi_1$, and that its both finitary and continuous. To sum up, the following square 
\[\begin{tikzcd}
	{F_*\downarrow \mathcal{B}} & {\mathcal{B}} \\
	{\mathcal{A}} & {\mathcal{B}}
	\arrow[""{name=0, anchor=center, inner sep=0}, Rightarrow, no head, from=1-2, to=2-2]
	\arrow[""{name=1, anchor=center, inner sep=0}, "F"', from=2-1, to=2-2]
	\arrow["{\pi_1}"', from=1-1, to=2-1]
	\arrow["\cod", from=1-1, to=1-2]
	\arrow["{\lambda_{F_*}}", curve={height=-6pt}, shorten <=4pt, shorten >=4pt, Rightarrow, from=1, to=0]
\end{tikzcd}\]
actually lies inside of $\LFP$. \\

\begin{theorem}
For $ F : \mathcal{A} \rightarrow \mathcal{B}$ in $\LFP$, $F_*\downarrow \mathcal{B}$ together with $ \cod $ and $\pi_1$ is the comma object $ F\downarrow \mathcal{B}$ in $\LFP$. 
\end{theorem}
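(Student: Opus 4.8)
The plan is to verify directly that the square already produced in $\Cat$ satisfies the one- and two-dimensional universal property of a comma object, with all test data now taken inside $\LFP$. The preceding results supply every ingredient: $F_*\downarrow\mathcal{B}$ is locally finitely presentable, both projections $\cod$ and $\pi_1$ are $\LFP$-morphisms, and $\lambda_{F_*}$ is a genuine natural transformation (its component at $f\colon F_*(A)\to B$ being $f$ itself); so the whole square lives in $\LFP$. By construction $(F_*\downarrow\mathcal{B},\cod,\pi_1,\lambda_{F_*})$ is the comma object of the cospan $F_*,\,1_{\mathcal{B}}$ in $\Cat$. It therefore remains only to check that the mediating cells delivered by the $\Cat$-level universal property are themselves $\LFP$-data.

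For the one-dimensional part, I would take an lfp category $X$, two $\LFP$-morphisms $a\colon X\to\mathcal{A}$ and $b\colon X\to\mathcal{B}$, and a natural transformation $\beta\colon F_*a\Rightarrow b$. The comma property in $\Cat$ gives a unique functor $h\colon X\to F_*\downarrow\mathcal{B}$ with $\pi_1 h=a$, $\cod h=b$ and $\lambda_{F_*}h=\beta$, namely $x\mapsto(\beta_x\colon F_*(ax)\to bx)$. The substantive point is to show $h$ is finitary and continuous. Here I would invoke the fact recorded at the start of this section that limits and filtered colimits in $F_*\downarrow\mathcal{B}$ are created by the inclusion $\iota_F$ into $\mathcal{B}^2$ and computed there pointwise, hence jointly created by $\dom$ and $\cod$. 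Thus $h$ preserves limits (resp.\ filtered colimits) as soon as the two composites $\cod\,\iota_F\,h=b$ and $\dom\,\iota_F\,h=F_*\pi_1 h=F_*a$ do. The former does by hypothesis, the latter because $a$ is an $\LFP$-morphism and $F_*$ is finitary and continuous, so $F_*a$ is too. Since a finitary continuous functor between lfp categories automatically has a left adjoint, $h$ is a bona fide $\LFP$-morphism; and its uniqueness is inherited from $\Cat$, where it holds among all functors, a fortiori among $\LFP$-morphisms.

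I expect this finitary-continuity check to be the only real obstacle, and it is exactly where both halves of the hypothesis on $F$ are consumed: continuity of $F_*$ buys continuity of $h$, and finitariness of $F_*$ buys its finitariness. Everything else is formal transport along the $\Cat$-level comma.

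For the two-dimensional part the work is essentially free, because the $2$-cells of $\LFP$ between parallel $\LFP$-morphisms are precisely the natural transformations between them, so the hom-categories of $\LFP$ are full in those of $\Cat$. Given parallel $h,h'\colon X\to F_*\downarrow\mathcal{B}$ in $\LFP$ and $2$-cells $\sigma\colon\pi_1 h\Rightarrow\pi_1 h'$, $\tau\colon\cod h\Rightarrow\cod h'$ satisfying the comma compatibility (the pasting of $\tau$ with $\lambda_{F_*}h$ agreeing with that of $\lambda_{F_*}h'$ with $F_*\sigma$), the $\Cat$-universal property produces a unique natural transformation $\theta\colon h\Rightarrow h'$ with $\pi_1\theta=\sigma$ and $\cod\theta=\tau$; this $\theta$ is automatically an $\LFP$ $2$-cell and is unique already in $\Cat$. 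Assembling the two items exhibits $(F_*\downarrow\mathcal{B},\cod,\pi_1,\lambda_{F_*})$ as the comma object $F\downarrow\mathcal{B}$ in $\LFP$.
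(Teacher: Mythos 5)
Your proposal is correct and follows essentially the same route as the paper: both arguments take the mediating functor supplied by the comma universal property in $\Cat$ and then verify it is continuous and finitary by appealing to the pointwise computation of limits and filtered colimits in $F_*\downarrow\mathcal{B}$ inside $\mathcal{B}^2$, with the domain leg handled via continuity and finitariness of $F_*$. Your treatment of the two-dimensional universal property via fullness of the hom-categories of $\LFP$ in those of $\Cat$ is a slightly more explicit rendering of what the paper compresses into its final sentence, but it is not a different argument.
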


\begin{proof}
Any other 2-cell in $\LFP$ 
\[\begin{tikzcd}
	C & {\mathcal{B}} \\
	{\mathcal{A}} & {\mathcal{B}}
	\arrow[""{name=0, anchor=center, inner sep=0}, Rightarrow, no head, from=1-2, to=2-2]
	\arrow[""{name=1, anchor=center, inner sep=0}, "F"', from=2-1, to=2-2]
	\arrow["G"', from=1-1, to=2-1]
	\arrow["H", from=1-1, to=1-2]
	\arrow["\lambda", curve={height=-6pt}, shorten <=4pt, shorten >=4pt, Rightarrow, from=1, to=0]
\end{tikzcd}\]
defines from its underlying 2-cell in $\Cat$ a unique arrow $ S_\lambda$ as below
\[\begin{tikzcd}
	C \\
	& {F_*\downarrow \mathcal{B}} & {\mathcal{B}} \\
	& {\mathcal{A}} & {\mathcal{B}}
	\arrow[""{name=0, anchor=center, inner sep=0}, Rightarrow, no head, from=2-3, to=3-3]
	\arrow[""{name=1, anchor=center, inner sep=0}, "{F_*}"', from=3-2, to=3-3]
	\arrow[""{name=2, anchor=center, inner sep=0}, "{G_*}"', curve={height=18pt}, from=1-1, to=3-2]
	\arrow[""{name=3, anchor=center, inner sep=0}, "{H_*}", curve={height=-18pt}, from=1-1, to=2-3]
	\arrow["{\pi_1}"{description}, from=2-2, to=3-2]
	\arrow["\cod"{description}, from=2-2, to=2-3]
	\arrow["{S_\lambda}"{description}, dashed, from=1-1, to=2-2]
	\arrow["\simeq"{description}, Rightarrow, draw=none, from=2-2, to=2]
	\arrow["\simeq"{description}, Rightarrow, draw=none, from=2-2, to=3]
	\arrow["{\lambda_{F_*}}", curve={height=-6pt}, shorten <=4pt, shorten >=4pt, Rightarrow, from=1, to=0]
\end{tikzcd}\]
sending any $ C$ in $\mathcal{C}$ to the corresponding component of the natural transformation $ \lambda$
\[\begin{tikzcd}
	{F_*G_*(C)} & {H_*(C)}
	\arrow["{\lambda_C}", from=1-1, to=1-2]
\end{tikzcd}\]
But from $ F_*$, $ G_*$ and $H_*$ all are continuous and finitary, and from the computation of limits and filtered colimits in the comma, $ S_\lambda$ is itself continuous and finitary. Moreover, from uniqueness of adjoints, any two 2-cells inducing the same functor in $\LFP$ induce in fact a same factorization in $\Cat$ and must then be equivalent. 

\end{proof}

\printbibliography

\end{document}